\newcommand{\mymail}[1]{\href{mailto:#1}{\texttt{#1}}}
\newcommand{\setauthA}[1]{\def\authA{#1}}
\newcommand{\setauthB}[1]{\def\authB{#1}}
\def\printA{\begin{tabular}{l} \authA \end{tabular}}
\def\printB{\begin{tabular}{l} \authB \end{tabular}}
\newcommand{\makemytitle}[1]{\begin{center}{\textsf{\LARGE #1}}
  \end{center}
}
\providecommand{\mc}[1]{\mathcal#1}
\providecommand{\mc}[1]{\mathcal#1}
\newcommand{\R}{{\mathbb R}}
\DeclareMathOperator{\E}{\mathbf{E}}
\DeclareMathOperator{\p}{\mathbf{P}}
\DeclareMathOperator{\tr}{tr}
\providecommand{\T}{\top} 
\DeclareMathOperator*{\argmin}{argmin}
\DeclareMathOperator*{\argmax}{argmax}
\providecommand{\wt}[1]{\widetilde{#1}}
\providecommand{\wh}[1]{\widehat{#1}}
\providecommand{\norm}[1]{\left \lVert#1 \right \rVert}
\providecommand{\nnorm}[1]{ \lVert#1 \rVert}
\newcommand{\scp}[2]{\left\langle#1, #2\right\rangle}
\newcommand{\nscp}[2]{\langle#1, #2\rangle}
\newcommand{\blanco}[1]{  }
\newcommand{\deriv}[3]{%
\ifthenelse{#1 = 1}{\frac{d\,#2}{d\,#3}}{\frac{d^{{#1}} #2}{d{#3}^{{#1}}}}
}
\newcommand{\partials}[3]{%
\ifthenelse{#1 = 1}{\frac{\partial\,#2}{\partial\,#3}}{\frac{\partial^{#1}
    #2}{\partial#3^{#1}}}
} 
\def\su{\sum_{i=1}^n}
\def \coloneq{\mathrel{\mathop:}=}
\def \invcoloneq{=\mathrel{\mathop:}}
\def \eps{\varepsilon}
\newtheorem{theo}{Theorem}
\newtheorem{propo}{Theorem}
\newtheorem{definitio}{Theorem}
\newtheorem{lemmachen}{Theorem}
\newtheorem{corollary}{Theorem}
\newtheorem{defn}[definitio]{Definition}
\newtheorem{prop}[propo]{Proposition}
\newtheorem{corro}[corollary]{Corollary}
 \newtheorem{lemma}[lemmachen]{Lemma}
\def\R{\mathbb{R}}
\def\tr{\mathrm{tr}}
\DeclareMathOperator{\pr}{\texttt{P}}
\DeclareMathOperator{\pre}{\emph{\texttt{P}}}
\def\x{\mathbf{x}}
\def\y{\mathbf{y}}
\def\eps{\boldsymbol{\epsilon}}
\def\epss{\varepsilon}
\newcommand\footnoteref[1]{\protected@xdef\@thefnmark{\ref{#1}}\@footnotemark}
\begin{document}
\thispagestyle{firststyle}

\makemytitle{{\bfseries {Linear Regression with Sparsely Permuted Data
    }}}
\vskip 3.5ex
{\large\begin{center}
\printA
\printB
\end{center}}

\vskip 3.5ex

\begin{abstract} In regression analysis of multivariate data, it is tacitly assumed that response and predictor variables in each observed
response-predictor pair correspond to the same entity or unit. In this paper, we consider the situation of ``permuted data'' in which this
basic correspondence has been lost. Several recent papers have considered this situation without further assumptions on the underlying permutation. In applications, the latter is often to known to have additional structure that can be leveraged. Specifically, we herein consider the common scenario of
"sparsely permuted data" in which only a small fraction of the data is affected by a mismatch between response and predictors. However, an adverse
effect already observed for sparsely permuted data is that the least squares estimator as well as other estimators not accounting for such partial mismatch are inconsistent. One approach studied in detail herein is to treat permuted data as outliers which motivates the use of robust regression formulations to estimate the regression parameter. The resulting estimate can subsequently be used to recover the permutation. A notable benefit of the proposed approach is its computational simplicity given the general lack of procedures for the above problem that are both statistically sound and computationally appealing. 
\end{abstract}

\section{Introduction}\label{sec:intro}
A largely unquestioned assumption in regression analysis with response-predictor pairs $\{ (y_i, \x_i) \}_{i = 1}^n$ is that each $y_i$ corresponds to the same statistical unit
as $\x_i$. In this paper, we consider the situation where the identifiers of the predictors (or equivalently those of the responses) are subject to an unknown permutation so that
correspondence between $y_i$ and $\x_i$ may not be be taken for granted. We refer to this situation as ``permuted data'' respectively ``sparsely permuted data'' when the permutation only affects
a small fraction of all response-predictor pairs. Restoring the original correspondence between
responses and predictors may not be achievable in practice for both computational and statistical reasons, but may also not be required for consistent estimation of regression parameters. Conversely, if the primary interest concerns recovering the permutation itself, which is the case in entity resolution, the regression model can facilitate that task.   


\begin{figure}
  \begin{minipage}{0.7\textwidth}
    \begin{tabular}{cc}
       \hskip-1ex\includegraphics[width = 0.5\textwidth]{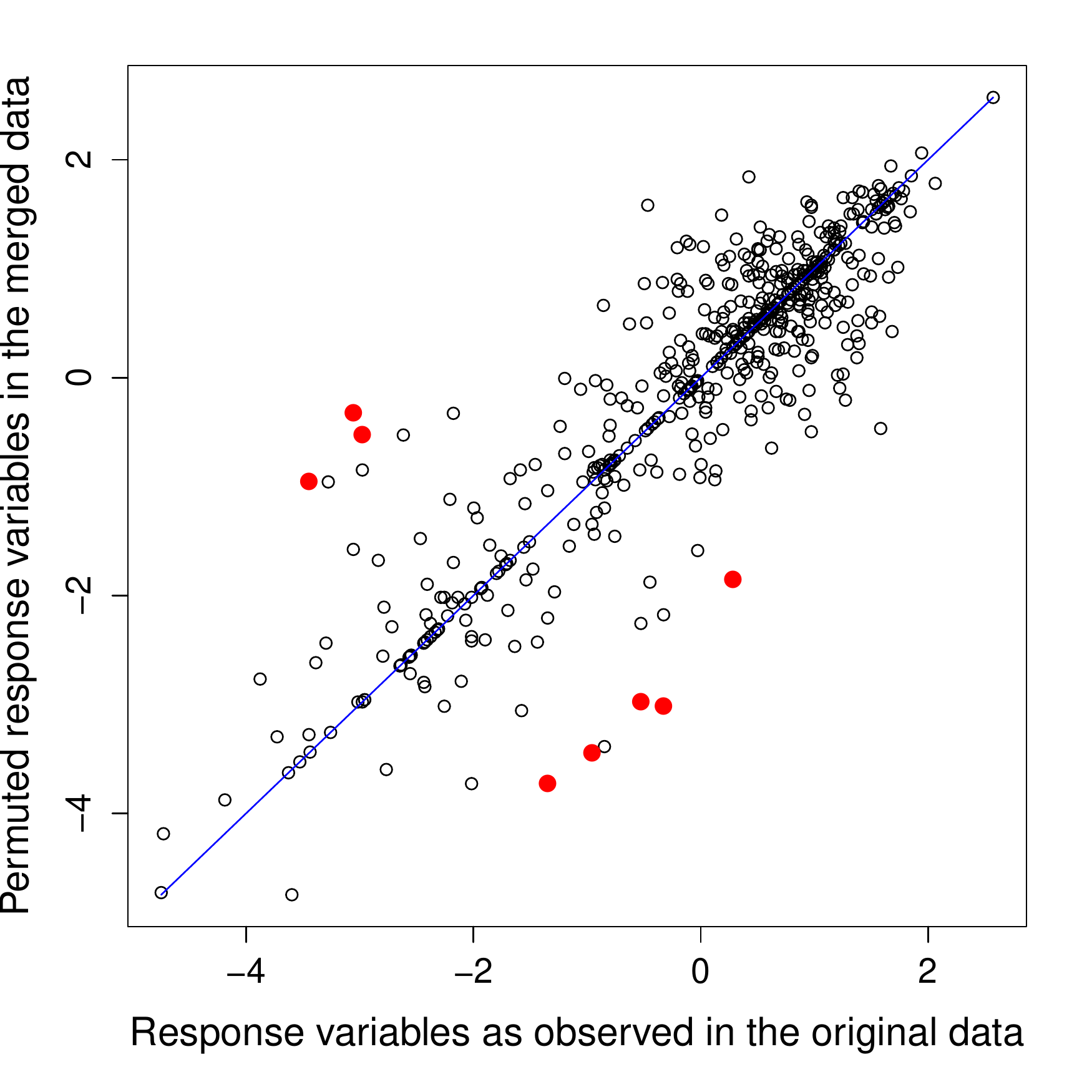} &
       \hskip-3ex\includegraphics[width = 0.5\textwidth]{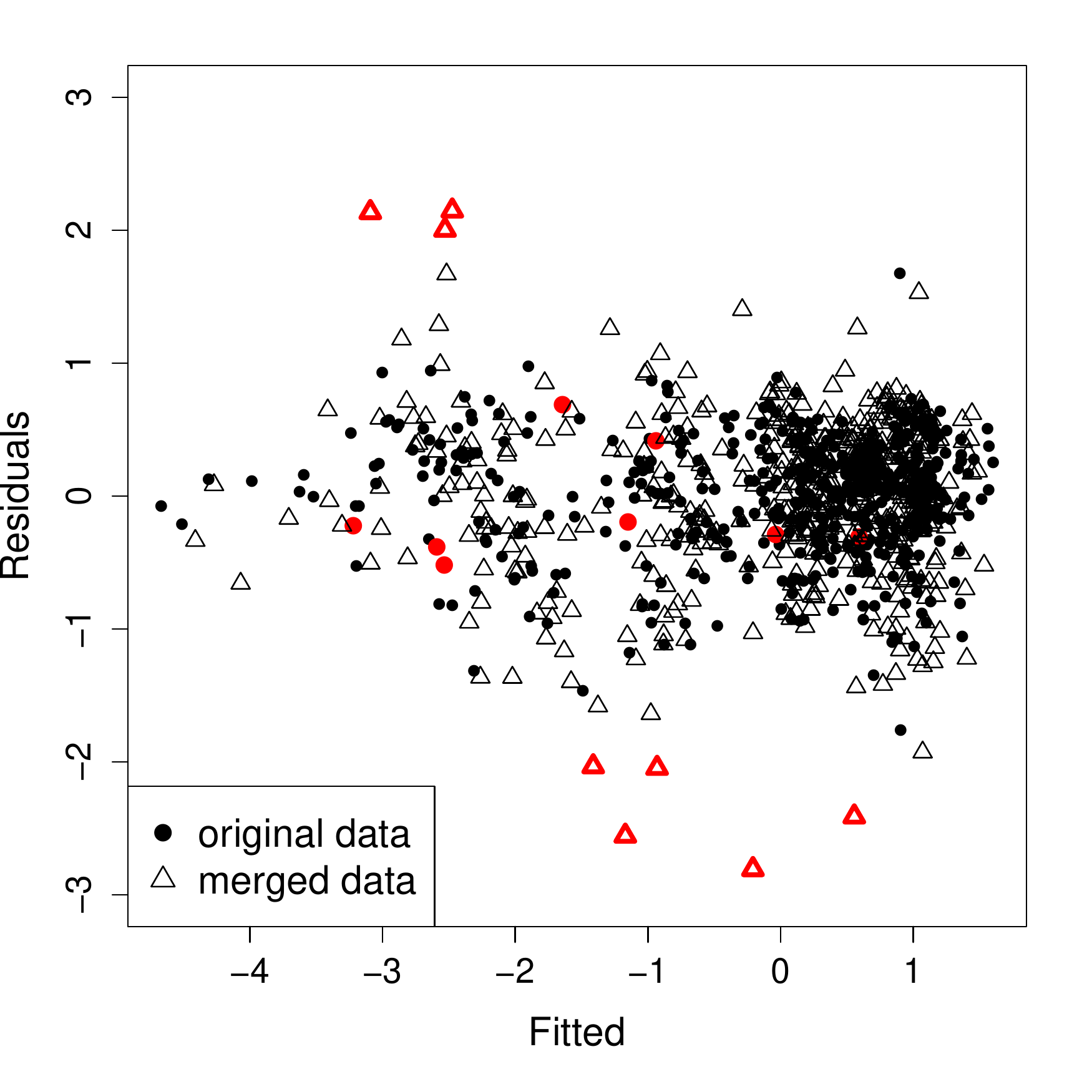}
     \end{tabular}
   \end{minipage}
   \begin{minipage}{0.28\textwidth}
$\begin{array}{cc}
   {\small \y \;\text{(original)}} & {\small \y \;\text{(merged)}} \\
   {\small \mbox{$\vdots$}}   &  {\small \mbox{$\vdots$}}  \\
   {\small \mbox{$-0.96$}}       &   {\small \mbox{$-3.45$}}                        \\
   {\small \mbox{$-3.45$}} & {\small \mbox{$-0.96$}}   \\
  {\small \mbox{$0.28$}} & {\small \mbox{$-1.86$}}   \\
 {\small \mbox{$-3.06$}} & {\small \mbox{$-0.33$}}   \\
 {\small \mbox{$-0.33$}} & {\small \mbox{$-3.02$}}   \\
 {\small \mbox{$-2.98$}} & {\small \mbox{$-0.53$}}   \\
 {\small \mbox{$-0.53$}} & {\small \mbox{$-2.98$}}   \\
   {\small \mbox{$-1.35$}} & {\small \mbox{$-3.73$}}   \\
            {\small \mbox{$\vdots$}}   &  {\small \mbox{$\vdots$}}  \\
     \end{array}$
   \end{minipage}



     
   \caption{Illustration of the effect of mismatchings in the response variable on linear regression with merged data.
     For this analysis, a subset of the El Ni\~no data set (cf.~$\S$\ref{sub:real}) was artificially broken  into two parts which were subsequently merged based on a non-unique identifier. Left: Original response vs.~response observed after data merging. Middle: Residuals vs.~fitted values in linear regression when using the original (dots) and merged data set (triangles), respectively. The circles and triangles colored in red correspond to the excerpt of the response variable depicted on the right hand side.}\label{fig:motivation_elnino}
\end{figure}

\subsection{Background and Motivation}
Large organizations that own or have access to multiple data sources regularly rely on data integration for conducting large-scale scientific projects. Available datasets are gathered or produced at different points of time and independently of one another. The main reason for combining datasets is that no single dataset contains all variables of interest that pertain the research questions. Data collected from a single source are often limited and do not have all the variables needed for statistical analysis. Limited budget, time and resources prevent each particular agency from collecting a comprehensive dataset. In most situations, however, relevant datasets from other sources are already available. For example, if a demographic survey does not have all the relevant variables for a particular research project, other existing surveys or administrative data can be used to include those missing variables.

Record linkage, or entity resolution, is an essential task in data integration. The task is to identify  which records in different datasets belong to the same entity. In this context, the term "entity" is to be understood in a broad sense, and may refer to customers, tax payers or patients, etc. Record linkage has a long history of uses in  large enterprises, government agencies and health care institutions. A business register consisting of names, addresses, and other identifying information such as total financial receipts might be constructed from tax and employment data bases; a survey of retail establishments or agricultural establishments might combine results from an area frame and a list frame. In this case, units from the area frame would need to be identified in the list frame \cite{Winkler14}.

Generally, due to the lack of unique entity identifiers in data files, record matching is based on methods of \emph{probabilistic record linkage} \cite{Fellegi69} that score similarity  between common quasi-identifiers. For example, when the data files contain health information of individuals, the quasi-identifiers can be first and last names, addresses, dates of birth.  Spelling errors, typographical variation and missing values in records inhibit exact matching and lead to matching errors: mismatches and missed-matches.  Even low matching error rates can lead to selection bias and pervasive outliers, particularly when the record linkage process includes or excludes particular types of entities and their attributes \cite{Bohensky15}. Selection bias and outliers contaminate subsequent statistical analysis. To reduce bias, it is of interest to develop statistical methods that can alleviate the adverse effects of matching errors.    

In this paper, we focus on the problem of linear regression with sparsely permuted data as it arises frequently when linking survey records to external data. The canonical example is regression estimation of a study variable $y$ on auxiliary variables $\x$ that reside in another data source such as master data or administrative records. The matching error, determined by the fraction of mismatches and missed-matches, is expected to be  only a small fraction of the sample size $n$ \cite{Winkler14}. The paper \cite{Scheuren97} considers two such situations. To model the energy economy properly, an economist might need company-specific data on the fuel and feedstocks used by companies that are only available from Agency A and corresponding microdata on the goods produced for companies that is only available from Agency B. To model the health of individuals in society, a demographer or health science policy worker might need individual-specific information on those receiving social benefits from Agencies B1, B2, and B3, corresponding income information from Agency I, and information on health services from Agencies H1 and H2. 


Regression with permuted data arises in other applications as well. The papers \cite{Pananjady2016, Abid2017} provide extensive lists including multi-target tracking in radar systems \cite{Poore06} and pose and correspondence estimation in computer vision \cite{David04}.

\subsection{Problem statement}

\begin{figure}
  \begin{center}
\begin{tabular}{cc}
$\begin{array}{c}\left[ \begin{array}{c}
(y_1, \x_1) \\
 \vdots \\
(y_n, \x_n)
                        \end{array} \right] \\
   \\
(\y, X)
 \end{array}

  \qquad \qquad$  &   $\begin{array}{c} \left[ \begin{array}{c}
(y_1, \x_{\varphi(1)}) \\
 \vdots \\
(y_n, \x_{\varphi(n)})
\end{array} \right]  \\
                     \\
(\y, \Pi^* X)
\end{array} \quad \begin{array}{c} \left[ \begin{array}{c}
(y_1, \x_{\varphi(1)}) \\
 \vdots \\
(y_n, \x_{\varphi(n)})
\end{array} \right]  \\
                     \\
(\Pi^{*\T} \y, X)
\end{array}$ \\
 observed data  $\qquad \qquad$ &  data with true correspondence
\end{tabular}
\end{center}
\vspace*{-0.2in}
\caption{Regression with lost correspondence between response and predictors.}\label{fig:permutationmodel}
\end{figure}

To setup the problem more concretely, suppose the data $(y_1,\x_1),\ldots, (y_n,\x_n)$ is obtained from matching two data files $A$ and $B$, where for each $i$, $y_i\in\R$ resides in file $A$ and $\x_i\in\R^d$ resides in file $B$. Since the process of record linkage is not error-free, some $\x_i$ may have been paired up with a non-correspondent $y_i$. If the number of such mismatches is known to be less than or equal to $k\le n$, then there is an unknown permutation $\varphi$ on $[n] \coloneq \{1,\ldots,n\}$ so that: $\varphi$ moves at most $k$ of the indices, and $(y_1,\x_{\varphi(1)}),\ldots,(y_n,\x_{\varphi(n)}) $ are independent realizations from the classical linear regression model
\begin{equation}\label{eq:lr}
y=\x^\top\beta^* +\epsilon, \: \text{where $\epsilon\sim N(0,\sigma^2)$ and $\x\perp\!\!\!\perp\epsilon$.}
\end{equation}
Let $\Pi^*$ and $\Pi^{*\T}$ be the matrix representations of $\varphi$ and its inverse, respectively, and let $X^\top=\begin{pmatrix} \x_1&\cdots&\x_d\end{pmatrix}$. Eq.~\eqref{eq:lr} readily implies that 
\begin{align}\label{eq:lrwp}
\begin{split}  
   \y &=\Pi^*X\beta^* +\boldsymbol{\epsilon},\: \text{where $\y=(y_1,\ldots,y_n)^\top$ and $\eps=(\epsilon_1,\ldots,\epsilon_n)^\top$} \\
   \Leftrightarrow \; \Pi^{*\T} \y &= X\beta^* + \Pi^{*\T} \boldsymbol{\epsilon}.
\end{split}
\end{align}
For simplicity, since the distribution of $\Pi^{*\T} \boldsymbol{\epsilon}$ and $\eps$ is the same, we do not distinguish between
$\Pi^{*\T} \boldsymbol{\epsilon}$ and $\eps$ in the sequel and write $\eps$ in place of $\Pi^{*\T} \boldsymbol{\epsilon}$.

The main question to be studied here is how the parameters $\Pi^*$ and $\beta^*$ can be accurately and efficiently estimated from the partially mismatched pairs $(y_i,\x_i)$. Efficiency here refers to computational complexity of the procedure that outputs the desired estimates. 
 Let us consider the number of mismatches which can be expressed as $d_H(\Pi^*,I_n)\coloneq|\{ i:\: \Pi_{ii}^*=0\}|$, the Hamming distance between $\Pi^*$ and the identity matrix $I_n$. We can naturally estimate $\Pi^*$ and $\beta^*$  by a least-squares approach as:
  \begin{align}\label{eq:lsq}
\begin{split}
(\widehat{\Pi},\wh{\beta})=\argmin_{\Pi,\beta} \:& \: \|\Pi X\beta-\y\|_2^2\\
\text{subject to}\: &\: d_H(\Pi,I_n)\le k,
\end{split}
\end{align}
where $\Pi$ and $\beta$ run over $\mathcal{P}_n$, the set of all permutation matrices in $\R^{n\times n}$, and $\R^d$, respectively. A permutation respectively its matrix representation $\Pi$ is said to be $k$-sparse if $d_H(\Pi, I_n)\le k$. We note that \cite{Pananjady2016} have shown that unless $d=1$ the optimization problem \eqref{eq:lsq} is NP-hard for $k=n$. The same is true for any $k$ that is defined as a fraction of $n$.
\vskip2ex
\noindent \textbf{Notation.} We here gather some notation frequently used in the present paper. For a positive integer $m$, $I_m$ denotes the
$m \times m$ identity matrix, and $\mathbb{S}^{m-1}$ denotes the unit sphere in $\R^m$. We write $|S|$ for the cardinality of a set $S$. The complement
of $S$ with respect to a given base set depending on the context is denoted by $S^c$. For a matrix $A$, $\nnorm{A}_2 = \sigma_{\max}(A)$ denotes its spectral norm respectively maximum singular value, and $\text{range}(A)$ denotes the column space of $A$. For an index set $I \subseteq [m] \coloneq \{1,\ldots,m\}$ and $v \in \R^m$, $v_{I}$ denotes the subvector of $v$ corresponding to $I$. We write 
$a \vee b = \max\{a,b\}$. Positive constants are denoted by $C$, $c$, $c_1$ etc. We make use of the usual Big-O notation in terms of $O$, $o$, $\Omega$ and $\Theta$. 

\subsection{Prior work}
Linear regression with linked data files is a common scenario in which traditional methods of statistical data analyses are error-prone. Early work in \cite{Neter65} recognizes the adverse effect of matching error on the regression analysis of linked datasets. They show that as a consequence of matching error, the ordinary least squares estimator $\wh{\beta}^{\text{ols}}$ is generally not an unbiased estimator of $\beta^*$. In the paper \cite{Neter65}, the process of matching  
is regarded as random. Letting $z_i$ denote the response that is actually in correspondence to $\x_i$, and letting $y_i$ denote
the response that has been matched to $\x_i$, the model in \cite{Neter65} assumes that for $i=1,\ldots,n$,  
 \begin{equation}\label{eq:yz}
 y_i=\begin{cases}
 	z_i&\text{with probability $q_{ii}$}\\
 	z_j&\text{with probability $q_{ij}, \; i \neq j$}.
 \end{cases}
 \end{equation}   
 Assuming that these probabilities can be estimated \cite{Neter65, Scheuren93, Scheuren97, Lahiri05, Kim12} focus on  constructing an estimator that is unbiased or less biased than $\wh{\beta}^{\text{ols}}$. In fact, in \cite{Lahiri05} the matrix $Q=(q_{ij})$ is used to construct an unbiased estimator of $\beta^*$. The papers \cite{Hof12, Gutman13, Tancredi15} propose to estimate $\beta^*$ using a Bayesian procedure. The main shortcoming of these approaches are that they rely on the assumption that the doubly stochastic matrix $Q$ is known or can be accurately estimated from the data. In addition,  while achieving reduction in bias, the proposed estimators may still be inconsistent and may have large mean squared errors. 

The classical papers \cite{DeGroot1971, DeGroot1976, DeGroot1980} and the later note \cite{Wu1998} consider the situation of permuted data under the term ``broken sample''. A broken sample is a sample of $(y_i, \x_i)$-pairs that up to some permutation of the $\{\x_i\}$ (or equivalently the $\{ y_i \}$) are generated from their joint distribution. In other words, each component of $(y_i, \x_i)$ is observed separately (as if it were generated from its marginal distribution), with possibly different orders for each component. Assuming that the $\{(y_i, \x_i)\}$ are generated from a bi-variate normal distribution up to a permutation, these papers discuss recovering the permutation or estimating the correlation parameter of the underlying bivariate normal distribution. In \cite{Bai05, Chan2001}, the authors discuss whether that parameter can be consistently estimated from a broken sample.

In recent years, computer science and engineering have witnessed a surge of interest in regression analysis of permuted data resulting from problems in multi-target tracking,  statistical seriation \cite{Flammarion16}, and unlabeled compressed sensing \cite{Unnikrishnan2015}, to mention just a few. The papers \cite{Unnikrishnan2015, Pananjady2016, Pananjady2017} are particularly important as they provide rigorous results on fundamental questions associated with the problem. We shall refer to some of these results in the subsequent sections in more detail. While the paper \cite{Collier2016} is not concerned with a regression setting, it provides a detailed analysis of the problem of finding correct matches between two sets of objects in the presence of noise, which bears some relation to the problem discussed in $\S$\ref{subsec:permutationrecovery} below.

\subsection{Summary of Contributions}
The analysis in this paper concerns estimators of $\Pi^*$ and $\beta^*$ given model \eqref{eq:lr} under the additional assumption 
$\x_i \overset{\text{i.i.d.}}{\sim} N(0, I_d)$, $i \in [n]$, as also assumed in the recent work \cite{Pananjady2016}. It is not hard to extend the results to the case where $\x_i \overset{\text{i.i.d.}}{\sim} N(0, \Sigma)$, $i \in [n]$, where $\Sigma$ is a $d \times d$ symmetric positive definite matrix.  

We first provide a negative result concerning the least squares approach \eqref{eq:lsq} if $k = n$, i.e., if no constraint is imposed on the permutation $\Pi$. It is shown that optimizing over all possible choices of $\Pi$ leads to overfitting in the sense that if $\beta^* = 0$, one still has  $\nnorm{\wh{\beta}}_2^2 = \Omega(1)$ with high probability. This result complements another one of a similar spirit in the recent paper \cite{Abid2017} who show that for $d = 1$, the least squares estimator \eqref{eq:lsq} converges almost surely to a limit different from $\beta^*$. Our result also aligns well with a minimax lower bound in \cite{Pananjady2017}. 

Altogether, these negative results provide additional justification to consider the regime of sparse permutations with $k \ll n$.
We demonstrate a bound on the estimation error $\nnorm{\wh{\beta} - \beta^*}_2$ with $\wh{\beta}$ as in \eqref{eq:lsq}. Specifically, the bound implies that the error vanishes as both $d/n$ and $k \log(n/k)/n \rightarrow 0$. In view of the fact that the optimization problem \eqref{eq:lsq} is not computationally tractable, we consider a convex relaxation that takes the form of a robust regression estimation problem as it has been considered before in different contexts \cite{Nguyen2013, She2012, FoygelMackey}, with the permuted observations here being in correspondence to gross errors. The robust regression formulation can be reduced to a specific sparse regression problem in an underdetermined setting with $n - d$ samples and $n$ parameters, one for each observation in a given sample that is affected by partial mismatches between the $\{\x_i\}_{i = 1}^n$ and the $\{ y_i\}_{i = 1}^n$. Our analysis of the robust regression problem applies generically beyond the specific setting of sparsely permuted data. Prior works \cite{Nguyen2013, FoygelMackey} have considered a more general version of the problem in which $\beta^*$ is assumed to be sparse as well, thereby being able to cover the regime $n < d$, but it is not clear whether the results in \cite{Nguyen2013, FoygelMackey} can be specialized to match those of the present paper for the traditional $n > d$ case.

While the robust regression formulation gives rise to an error bound for estimating $\beta^*$ that is comparable to that of the computationally hard formulation \eqref{eq:lsq}, it does not immediately yield an estimator for the permutation $\Pi^*$. We address this issue by adopting a two-stage approach that uses an accurate estimator $\wh{\theta}$ of $\beta^*$ to match the fitted values $\{ \x_i^{\T} \wh{\theta} \}_{i = 1}^n$ to the responses $\{ y_i \}_{i = 1}^n$. This reduces to simple sorting operations, thereby avoiding the computational challenges associated with problem \eqref{eq:lsq}. We show that our approach recovers the underlying permutation under qualitatively the same condition as in \cite{Pananjady2016} which is considerably more stringent in terms of required signal-to-noise ratio than what is required for accurate estimation of the regression coefficients $\beta^*$. We complement our result with a comparable lower bound on the signal-to-noise ratio that is required for permutation recovery even if $\beta^*$ itself is known. 

As already pointed out in \cite{Pananjady2016}, the Hamming ball constraint in \eqref{eq:lsq} does not substantially change the fundamental statistical limits of permutation recovery. However, that constraint does help in that it gives rise to a computationally efficient estimator of $\Pi^*$, whereas the statistical achievability result in \cite{Pananjady2016} is for the computationally hard estimator \eqref{eq:lsq}.




\begin{figure}
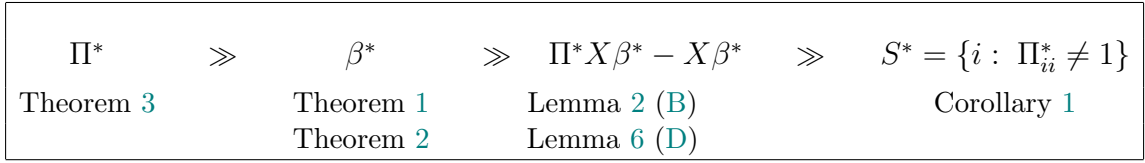

\begin{equation*}
\begin{array}{|ccccccccc|}
\hline
&&&&&&&&\\
\mbox{{\large $\Pi^*$}} & & \gg \quad &  \mbox{{\large $\beta^*$}} & & \gg \quad \mbox{{\large $\Pi^* X \beta^* - X \beta^*$}} & & \gg \quad & 
\mbox{{\large $S^* = \{i:\; \Pi_{ii}^* \neq 1\}$}} \\[1ex]
\mbox{{\normalsize Theorem \ref{theo:permutationrecovery}}} & &  & \mbox{{\normalsize Theorem \ref{theo:constrained_permutation}}}  & & \mbox{{\normalsize Lemma \ref{lem:bound_pi} (\ref{app:theo:constrained_permutation})}}  &    & & \mbox{{\normalsize Corollary \ref{corro:refitting}}}\\
& &  & 
       \mbox{{\normalsize Theorem \ref{theo:relaxation}}} & & \mbox{{\normalsize Lemma \ref{lem:bound_e} (\ref{app:theo:relaxation})}}  &   & & \\
\hline
\end{array}
\end{equation*}
\caption{Schematic overview on targets in the setting of linear regression with sparsely permuted data and pointers to corresponding results in the paper. The directions of $\gg$ indicate that if the object on the left hand side is known or can be accurately estimated, it immediately yields or simplifies estimation of the object on the right hand side.}\label{fig:diagram}
\end{figure}

\section{Main results}\label{sec:mainresults}
                  
This section is structured as follows. We start by showing that the estimator $\wh{\beta}$ in 
\eqref{eq:lsq} lacks statistical consistency if $k = n$. This sets the stage for considering the scenario $k \ll n$. We first focus on recovery of $\beta^*$, treating $\Pi^*$ as a nuisance parameter. We then present and analyze a post-processing step for recovering $\Pi^*$ given an accurate estimate of $\beta^*$.

In the sequel, unless stated otherwise, we assume Gaussian design for the $\{ \x_i \}_{i = 1}^n$:
\begin{equation*}
(\text{\textbf{G}}): \qquad \x_i \overset{\text{i.i.d.}}{\sim} N(0, I_d), \; \; i \in [n].
\end{equation*}
Our results generalize to the case $ \x_i \overset{\text{i.i.d.}}{\sim} N(0, \Sigma), \; i \in [n]$, for a symmetric positive definite matrix
$\Sigma \in \R^{d \times d}$ in a straightforward manner by defining a new regression parameter $\Sigma^{1/2} \beta^*$. An 
estimator of that parameter (as discussed in the sequel) and an estimator of $\Sigma$ can then be combined to form an estimator
of $\beta^*$. Note that estimation of $\Sigma$ is not affected by the presence of an unknown permutation.

\subsection{Least squares estimation of $(\Pi^*, \beta^*)$ without additional constraints:\\ a negative result}

Let us consider problem \eqref{eq:lsq} for $k = n$, i.e., no further constraints are imposed on the solution $(\wh{\Pi}, \wh{\beta})$. It turns out that in this case, we cannot hope for $\wh{\beta}$ to be a good estimator of $\beta^*$. As can be seen from the proof of the following proposition, complete freedom in choosing $\Pi$ to fit the data results into over-adaptation to noise even if $\beta^*$ is low-dimensional -- in fact, the phenomenon already manifests itself for $d = 1$.
\begin{prop}\label{prop:negative_result} Let $\beta^* = 0$ and $\Pi^* = I_n$, i.e., $\mathbf{y} = \eps$ and consider the estimator \eqref{eq:lsq} with $k = n$. Then there exist constants $c,C > 0$ such that with probability at least $1 - C \exp(-c n)$
\begin{equation*}
\nnorm{\wh{\beta}}_2^2 \geq \frac{n}{2n + d}\frac{\sigma^2}{32\pi^2}.
\end{equation*} 
\end{prop}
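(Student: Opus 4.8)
The plan is to exhibit a feasible pair $(\Pi, \beta)$ for the unconstrained problem \eqref{eq:lsq} (with $k = n$) whose objective value is small, and then argue that \emph{any} near-optimal solution must have $\nnorm{\wh\beta}_2$ bounded away from zero. Since $\y = \eps$, the least-squares objective at the optimum is at most $\min_\Pi \min_\beta \nnorm{\Pi X \beta - \eps}_2^2 \le \min_\Pi \nnorm{(I_n - P_{\Pi X})\eps}_2^2$ where $P_{\Pi X}$ is the projection onto $\text{range}(\Pi X)$. The key point is that with full freedom in $\Pi$ we can effectively choose which coordinates of $\eps$ to "explain". Concretely, for $d = 1$ the column $\Pi X$ is just a permutation of the fixed Gaussian vector $X \in \R^n$; sorting the entries of $X$ and the entries of $\eps$ in the same order and then choosing $\beta$ to be the least-squares coefficient shows that $\nnorm{\wh\Pi X \wh\beta - \y}_2^2 \le \nnorm{(I_n - P_{\wh\Pi X})\eps}_2^2$, and the alignment of order statistics forces $\scp{\wh\Pi X}{\eps}$ to be large, hence the fitted coefficient $\wh\beta$ to be bounded below.

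More carefully, the first step is to take the sorted version of $X$, call it $X_{\text{sort}}$ (a specific permutation $\Pi_0 X$), and the sorted version of $\eps$; concordantly ordering the two vectors gives $\scp{\Pi_0 X}{\eps} = \sum_{i} X_{(i)} \eps_{(i)}$, a sum of products of matched order statistics. Standard facts about Gaussian order statistics (the normalized order statistics $X_{(i)}/\sqrt n$ and $\eps_{(i)}/\sqrt n$ converge to the quantile function $\Phi^{-1}$) give $\frac1n \scp{\Pi_0 X}{\eps} \to \sigma \int_0^1 \Phi^{-1}(u)^2\, du = \sigma$ (the $1/\pi$-type constant in the statement will come from a cruder but simpler bound, e.g.\ restricting to a fixed fraction of the extreme order statistics and invoking concentration, which avoids a precise CLT argument). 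The second step is the deterministic observation that for the optimal $(\wh\Pi,\wh\beta)$ one has, by optimality against this explicit competitor and the normal equations, $|\wh\beta| \cdot \nnorm{\wh\Pi X}_2^2 \ge \scp{\wh\Pi X}{\y} = \scp{\wh\Pi X}{\eps}$, which after maximizing over $\wh\Pi$ and using $\nnorm{\wh\Pi X}_2^2 = \nnorm{X}_2^2 \le 2n$ (w.h.p.) yields $|\wh\beta| \ge \frac{\max_\Pi \scp{\Pi X}{\eps}}{2n} \gtrsim \frac{\sigma}{\sqrt\pi} \cdot \frac{1}{\text{const}}$ — this is the $d=1$ version of the bound. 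The general-$d$ version follows by the same scheme: one exhibits a competitor using only the first coordinate of each $\x_i$ (so that the $d$-dimensional design does not help the adversary beyond what one column already gives), which accounts for the benign $\frac{n}{2n+d}$ factor; the $\frac{1}{32\pi^2}$ is just the accumulation of the crude constants.

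The step I expect to be the main obstacle is obtaining the lower bound $\max_\Pi \scp{\Pi X}{\eps} \gtrsim \sigma n / \sqrt\pi$ with exponentially small failure probability, while keeping the argument clean. The rearrangement inequality identifies $\max_\Pi \scp{\Pi X}{\eps}$ with the concordant sum of order statistics, but one then needs a high-probability \emph{lower} bound on $\sum_i X_{(i)} \eps_{(i)}$. Rather than controlling all order statistics simultaneously, the efficient route is to discard the middle and lower-bound the contribution of, say, the top $\alpha n$ and bottom $\alpha n$ order statistics of each vector: on the event that the $\lceil \alpha n\rceil$-th largest entry of $X$ exceeds some threshold $t_\alpha \approx \Phi^{-1}(1-\alpha)$ (a quantile concentration statement, provable via a binomial tail / Bernstein bound applied to $|\{i : X_i > t\}|$), and similarly for $\eps$, the concordant sum is at least $2\alpha n \cdot t_\alpha \cdot (\sigma t_\alpha)$ up to lower-order terms, which is $\Omega(\sigma n)$; choosing $\alpha$ to optimize and tracking constants produces the stated $\frac{\sigma^2}{32\pi^2}$. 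Union-bounding these $O(1)$ quantile events and the event $\nnorm{X}_2^2 \le 2n$ (a $\chi^2$ concentration bound, here for $nd$ or $n$ degrees of freedom) all have probability $1 - C\exp(-cn)$, giving the claimed probability. The remaining care is purely bookkeeping: making sure the explicit competitor is genuinely feasible (it is, since $k = n$ imposes no constraint), and that the normal-equations inequality $|\wh\beta| \nnorm{\wh\Pi X}_2^2 \ge \scp{\wh\Pi X}{\y}$ is applied to the actual optimizer, not the competitor.
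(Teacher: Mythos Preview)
Your overall architecture matches the paper's proof: reduce to a single column of $X$, show $\nnorm{X\wh\beta}_2 \geq \max_{\Pi}\scp{\Pi X_1}{\eps}/\nnorm{X_1}_2$ via the variational characterization of the optimum, then divide by $\sigma_{\max}(X)$ (this last step, not the competitor, is what produces the $n/(2n+d)$ factor, since $\sigma_{\max}(X)\leq \sqrt{2n}+\sqrt d$ w.h.p.). The one genuine difference is how you lower-bound $\max_{\Pi}\scp{\Pi X_1}{\eps}$. You go through the full rearrangement $\sum_i X_{(i)}\eps_{(i)}$ and propose to control it by quantile concentration on the extreme $\alpha n$ order statistics. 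The paper instead constructs a \emph{sign-matching} permutation $\Pi_{\pm}$ that pairs positive entries of $X_1$ with positive entries of $\eps$ (and negatives with negatives); conditional on the sign counts, $\scp{\Pi_{\pm}X_1}{\eps}$ is then a sum of i.i.d.\ copies of $\sigma|g||h|$ with mean $2\sigma/\pi$ (minus an $O(n_\pm)$ correction), and a single sub-exponential concentration bound plus Hoeffding on $n_\pm$ gives $\scp{\Pi_{\pm}X_1}{\eps}\geq \sigma n/(2\pi)$.

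Your route is valid but carries more bookkeeping: the ``up to lower-order terms'' in the concordant sum hides the middle indices $\alpha n<i<(1-\alpha)n$, whose products $X_{(i)}\eps_{(i)}$ can be negative when the empirical medians of $X_1$ and $\eps$ don't align. You would need an extra argument (e.g., that only $O(\sqrt n)$ indices near $i=n/2$ have mismatched signs, contributing $o(n)$) to close this. The paper's sign-matching device sidesteps order-statistic dependence entirely and makes the i.i.d.\ structure immediate, which is what keeps the constants clean enough to land exactly on $\sigma^2/(32\pi^2)$. Also, your ``normal-equations inequality'' $|\wh\beta|\,\nnorm{\wh\Pi X}_2^2 \geq \scp{\wh\Pi X}{\y}$ is specific to $d=1$; for general $d$ you really want the paper's reparameterization $\nnorm{X\wh\beta}_2=\max_{\Pi,\,u\in\text{range}(X)\cap\mathbb{S}^{n-1}}\scp{\Pi\eps}{u}$ and then specialize $u$ to $X_1/\nnorm{X_1}_2$.
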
   
\noindent In other words, in a pure noise setting, $\nnorm{\wh{\beta}}_2$ will be bounded away from zero by a constant (assuming $d = O(n)$). A result of a similar flavor is shown in \cite{Abid2017}. For $d = 1$, they show that the estimator $\wh{\beta}$ converges almost surely to a limit different from $\beta^*$ as $n \rightarrow \infty$, and they derive an explicit expression for that limit. Our result here is more of a qualitative nature, but it is non-asymptotic and not limited to $d = 1$. 

Both the result in \cite{Abid2017} as well as Proposition \ref{prop:negative_result} raise the question whether there exist alternative estimators that do significantly better in a regime where $\Pi^*$ can be an \emph{arbitrary} element of $\mc{P}_n$. Theorem 1 in \cite{Pananjady2017} indicates that the answer is negative: they show that for any estimator $(\wt{\Pi}, \wt{\beta})$, one has       
\begin{equation*}
\sup_{\Pi^* \in \mc{P}_n, \; \beta^* \in \R^d} \E\left[\textstyle\frac{1}{n} \nnorm{\wt{\Pi} X \wt{\beta} - \Pi^* X  \beta^*}_2^2\right] \geq c \sigma^2. 
\end{equation*}
While the above lower bound concerns estimation of $\Pi^* X  \beta^*$ rather than $\beta^*$, accurate in-sample prediction (or synonymously denoising), i.e., recovery of $\Pi^* X  \beta^*$, is typically easier than recovery of $\beta^*$. Along this direction, another negative result is shown in \cite{Hsu2017}. They demonstrate that a lower bound on the signal-to-noise ratio (SNR)
\begin{equation}\label{eq:snr}
\text{SNR} = \nnorm{\beta^*}_2^2 / \sigma^2
\end{equation}
of the order $\Omega(d / \log \log n)$ is required for any estimator of $\beta^*$ to achieve a non-trivial expected relative estimation error\footnote{Non-trivial here refers to a relative estimation error of lower order than that of the estimator $\wh{\theta} \equiv 0$.}. As shown in \cite{Pananjady2016}, the condition $\text{SNR} > n^c$ for $c$ large enough is sufficient for the solution $\wh{\Pi}$ of \eqref{eq:lsq} to recover $\Pi^*$, in which case 
$\nnorm{\wh{\beta} - \beta^*}_2^2$ scales as $O(d/n)$ as in the usual regression setting in the absence of an unknown permutation.

\subsection{Least squares estimation of $\beta^*$ if $\Pi^*$ is $k$-sparse, $k \ll n$}\label{subsec:sparsePi}
In summary, the previous section points to the fact that we cannot hope for accurate estimation of $\beta^*$ without additional
assumptions on $\Pi^*$ and/or the SNR of the problem. As motivated in the introduction, we henceforth turn our attention to 
the case of $\Pi^*$ being $k$-sparse, with $k$ ``significantly smaller'' than $n$. The allowable range of $k$ is addressed in 
our analysis presented below.       

We start by fixing notation. For $0 \leq k \leq n$, let us introduce the shorthand 
\begin{equation*}
\mc{P}_{n, k} = \{\Pi \in \mc{P}_n:\;  d_H(\Pi,I_n)\le k \}
\end{equation*}
for the constraint set of $\Pi$ in problem \eqref{eq:lsq}. Moreover, for a compact and symmetric\footnote{A set $S \subset \R^n$ is called symmetric
if $x \in S$ implies that $-x \in S$.} set $K \subset \R^n$, 
its Gaussian width is defined by 
\begin{equation}\label{eq:gaussianwidth}
w(K) = \E\nolimits_{g \sim N(0, I_n)}\left[\sup_{x \in K} |\scp{g}{x}| \right].
\end{equation}
While originating in geometric analysis, the Gaussian width is a measure of complexity that has been increasingly adopted in the analysis of high-dimensional linear inverse problems \cite{Chandrasekaran2012, Cai2016, PlanVershynin2015} in connection with Gordon's "Escape Through a Mesh Theorem" \cite{Gordon1988}, which is the key component in the proof of Theorem \ref{theo:constrained_permutation} below as well. In our setting, we use the Gaussian width \eqref{eq:gaussianwidth} in conjunction with the set 
\begin{equation}\label{eq:setT}
\mc{T} = \bigcup_{\Pi \in \mc{P}_{n,k}} \left\{\text{range}(\Pi - \Pi^{*\T}) \right\} \cap \mathbb{S}^{n-1}, 
\end{equation}
where we recall that $\Pi^{*\T} \in \mc{P}_{n,k}$ is the inverse of $\Pi^*$.
Let $\nnorm{\cdot}_0$ denote the $\ell_0$-''norm'', i.e., the number of non-zero entries of a vector. A simple observation is that for any $\Pi \in \mc{P}_{n,k}$ and any $v \in \R^n$, it holds that 
\begin{equation*}
\nnorm{\Pi v - \Pi^{*\T} v}_0 = \nnorm{(\Pi - I_n) v - (\Pi^{*\T} - I_n) v}_0 \leq \nnorm{(\Pi - I_n) v}_0 +  \nnorm{(\Pi^{*\T} - I_n) v}_0 \leq 2k.
\end{equation*} 
As a result, $\mc{T} \subseteq B_0(2k; n) \cap \mathbb{S}^{n-1}$, where for $0 \leq r \leq n$, the set $B_0(r;n) = \{v \in \R^n: \nnorm{v}_0 \leq r \}$ denotes the $\ell_0$-''ball" of radius $r$ in $\R^n$. By well-known results (cf.~\cite{PlanVershynin2013a}, Lemma 2.3), we hence have that 
\begin{equation}\label{eq:bound_width_T}
w(\mc{T}) \leq w(B_0(2k;n) \cap \mathbb{S}^{n-1}) \leq  3.5 \sqrt{2k \log(en / 2k)}. 
\end{equation}
Moreover, it is not hard to show that $w(\mc{T}) \geq w(B_0(n-k;k) \cap \mathbb{S}^{n-1})$, i.e., there is 
a lower bound on $w(\mc{T})$ of the same order. After these preparations, we are in position to state the following result. 
\begin{theo}\label{theo:constrained_permutation} Consider optimization problem \eqref{eq:lsq} and for positive integers $m$, 
denote $\nu_m = \E_{g \sim N(0, I_m)}[\nnorm{g}_2] \in \left[\frac{m}{(m+1)^{1/2}}, \; m^{1/2} \right]$ and let $\varepsilon \in (0,1/2)$ be a number such that 
\begin{equation}\label{eq:cond_samples_unrelaxed}
\nu_{n-d} - \frac{\epss}{1-\varepsilon}\nu_n \geq \frac{2}{1 - \varepsilon} w(\mc{T}). 
\end{equation}
If furthermore $n > 9 \vee 4d$, then with probability $\geq 1 - \frac{7}{2}\exp(-c_{\epss} n) - 2 \exp \left(-\frac{1}{2} (d \vee \log n) \right)$,  
\begin{equation}\label{eq:bound_samples_unrelaxed}
\nnorm{\wh{\beta} - \beta^*}_2 \leq   \frac{\sigma}{1 - \sqrt{\frac{4 d \vee \log n}{n}}} \left(\sqrt{\frac{5 (d \vee \log n)}{n}} + \frac{2 (1 + \sqrt{2}) \epss^{-2} \{w(\mc{T}) \vee \log n\}}{\sqrt{n}} \right).
\end{equation}  
\end{theo}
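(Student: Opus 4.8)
\textbf{Reduction and basic inequality.} The plan is to recast \eqref{eq:lsq} as a projection problem, apply a basic inequality exploiting feasibility of $\Pi^{*}$, and bound the two resulting error terms via Gordon's theorem and standard Gaussian matrix facts. Since every $\Pi\in\mc{P}_n$ is orthogonal, $\nnorm{\Pi X\beta-\y}_2=\nnorm{X\beta-\Pi^{\T}\y}_2$, so for each fixed $\Pi$ the inner minimization over $\beta$ in \eqref{eq:lsq} is ordinary least squares with optimal value $\nnorm{P^{\perp}\Pi^{\T}\y}_2$, where $P^{\perp}=I_n-X(X^{\T}X)^{-1}X^{\T}$ projects onto $\text{range}(X)^{\perp}$ (well defined as $X$ has full column rank with high probability once $n>d$). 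Hence $\wh{\Pi}\in\argmin_{\Pi\in\mc{P}_{n,k}}\nnorm{P^{\perp}\Pi^{\T}\y}_2$ and $\wh{\beta}=X^{+}\wh{\Pi}^{\T}\y$ with $X^{+}=(X^{\T}X)^{-1}X^{\T}$. Put $\mathbf{w}=\Pi^{*\T}\y=X\beta^{*}+\eps$, so $P^{\perp}\mathbf{w}=P^{\perp}\eps$. As $d_H(\Pi^{*},I_n)\le k$, the permutation $\Pi^{*}$ is feasible and yields objective value $\nnorm{P^{\perp}\Pi^{*\T}\y}_2=\nnorm{P^{\perp}\eps}_2$; therefore $\nnorm{P^{\perp}\wh{\Pi}^{\T}\y}_2\le\nnorm{P^{\perp}\eps}_2$. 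Set $\wh{z}=\wh{\Pi}^{\T}\y-\mathbf{w}=(\wh{\Pi}^{\T}-\Pi^{*\T})\y$; since $\wh{\Pi}^{\T}\in\mc{P}_{n,k}$, $\wh{z}/\nnorm{\wh{z}}_2\in\mc{T}$ whenever $\wh{z}\neq 0$ (otherwise the bound below on $\nnorm{\wh z}_2$ holds trivially). Squaring $\nnorm{P^{\perp}\eps+P^{\perp}\wh{z}}_2\le\nnorm{P^{\perp}\eps}_2$ and using $\wh z/\nnorm{\wh z}_2\in\mc T$ gives the basic inequality $\nnorm{P^{\perp}\wh{z}}_2^{2}\le -2\scp{P^{\perp}\wh{z}}{\eps}\le 2\nnorm{\wh{z}}_2\sup_{t\in\mc{T}}|\scp{t}{P^{\perp}\eps}|$.

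\textbf{Two width-driven estimates.} The idea is now to sandwich $\wh{z}$. For the lower bound: $\text{range}(X)^{\perp}$ is a uniformly random $(n-d)$-dimensional subspace, so it may be coupled to the row space of an $(n-d)\times n$ matrix with i.i.d.\ $N(0,1)$ entries, and Gordon's Escape Through a Mesh Theorem \cite{Gordon1988} applied to that matrix (with deviation parameters chosen proportional to $\varepsilon\nu_n$) shows that the sample-size condition \eqref{eq:cond_samples_unrelaxed} implies $\inf_{t\in\mc{T}}\nnorm{P^{\perp}t}_2\ge\rho$ on an event of probability $\ge 1-\tfrac72\exp(-c_{\varepsilon}n)$, with $\rho>0$ explicit and $\rho^{-1}=O(\varepsilon^{-1})$. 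For the upper bound: $\eps\mapsto\sup_{t\in\mc{T}}|\scp{P^{\perp}t}{\eps}|$ is $1$-Lipschitz on $\R^{n}$, and by the Sudakov--Fernique inequality (contraction under $P^{\perp}$, whence $w(\{P^{\perp}t:t\in\mc{T}\})\le w(\mc{T})$) its mean is at most $\sigma w(\mc{T})$, so Gaussian concentration gives $\sup_{t\in\mc{T}}|\scp{t}{P^{\perp}\eps}|\le\sigma(1+\sqrt{2})\{w(\mc{T})\vee\log n\}$ with probability $\ge 1-2\exp(-\tfrac12(d\vee\log n))$. Combining the basic inequality with $\nnorm{P^{\perp}\wh{z}}_2\ge\rho\nnorm{\wh{z}}_2$ then yields $\nnorm{\wh{z}}_2\le(2/\rho^{2})\sup_{t\in\mc{T}}|\scp{t}{P^{\perp}\eps}|=O(\sigma\varepsilon^{-2}\{w(\mc{T})\vee\log n\})$.

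\textbf{Passage to $\wh{\beta}$.} Since $\wh{\Pi}^{\T}\y=X\beta^{*}+\eps+\wh{z}$, we get $\wh{\beta}-\beta^{*}=X^{+}\eps+X^{+}\wh{z}$, hence $\nnorm{\wh{\beta}-\beta^{*}}_2\le\nnorm{X^{+}\eps}_2+\nnorm{\wh{z}}_2/\sigma_{\min}(X)$. Conditionally on $X$ one has $X^{+}\eps=(X^{\T}X)^{-1/2}\xi$ with $\xi\sim N(0,\sigma^{2}I_d)$, so $\nnorm{X^{+}\eps}_2\le\nnorm{\xi}_2/\sigma_{\min}(X)$; a $\chi^{2}$-tail bound gives $\nnorm{\xi}_2\le\sigma\sqrt{5(d\vee\log n)}$ and the standard least-singular-value bound for a Gaussian matrix gives $\sigma_{\min}(X)\ge\sqrt{n}(1-\sqrt{4(d\vee\log n)/n})$, each on an event of probability $\ge 1-\exp(-\tfrac12(d\vee\log n))$ provided $n>9\vee 4d$ (which also secures the full-column-rank property used above). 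Substituting the bounds for $\nnorm{\wh{z}}_2$, $\nnorm{\xi}_2$ and $\sigma_{\min}(X)$ into the last displayed inequality and taking a union bound over the finitely many exceptional events yields \eqref{eq:bound_samples_unrelaxed}.

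The technically hardest step is the uniform lower bound $\inf_{t\in\mc{T}}\nnorm{P^{\perp}t}_2\ge\rho$: one must transfer the escape-through-a-mesh estimate from a genuine Gaussian matrix to the projection onto the (rotation-invariant but non-Gaussian) null space of $X^{\T}$, control the relevant operator norm, and track constants and concentration parameters closely enough that exactly hypothesis \eqref{eq:cond_samples_unrelaxed} is what is needed, with the failure probability stated in the theorem. The remaining ingredients --- the basic inequality, the Sudakov--Fernique/concentration bound, and the singular-value estimates --- are comparatively routine.
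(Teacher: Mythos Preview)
Your proposal is correct and follows essentially the same route as the paper's proof: the decoupling $\wh{\Pi}\in\argmin_{\Pi}\nnorm{P^{\perp}\Pi^{\T}\y}_2$, the basic inequality from feasibility of $\Pi^{*}$, the lower bound on $\nnorm{P^{\perp}\wh z}_2$ via Gordon's escape theorem (the paper applies it directly to the uniformly distributed subspace $\text{range}(X)$, so no coupling to an auxiliary Gaussian matrix is needed, and one gets $\rho=\varepsilon$ exactly), the upper bound on $\sup_{t\in\mc T}|\scp{t}{P^{\perp}\eps}|$ via Sudakov--Fernique plus Gaussian concentration, and finally the passage to $\wh\beta$ via Hanson--Wright/$\chi^2$ tails and the least-singular-value bound. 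The only minor slip is in your probability bookkeeping: the Gaussian-concentration step for the supremum yields a failure probability of order $\exp(-\{w(\mc T)\vee\log n\})$, not $2\exp(-\tfrac12(d\vee\log n))$; the latter is the budget for the $\sigma_{\min}(X)$ and $\nnorm{X^{+}\eps}_2$ events, but this does not affect the final union bound stated in the theorem.
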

\noindent We start our interpretation of Theorem \ref{theo:constrained_permutation} by inspecting condition \eqref{eq:cond_samples_unrelaxed} which imposes a limit on how large $d$ and $k$ can be in relation to $n$. Roughly speaking, the requirements are $n = \Omega(d)$ and $n = \Omega(w^2(\mc{T}))$. In light of \eqref{eq:bound_width_T}, the latter condition becomes $n = \Omega(k \log(n/k))$. Specifically, let us fix $\epss = 1/4$ and $d = \alpha n$ for $\alpha \in (0,\frac{1}{4})$, then \eqref{eq:cond_samples_unrelaxed} essentially evaluates as
\begin{equation*}
n \geq \left(\frac{8}{3 (\sqrt{1 -\alpha} - 1/3)}\right)^2 w^2(\mc{T}). 
\end{equation*}
The error bound \eqref{eq:bound_samples_unrelaxed} consists of two parts. The first part equals the error one would have if the permutation $\Pi^*$ were known in advance and is thus inevitable. The second term is a bound on the excess error incurred for not knowing $\Pi^*$. That term is well controlled as long as long as the fraction of permuted observations is small relative to $n$. A crucial intermediate step in the proof of Theorem \ref{theo:constrained_permutation} is
a bound on $\nnorm{\Pi^{*\T} \y - \wh{\Pi} \y}_2$. Under an additional condition on the SNR, we may deduce from that bound that $\wh{\Pi}$ identifies the ``support''  $S^* = \{i: \, \Pi_{ii}^* \neq 1\}$ of $\Pi^*$. We may then re-fit with the corresponding observations left out, to achieve a smaller error in estimating $\beta^*$.
\begin{corro}\label{corro:refitting} For any $\delta \in (0,1)$, under the conditions of Theorem \ref{theo:constrained_permutation}, if it additionally holds that 
\begin{equation*}
\text{\emph{SNR}} > \frac{2 (1 + \sqrt{2})^2 \epss^{-4}}{\delta^2}\,\cdot \frac{k^2  \{w(\mc{T}) \vee \log n\}^2}{n}, \qquad n-k \geq 9 \vee 4d, 
\end{equation*}
the following events hold with probability at least $1 - \delta - 2\exp(-\frac{1}{2} (d \vee \log n))$:
\begin{align*}
\wh{S} \coloneq \{i:\;\wh{\Pi}_{ii} \neq 1 \} = S^*, \quad\qquad \nnorm{\wh{\wh{\beta}} - \beta^*}_2 \leq 
\frac{\sigma}{1 - \sqrt{\frac{4 d \vee \log n}{n - k}}} \sqrt{\frac{5 (d \vee \log n)}{n-k}}. 
\end{align*}
where $\wh{\wh{\beta}}$ denotes the ordinary least squares estimator based on data $\{(y_i, \mathbf{x}_i): \; i
\in [n] \setminus \wh{S} \}$.
\end{corro}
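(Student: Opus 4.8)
The plan is to derive both conclusions from a single quantity that is already controlled inside the proof of Theorem~\ref{theo:constrained_permutation}: the bound on $\nnorm{\Pi^{*\T}\y-\wh\Pi\y}_2$, which I will write as $\nnorm{\Pi^{*\T}\y-\wh\Pi\y}_2\le B_n$ with $B_n$ a constant multiple of the excess term in \eqref{eq:bound_samples_unrelaxed} (so $B_n\asymp\sigma\,\epss^{-2}\{w(\mc T)\vee\log n\}$), valid on the event of that theorem. First one shows $\wh S=S^*$; then the refitted estimator is ordinary least squares on the clean data indexed by $[n]\setminus\wh S=S^{*c}$, and a standard finite-sample bound finishes the proof.

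For $\wh S=S^*$, let $\wh\varphi$ be the permutation of $\wh\Pi$ and decompose, using \eqref{eq:lrwp},
\begin{equation*}
  \Pi^{*\T}\y-\wh\Pi\y \;=\; (I_n-\wh\Pi\,\Pi^*)X\beta^* \;+\; (\Pi^{*\T}-\wh\Pi)\eps .
\end{equation*}
The $i$-th coordinates of the two summands are $\x_i^{\T}\beta^*-\x_{\varphi(\wh\varphi(i))}^{\T}\beta^*$ and $\epsilon_{\varphi^{-1}(i)}-\epsilon_{\wh\varphi(i)}$, so the left side is supported on $V:=\{i:\wh\varphi(i)\neq\varphi^{-1}(i)\}$, with $|V|\le 2k$, $\wh S\,\triangle\,S^*\subseteq V$, and $V=\varnothing$ forcing $\wh\Pi=\Pi^{*\T}$ and hence $\wh S=S^*$. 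The noise part is controlled uniformly in $\Pi$: it has at most $2k$ nonzero entries, each a difference of two distinct entries of $\eps$, so $\sup_{\Pi\in\mc P_{n,k}}\nnorm{(\Pi^{*\T}-\Pi)\eps}_2\le 2\sup_{v\in B_0(2k;n)\cap\mathbb{S}^{n-1}}\scp{\eps}{v}$, which by \eqref{eq:bound_width_T} and Gaussian concentration of that supremum is $O(\sigma\{w(\mc T)\vee\log n\})$ off an event foldable into $\delta$. Hence $\nnorm{(I_n-\wh\Pi\,\Pi^*)X\beta^*}_2\le B_n':=B_n+O(\sigma\{w(\mc T)\vee\log n\})$. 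The crux is a matching reverse bound: if $\wh\Pi\neq\Pi^{*\T}$ then, on an event of probability $\ge 1-\delta$, $\nnorm{(I_n-\wh\Pi\,\Pi^*)X\beta^*}_2\gtrsim\delta\,k^{-1}\nnorm{\beta^*}_2$. Granting this, $\nnorm{\beta^*}_2\lesssim\delta^{-1}k\,B_n'$, whose square is incompatible with the hypothesised lower bound on $\text{SNR}$, so $\wh\Pi=\Pi^{*\T}$ and $\wh S=S^*$. (With the crude $B_n'$ this argument needs $\text{SNR}\gtrsim\delta^{-2}\epss^{-4}k^2\{w(\mc T)\vee\log n\}^2$; the sharper threshold with the extra $1/n$ requires carrying the precise form of $B_n$ from the proof of Theorem~\ref{theo:constrained_permutation} through the computation.)

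On $\{\wh S=S^*\}$ we have $[n]\setminus\wh S=S^{*c}$, a set of size $|S^{*c}|=n-|S^*|\ge n-k$ that is determined by the (data-independent) permutation $\varphi$; for each $i\in S^{*c}$, $(y_i,\x_i)$ is a clean pair $y_i=\x_i^{\T}\beta^*+\epsilon_i$, and $\{(\x_i,\epsilon_i)\}_{i\in S^{*c}}$ are i.i.d.\ with $\x_i\sim N(0,I_d)$, $\epsilon_i\sim N(0,\sigma^2)$ independent. Thus $\wh{\wh\beta}$ is least squares from $|S^{*c}|\ge n-k\ge 9\vee 4d$ clean Gaussian observations, and the bound behind the ``known-$\Pi^*$'' term of Theorem~\ref{theo:constrained_permutation} gives, off an event of probability at most $2\exp(-\tfrac12(d\vee\log n))$,
\begin{equation*}
  \nnorm{\wh{\wh\beta}-\beta^*}_2\le\frac{\sigma}{1-\sqrt{\frac{4d\vee\log n}{|S^{*c}|}}}\sqrt{\frac{5(d\vee\log n)}{|S^{*c}|}} ,
\end{equation*}
which is at most the asserted bound because the right-hand side decreases in $|S^{*c}|$ and $|S^{*c}|\ge n-k$. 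A union bound over the event of Theorem~\ref{theo:constrained_permutation}, the uniform noise bound and the reverse-bound event (all absorbed into $\delta$), together with the clean-OLS event, yields probability at least $1-\delta-2\exp(-\tfrac12(d\vee\log n))$.

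The main obstacle is the reverse bound for $\nnorm{(I_n-\wh\Pi\,\Pi^*)X\beta^*}_2$. A bound uniform over all $2k$-sparse permutations cannot deliver it, since a competitor may transpose a pair $(a,b)$ with $\x_a^{\T}\beta^*\approx\x_b^{\T}\beta^*$ and the minimal gap among the $n$ i.i.d.\ values $\{\x_i^{\T}\beta^*\}_{i=1}^n$ is only of order $\nnorm{\beta^*}_2/n^2$. One must instead use that $\wh\Pi$ minimizes \eqref{eq:lsq}. For $i\in S^*\setminus\wh S$ the obstruction comes from the \emph{fixed} pair $(i,\varphi(i))$; there are only $|S^*|\le k$ such pairs, so the anti-concentration estimate $\p\big(|(\x_i-\x_{\varphi(i)})^{\T}\beta^*|<t\big)\le t/(\sqrt{\pi}\,\nnorm{\beta^*}_2)$ survives a union bound at cost $k$ and forces $|\Pi^{*\T}\y-\wh\Pi\y|$ to be of order $\delta k^{-1}\nnorm{\beta^*}_2$ at such a coordinate. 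The complementary case $i\in\wh S\setminus S^*$ — where $\wh\Pi$ has moved an already-correct index — must be handled by a separate argument that again invokes optimality of $\wh\Pi$. Aggregating the coordinatewise contributions over $V$ (via the cycle structure of $\wh\varphi\circ\varphi$) and squeezing out the exact $k$-dependence and the $1/n$ factor, using the precise $B_n$, is the delicate part.
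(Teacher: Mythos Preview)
Your approach is substantially more involved than the paper's and leaves a gap you yourself flag. The paper avoids the signal/noise decomposition entirely by working with $\y$ directly: if some $i\in S^*$ were missed by $\wh\Pi$ (i.e., $\wh\Pi_{ii}=1$), then $(\wh\Pi^{\T}\y)_i=y_i$ while $(\Pi^{*\T}\y)_i=y_{\varphi^{-1}(i)}$, so the $i$-th coordinate of $\Pi^{*\T}\y-\wh\Pi^{\T}\y$ equals $y_{\varphi^{-1}(i)}-y_i$. Hence $\{\wh S=S^*\}$ is implied by
\[
  \min_{i\in S^*}\,|y_i-y_{\varphi^{-1}(i)}|\;>\;\nnorm{\Pi^{*\T}\y-\wh\Pi^{\T}\y}_\infty
  \;\le\;\nnorm{\Pi^{*\T}\y-\wh\Pi^{\T}\y}_2\;=:\;\gamma,
\]
with $\gamma$ already controlled by Lemma~\ref{lem:bound_pi}. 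The minimum on the left ranges only over the $k$ \emph{fixed} pairs $(i,\varphi^{-1}(i))$, $i\in S^*$, and for each such pair $y_i-y_{\varphi^{-1}(i)}\sim N\big(0,2(\nnorm{\beta^*}_2^2+\sigma^2)\big)$ --- signal and noise together --- so the small-ball estimate $\p(|y_i-y_j|\le\gamma)\le\gamma/\sqrt{2(\nnorm{\beta^*}_2^2+\sigma^2)}$ and a union bound over these $k$ pairs give $\nnorm{\beta^*}_2^2>k^2\gamma^2/(2\delta^2)$, which after substituting Lemma~\ref{lem:bound_pi} is exactly the stated SNR condition.

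Two concrete problems with your route. First, by splitting off the noise and bounding $\sup_{\Pi\in\mc P_{n,k}}\nnorm{(\Pi^{*\T}-\Pi)\eps}_2$ you add to $B_n$ a term of order $\sigma\sqrt{k\log(n/k)}$ that carries no $1/\sqrt n$; this is precisely why your ``crude'' threshold lacks the factor $1/n$, and the decomposition cannot recover it --- the paper simply never peels off the noise, so the anti-concentration is done for $y_i-y_j$ rather than for $\x_i^{\T}\beta^*-\x_j^{\T}\beta^*$. Second, the ``complementary case'' $i\in\wh S\setminus S^*$ and the cycle aggregation remain an unproved sketch; the paper's coordinate-wise identity at indices $i\in S^*$ sidesteps any reverse-norm argument and needs no further appeal to optimality of $\wh\Pi$ beyond the single $\ell_2$ bound from Lemma~\ref{lem:bound_pi}. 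Your treatment of the refitted OLS on $S^{*c}$ matches the paper's.
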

\noindent Under the conditions of the corollary, as long as $n - k = \Omega(n)$, the error rate in estimating $\beta^*$ is of the same order as if $\Pi^*$ were known in advance; the second term in \eqref{eq:bound_samples_unrelaxed} gets eliminated. It is important to note that support recovery (i.e., $\{\wh{S} = S^* \}$) is a weaker result than permutation recovery (i.e., $\{\Pi^* = \wh{\Pi} \}$). As discussed in more detail in $\S$\ref{subsec:permutationrecovery} below, the latter requires a considerably more stringent condition on the SNR than what is required in Corollary \ref{corro:refitting}.

\subsection{Convex relaxation}\label{subsec:convex_relaxation}    
While the approach of the previous section has appealing statistical properties, its computational hardness asks for computationally efficient alternatives with similar guarantees. As long as $\Pi^*$ is treated as a nuisance parameter, we may eliminate it in the following way. Introducing $f^* = \Pi^* X \beta^* - X \beta^* = (\Pi^* - I_n) X \beta^*$, model \eqref{eq:lrwp} can be re-expressed as 
\begin{equation*}
\y = X \beta^* + f^* + \eps.
\end{equation*}
This prompts the optimization problem 
\begin{align*}
&\min_{\beta \in \R^d, f \in \R^n} \, \nnorm{\y - X \beta - f}_2^2 \\
&\text{subject to} \;  f \in \bigcup_{\Pi \in \mc{P}_{n,k}} \text{range}(\Pi - I_n). 
\end{align*}
A first relaxation is given by 
\begin{align}\label{eq:relax_1} 
\begin{split}   
&\min_{\beta \in \R^d, f \in \R^n} \, \nnorm{\y - X \beta - f}_2^2  \\
&\text{subject to} \; \nnorm{f}_0 \leq k.   
\end{split}
\end{align}
We note in passing that one could additionally impose the constraint $\su f_i = 1$. However, it turns out that its addition does not yield significant statistical benefits, and it is thus omitted. 
Relaxation \eqref{eq:relax_1} is still not convex, but following the standard approach of replacing
the $\ell_0$-norm by the $\ell_1$-norm, we end up with the convex optimization problem 
\begin{align}\label{eq:relax_1b} 
\begin{split}
&\min_{\beta \in \R^d, f \in \R^n} \, \nnorm{\y - X \beta - f}_2^2  \\
&\text{subject to} \; \nnorm{f}_1 \leq b.   
\end{split}
\end{align}
Since it tends to be difficult to choose $b$ appropriately in practice, it is more convenient to work with the 
Lagrangian form of \eqref{eq:relax_1b}. After re-parameterizing $e = f / \sqrt{n}$\footnote{This reparameterization is merely for technical reasons} 
\begin{equation}\label{eq:relax_2}
 \min_{\beta \in \R^d, e \in \R^n} \frac{1}{n}\nnorm{\y - X \beta - \sqrt{n} e}_2^2 + \lambda \nnorm{e}_1, \quad \lambda > 0.
\end{equation}
Formulation \eqref{eq:relax_2} and variants thereof have been used in robust regression and signal recovery with gross corruptions (e.g., \cite{Laska2009, Nguyen2013, She2012, FoygelMackey}). In fact, \eqref{eq:relax_2} is equivalent to employing Huber's loss \cite{Huber1964} instead of squared loss, cf.~\cite{She2012} and the references therein. The connection to robust regression comes naturally as observations with mismatch between $\mathbf{x}$ and $y$ are likely to induce severe errors in model fitting beyond the usual noise, and hence take the role of outliers. Indeed, this reasoning could have been used to motivate \eqref{eq:relax_2} right away instead of via the sequence of relaxations stated above. Formulation \eqref{eq:relax_2} is related to least absolute deviation regression
\begin{equation}\label{eq:LAD}
 \min_{\beta \in \R^d} \nnorm{\y - X \beta}_1 
\end{equation}
in that \eqref{eq:LAD} is obtained from \eqref{eq:relax_2} in the limit $\lambda \rightarrow 0$ and the additional constraint
$\y = X \beta + \sqrt{n} e$. In that sense, \eqref{eq:LAD} can be seen as the counterpart to \eqref{eq:relax_2} in a noiseless setup. Problem \eqref{eq:LAD} has been analyzed under assumption \textbf{(G)} in the landmark papers of \cite{CandesTao2005, RudelsonVershynin2005} on the classical error correcting problem in coding theory.

Theorem \ref{theo:relaxation} below provides an upper bound on the $\ell_2$-error of the estimator of $\beta^*$ resulting as the optimal $\beta$ for \eqref{eq:relax_2}.
\begin{theo}\label{theo:relaxation} Let $(\wt{\beta}, \wt{e})$ be a minimizer of \eqref{eq:relax_2} with $\lambda = 4 (1 + M) \sigma \sqrt{2 \log(n) / n}$ for some $M > 0$. There exist constants $c_1, c_2, \epss > 0$ so that if
\begin{equation*}
k \leq c_1 \frac{n-d}{\log(n/k)},
\end{equation*}
the following holds with probability $\geq 1 - 2 \exp(-c_2 (n-d)) - 2n^{-M^2} - 2 \exp \left(-\frac{1}{2} (d \vee \log n) \right)$:
\begin{equation}\label{eq:bound_samples_relaxed}
\nnorm{\wt{\beta} - \beta^*}_2 \leq \frac{\sigma}{1 - \sqrt{\frac{4 d \vee \log n}{n}}} \left(\sqrt{\frac{5 (d \vee \log n)}{n}}  +  48 (1 + M) \frac{n}{n-d} \epss^{-1} \sqrt{\frac{2k \log n}{n}}\right).  
\end{equation}  
\end{theo}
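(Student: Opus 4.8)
The plan is to profile out $\beta$ from \eqref{eq:relax_2}, recognize the result as an $\ell_1$-penalized least squares (LASSO) problem for the ``error vector'' $e$, and combine a restricted-eigenvalue bound for the associated random design with the standard LASSO analysis. Set $e^* = f^*/\sqrt n$ where $f^* = (\Pi^*-I_n)X\beta^*$; since $d_H(\Pi^*,I_n)\le k$ forces $\Pi^*-I_n$ to have at most $k$ nonzero rows, $\nnorm{e^*}_0\le k$ (cf.\ Lemma \ref{lem:bound_e}). Let $P = I_n - X(X^\T X)^{-1}X^\T$ be the projector onto $\text{range}(X)^\perp$ (the inverse exists a.s.\ for $n>d$ under \textbf{(G)}). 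Minimizing \eqref{eq:relax_2} over $\beta$ for fixed $e$ gives $\beta(e) = (X^\T X)^{-1}X^\T(\y-\sqrt n e)$, with profiled objective $\frac1n\nnorm{P\y-\sqrt n Pe}_2^2 + \lambda\nnorm{e}_1$; since $\y = X\beta^*+\sqrt n e^*+\eps$ and $PX=0$, this is a LASSO problem with response $P\y = \sqrt n Pe^* + P\eps$, design $\sqrt n P$, and $k$-sparse target $e^*$. Substituting a minimizer $\wt e$ back gives the identity
\[
\wt\beta - \beta^* \;=\; (X^\T X)^{-1}X^\T\eps \;+\; \sqrt n\,(X^\T X)^{-1}X^\T(e^*-\wt e),
\]
hence $\nnorm{\wt\beta-\beta^*}_2 \le \nnorm{(X^\T X)^{-1}X^\T\eps}_2 + \frac{\sqrt n}{\sigma_{\min}(X)}\nnorm{e^*-\wt e}_2$, which splits the bound into an oracle term and a term driven by the LASSO error.

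For the first term, conditionally on $X$ one has $X^\T\eps = X^\T P_X\eps$ with $P_X\eps\sim N(0,\sigma^2 P_X)$, so $\nnorm{P_X\eps}_2^2/\sigma^2\sim\chi^2_d$; a standard $\chi^2$ tail gives $\nnorm{P_X\eps}_2^2 \le 5\sigma^2(d\vee\log n)$ with probability $\ge 1-\exp(-\frac12(d\vee\log n))$, while the smallest-singular-value bound for Gaussian matrices gives $\sigma_{\min}(X)\ge\sqrt n\,(1-\sqrt{4(d\vee\log n)/n})$ on an event of the same probability. Together these reproduce exactly the term $\frac{\sigma}{1-\sqrt{4(d\vee\log n)/n}}\sqrt{5(d\vee\log n)/n}$ in \eqref{eq:bound_samples_relaxed}, and the same bound on $\sigma_{\min}(X)$ turns the prefactor $\sqrt n/\sigma_{\min}(X)$ of the second term into the outer factor $1/(1-\sqrt{4(d\vee\log n)/n})$.

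It remains to bound $\nnorm{e^*-\wt e}_2$ via the standard LASSO argument, which requires (a) a deviation bound $\lambda \ge \frac{4}{\sqrt n}\nnorm{P\eps}_\infty$, guaranteeing that $e^*-\wt e$ lies in the cone $\mathcal{C} = \{v:\nnorm{v_{(S^*)^c}}_1\le 3\nnorm{v_{S^*}}_1\}$ with $S^*=\text{supp}(e^*)$, and (b) a restricted-eigenvalue bound $\inf_{v\in\mathcal{C}\cap\mathbb{S}^{n-1}}\nnorm{Pv}_2^2\ge\kappa$. For (a), conditionally on $X$ each coordinate of $P\eps$ is $N(0,\sigma^2 P_{ii})$ with $P_{ii}\le 1$, so a union bound gives $\nnorm{P\eps}_\infty\le(1+M)\sigma\sqrt{2\log n}$ with probability $\ge 1-2n^{-M^2}$, which makes the stated $\lambda = 4(1+M)\sigma\sqrt{2\log n/n}$ admissible. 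For (b), $\text{range}(X)^\perp$ is a uniformly random $(n-d)$-dimensional subspace, so — exactly as in the proof of Theorem \ref{theo:constrained_permutation} — Gordon's escape-through-a-mesh inequality (in the form applicable to the projector onto a random subspace) yields $\inf_{v\in\mathcal{C}\cap\mathbb{S}^{n-1}}\nnorm{Pv}_2 \ge \frac{1}{\sqrt n}\big(\nu_{n-d}-w(\mathcal{C}\cap\mathbb{S}^{n-1})-t\big)$ up to probability $e^{-t^2/2}$; since $v\in\mathcal{C}\cap\mathbb{S}^{n-1}$ obeys $\nnorm{v}_1\le 4\sqrt k$, the cone width satisfies $w(\mathcal{C}\cap\mathbb{S}^{n-1})\le c\sqrt{k\log(en/k)}$, and the hypothesis $k\le c_1(n-d)/\log(n/k)$ with $c_1$ small enough forces this to be $\le\frac13\nu_{n-d}$; taking $t=\frac13\nu_{n-d}$ gives $\kappa\ge c_3\,\frac{n-d}{n}$ for an absolute constant $c_3>0$, with probability $\ge 1-2\exp(-c_2(n-d))$. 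The standard LASSO oracle inequality then gives $\nnorm{e^*-\wt e}_2 \lesssim \frac{\sqrt k\,\lambda}{\kappa} \le 48(1+M)\,\epss^{-1}\,\frac{n}{n-d}\,\sigma\sqrt{2k\log n/n}$ for a suitable absolute constant $\epss>0$ after collecting constants, and inserting this into the inequality of the first paragraph completes the proof; the claimed failure probability is the union of the three events above.

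The step I expect to be the main obstacle is the restricted-eigenvalue bound (b): one must invoke Gordon's theorem in the form appropriate for the projector onto a uniformly random subspace (rather than for a Gaussian matrix), and control the Gaussian width of the LASSO cone intersected with the sphere sharply enough — at the order $\sqrt{k\log(n/k)}$ — for the admissible sparsity level $k\lesssim (n-d)/\log(n/k)$ to emerge. The remaining ingredients (the profiling identity, the $\chi^2$ and singular-value tail bounds, and the LASSO oracle inequality) are routine, the only real labor being the bookkeeping of constants needed to reach the explicit form of \eqref{eq:bound_samples_relaxed}.
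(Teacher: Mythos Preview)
Your proposal is correct and matches the paper's proof almost step for step: the profiling/decoupling identity is the paper's Lemma \ref{lem:decoupling_LAD}, the split $\nnorm{\wt\beta-\beta^*}_2 \le \nnorm{(X^\T X)^{-1}X^\T\eps}_2 + \sigma_{\min}(X)^{-1}\sqrt n\,\nnorm{e^*-\wt e}_2$ and the control of the oracle term via singular-value and $\chi^2$ tails is Lemma \ref{lem:Xteps}, and your LASSO argument (cone inclusion under $\lambda\ge 4\nnorm{P\eps}_\infty/\sqrt n$, then RE $\Rightarrow$ $\ell_2$-error) is exactly Lemma \ref{lem:bound_e}.

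The one genuine difference is in how the restricted-eigenvalue bound (your step (b), the paper's Lemma \ref{lem:RE}) is obtained. You propose to apply Gordon's escape-through-a-mesh directly to the cone $\mc{C}(S^*,3)\cap\mathbb{S}^{n-1}$, using the well-known width bound $w(\mc{C}(S^*,3)\cap\mathbb{S}^{n-1})=O(\sqrt{k\log(n/k)})$. The paper instead takes the classical RIP route: it invokes the Johnson--Lindenstrauss-type concentration of a uniformly random projection (Lemma \ref{lem:DasGupta}), passes to a $(3k,\delta)$-restricted isometry via the Baraniuk--Davenport--DeVore--Wakin argument, and then converts RIP to a restricted-eigenvalue lower bound via \cite{Bickel2009}. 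Both approaches are valid and yield the same sparsity threshold $k\lesssim (n-d)/\log(n/k)$ and the same RE constant of order $(n-d)/n$; your Gordon-based argument is arguably more direct and reuses the machinery already deployed for Theorem \ref{theo:constrained_permutation}, whereas the paper's RIP chain keeps the two theorems technically independent. One caveat: the specific inequality you write, $\nnorm{Pv}_2 \ge n^{-1/2}(\nu_{n-d}-w-t)$, is not the form of Gordon's theorem stated in Lemma \ref{lem:escape}; you would need to either invoke a sharper variant (e.g., of the Oymak--Hassibi type) or rework the bound from Lemma \ref{lem:escape} by optimizing over $\epss$, but either way the conclusion $\kappa\gtrsim (n-d)/n$ follows.
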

\noindent Comparing the upper bounds \eqref{eq:bound_samples_unrelaxed} and \eqref{eq:bound_samples_relaxed} of the original problem and its relaxation, respectively, we observe a close agreement given that $w(\mc{T}) = \Theta(k \log(n/k))$. Apart from the slight change of the order in the second term, only constants differ. Similarly to Corollary \ref{corro:refitting}, we may consider a two-stage procedure in order to further improve upon \eqref{eq:bound_samples_relaxed}: given
$\wt{e}$ and a suitable threshold $t$, we let $\wt{S}_t = \{i: \, |\wt{e}_i| \geq t \}$ and obtain a plain least squares fit based on data $\{(y_i, \mathbf{x}_i): \; i
\in [n] \setminus \wt{S}_t \}$ yielding $\wt{\wt{\beta}}$. With $k$ assumed to be known, we may take $t$ as the $k$-th largest largest entry of $\wt{e}$ in absolute magnitude. The formal analysis is similar to Corollary \ref{corro:refitting} and is hence omitted.

\subsection{Estimation of $\Pi^*$ given an estimator of $\beta^*$}\label{subsec:permutationrecovery}
Having discussed estimation of $\beta^*$, we now come back to the problem of estimating $\Pi^*$. As indicated above, the latter problem turns out to be significantly more challenging than the former problem. In the sequel, we study the question of how the availability of an accurate estimate of $\beta^*$ can be leveraged to construct an estimator of $\Pi^*$ that is computationally feasible including the case $d > 1$. As mentioned earlier, except for $d=1$, the optimization problem in Eq.~\eqref{eq:lsq} 
is in general NP-hard. When $d=1$, that is there is only one predictor in the regression model, $\wh{\Pi}$ is determined as a minimizer of the optimization problem 
\begin{equation}\label{eq:permutation_d1}
\max_{\Pi \in \mc{P}_{n}} \{\scp{\Pi X}{\y},\scp{\Pi X}{-\y} \}  \qquad \text{subject to} \;\, d_H(\Pi, I_n) \leq k. 
\end{equation}
For $k = n$, maximizing each term inside the curly brackets is a specific instance of a linear assignment problem \cite{Burkard2009}, a class of problems that can be solved in polynomial time despite their combinatorial nature. In fact, it easy to see that
\begin{equation}\label{eq:maximuminnerproduct}
\max_{\Pi \in \mc{P}_{n}} \scp{\Pi X}{\y} = \su X_{(i)} \y_{(i)},
\end{equation}
where $X_{(i)}$ and $\y_{(i)}$ denote the $i$-th order statistic of $X$ and $\y$, respectively, $i \in [n]$. Hence, for $k = n$, problem
\eqref{eq:permutation_d1} reduces to two vanilla sorting operations. At this point, it is not well understood yet whether the
Hamming constraint $d_H(\Pi, I_n) \leq  k$ for general $k$ causes problem \eqref{eq:permutation_d1} to be NP-hard again. So far, we are not aware of any computationally efficient algorithm for general $k$.

Note that when $\beta^*$ is known, computing the least squares estimator of $\Pi^*$ reduces to solving precisely one of the two optimization problem encountered in \eqref{eq:permutation_d1} for $d = 1$:
\begin{equation}\label{eq:permutation_givenbetastar}
\max_{\Pi \in \mc{P}_{n}} \scp{\Pi X \beta^*}{\y}  \qquad \text{subject to} \;\, d_H(\Pi, I_n) \leq k. 
\end{equation}
At this point, a natural idea is to replace $\beta^*$ by an accurate and computationally feasible estimator like the one
discussed in $\S$\ref{subsec:convex_relaxation}. From a computational point of view, this already constitutes a simplification
as \eqref{eq:permutation_givenbetastar} reduces to an integer linear program; while problems in this class are still NP-hard in general, problem \eqref{eq:permutation_givenbetastar} can be considered as much more benign than the original problem \eqref{eq:lsq} which belongs to the class of quadratic assignment problems notorious for their computational hardness. Due to recent advances in integer programming that have meanwhile been taken advantage of for a series of other statistical problems \cite{Bertsimas2013, Bertsimas2016}, it turns out that problem \eqref{eq:permutation_givenbetastar} is practically feasible at least for $n$ in the order of a few thousands. For large $n$, we instead recommend estimating the support of $\Pi^*$ and then solve the unconstrained problem \eqref{eq:maximuminnerproduct} restricted to observations in the estimated support. Formally, denote by $\wt{S}$ an estimator of the support of $\Pi^*$ and let 
$\y_{\wt{S}}$ and $X_{\wt{S}}$ be the sub-vector of $\y$ and the row submatrix of $X$ corresponding to observations in $\wt{S}$, respectively.  We then estimate $\Pi^*$ by $\wt{\Pi}$ defined by
\begin{equation}\label{eq:permutation_estimated_support}
  \wt{\Pi} X =    \begin{pmatrix}
    \wt{\Pi}_{\wt{S},\wt{S}} \\
    I_{n - |\wt{S}|}
  \end{pmatrix} X,
\end{equation}
where 
\begin{equation*}
\wt{\Pi}_{\wt{S}} X = \begin{pmatrix}
  X_{\wt{S}} \\
  X_{\wt{S}^c}
\end{pmatrix}, \quad \text{and} \; \; \wt{\Pi}_{\wt{S}, \wt{S}} = \argmax_{\Pi \in \mc{P}_{|\wt{S}|}} \scp{\Pi X_{\wt{S}}}{\y_{\wt{S}}}.  
\end{equation*}
We now turn to the statistical limits of permutation recovery, including a lower bound on the SNR in the idealized case with known $\beta^*$. 
For simplicity, the theorem stated below is for the case $k = n$, but it generalizes to $k < n$ conditional on
having $\{\wt{S} = S^* \}$ in that all expressions in $n$ below would get replaced by $k$.
\begin{theo}\label{theo:permutationrecovery}
  For $b \in \R^d$, consider $\wh{\Pi}(b) = \argmax_{\Pi \in \mc{P}_n} \scp{\Pi X b}{\y}$.  
\begin{itemize}
\item[(a)] Let $\delta, \Delta, \eta > 0$, and let $\wh{\theta}$ be an estimator s.t.~the event $\{\nnorm{X \wh{\theta} - X\beta^*}_{\infty} \leq \sigma \Delta \}$ holds with probability at least $1 - \eta$. If 
  \begin{equation*}
    \dfrac{\|\beta^*\|_2^2}{\sigma^2}>\dfrac{n^2(n-1)^2}{4 \delta^2\pi} \left( \Delta + 2 \log\dfrac{n(n-1)}{\delta} \right)^2, \: \: \text{then} \: \: \p(\widehat{\Pi}(\wh{\theta}) \ne \Pi^*)\le 2\delta + \eta.
  \end{equation*}
\item[(b)] Suppose that $n \geq 5$. If
  \begin{equation*}
    \dfrac{\|\beta^*\|_2^2}{\sigma^2}< C n^2, \: \: \text{for some constant $C>0$, then} \: \: \p(\widehat{\Pi}(\beta^*)\ne \Pi^*)\ge .35.
  \end{equation*}
\end{itemize}
\end{theo}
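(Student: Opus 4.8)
The plan is to reduce both parts to a one-dimensional assignment problem by stripping off the unknown permutation. For any $b\in\R^d$, writing $\Pi=\Pi^*\Sigma$ with $\Sigma\in\mc{P}_n$ gives $\scp{\Pi X b}{\y}=\scp{\Sigma X b}{\Pi^{*\T}\y}$, and $\Pi^{*\T}\y=X\beta^*+\Pi^{*\T}\eps$ with $\Pi^{*\T}\eps\sim N(0,\sigma^2 I_n)$. Keeping the paper's convention of writing $\eps$ for $\Pi^{*\T}\eps$ and abbreviating $v\coloneq X\beta^*$ --- whose entries are i.i.d.\ $N(0,\nnorm{\beta^*}_2^2)$ and independent of $\eps$ --- the event $\{\wh{\Pi}(b)=\Pi^*\}$ becomes the event that $I_n$ maximizes $\Sigma\mapsto\scp{\Sigma X b}{v+\eps}$ over $\mc{P}_n$. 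By the rearrangement inequality this holds iff no transposition of $I_n$ increases the objective, i.e.\ iff $((Xb)_i-(Xb)_j)\,((v+\eps)_i-(v+\eps)_j)\geq 0$ for every pair $i<j$; if all these are strict, $I_n$ is the unique maximizer and $\wh{\Pi}(b)=\Pi^*$ exactly. (Ties among the $(Xb)_i$ or the $(v+\eps)_i$ occur only on a $\p$-null set and are ignored.)

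For part (a) I would take $b=\wh{\theta}$, write $w=X\wh{\theta}$, and condition on $E=\{\nnorm{X\wh{\theta}-X\beta^*}_\infty\leq\sigma\Delta\}$, which has probability $\geq 1-\eta$. On $E$ one has $w_i-w_j=(v_i-v_j)+r_{ij}$ with $|r_{ij}|\leq 2\sigma\Delta$, while $(v+\eps)_i-(v+\eps)_j=(v_i-v_j)+(\epsilon_i-\epsilon_j)$, so a pair $(i,j)$ can fail to be strictly concordant only if $|v_i-v_j|\leq 2\sigma\Delta$ or $|v_i-v_j|\leq|\epsilon_i-\epsilon_j|$. Hence by the reduction $\p(\wh{\Pi}(\wh{\theta})\neq\Pi^*)\leq\eta+\sum_{i<j}\big(\p(|v_i-v_j|\leq 2\sigma\Delta)+\p(|v_i-v_j|\leq|\epsilon_i-\epsilon_j|)\big)$. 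The first probability is controlled by Gaussian anti-concentration for $v_i-v_j\sim N(0,2\nnorm{\beta^*}_2^2)$, giving $\leq 2\sigma\Delta/(\sqrt{\pi}\nnorm{\beta^*}_2)$; the second equals $\tfrac{2}{\pi}\arctan(\sigma/\nnorm{\beta^*}_2)\leq 2\sigma/(\pi\nnorm{\beta^*}_2)$ since $(v_i-v_j)/(\epsilon_i-\epsilon_j)$ is $\nnorm{\beta^*}_2/\sigma$ times a standard Cauchy variable (alternatively, splitting this event at a threshold of order $\sigma\sqrt{\log(n^2/\delta)}$ and using a Gaussian tail bound produces the logarithmic term appearing in the stated condition). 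Summing over the $\binom{n}{2}$ pairs and requiring the total to be $\leq 2\delta$ yields a lower bound on $\nnorm{\beta^*}_2^2/\sigma^2$ that is implied by the one in the statement, and then $\p(\wh{\Pi}(\wh{\theta})\neq\Pi^*)\leq 2\delta+\eta$.

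For part (b) I would take $b=\beta^*$, so $Xb=v$, and use the reduction in the other direction: $\wh{\Pi}(\beta^*)\neq\Pi^*$ whenever some transposition \emph{strictly} increases $\scp{\Sigma v}{v+\eps}$, i.e.\ whenever some pair is strictly discordant. To get a constant lower bound, restrict to the $m=\lfloor n/2\rfloor$ disjoint pairs $(2\ell-1,2\ell)$; since they occupy disjoint blocks of the independent coordinates $\{v_i\},\{\epsilon_i\}$, the events $D_\ell=\{(v_{2\ell-1}-v_{2\ell})\,((v+\eps)_{2\ell-1}-(v+\eps)_{2\ell})<0\}$ are independent with a common probability $q$. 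The vector $(v_{2\ell-1}-v_{2\ell},\,(v+\eps)_{2\ell-1}-(v+\eps)_{2\ell})$ is centered bivariate Gaussian with correlation $\rho=\nnorm{\beta^*}_2/\sqrt{\nnorm{\beta^*}_2^2+\sigma^2}$, so Sheppard's orthant formula gives $q=\tfrac12-\tfrac1\pi\arcsin\rho=\tfrac1\pi\arccos\rho$. Therefore
\[
\p(\wh{\Pi}(\beta^*)\neq\Pi^*)\;\geq\;1-(1-q)^m\;\geq\;1-e^{-mq},
\]
and using $\arccos\rho\geq\sqrt{1-\rho^2}=(1+\text{SNR})^{-1/2}$ together with $\text{SNR}<Cn^2$ gives $q>(\pi\sqrt{1+Cn^2})^{-1}$, hence $mq\geq(n-1)/(2\pi\sqrt{1+Cn^2})$, which is bounded below by a positive constant. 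Choosing $C$ small enough makes $1-e^{-mq}\geq.35$ for all $n\geq 5$.

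The anti-concentration and Gaussian-tail estimates and the union bound are routine. In (a) the step needing the most care is keeping the two perturbations --- the $2\sigma\Delta$ coming from $\wh{\theta}$ and the $\epsilon_i-\epsilon_j$ coming from the noise --- cleanly separated so that the combined requirement collapses to the stated form, together with the measure-zero tie claim underlying the rearrangement argument. In (b) the crux is the exact evaluation $q=\tfrac1\pi\arccos\rho$ via Sheppard's formula, followed by the short but slightly delicate calculus that turns $\text{SNR}=O(n^2)$ into $mq=\Omega(1)$, and hence into a constant lower bound on the failure probability; this is also where $n\geq5$ is used.
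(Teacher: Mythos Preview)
Your argument is correct in both parts, but it proceeds along a different route than the paper's, and in fact is somewhat sharper and more self-contained.

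For part (a), the paper first conditions on $X$ and bounds $\p(\wh{\Pi}(\beta^*)\neq\Pi^*\mid X)$ via a Gaussian tail bound on the noise differences $\epsilon_i-\epsilon_j$, which is what produces the factor $\log\frac{n(n-1)}{\delta}$; it then removes the conditioning by controlling $\p(\min_{i<j}|\x_i^\T\beta^*-\x_j^\T\beta^*|\leq\gamma)$ via anti-concentration and a union bound, and finally absorbs the $\Delta$-perturbation from $\wh{\theta}$ by shifting $\gamma$. You instead apply a single union bound over pairs and handle the two perturbations marginally: anti-concentration of $v_i-v_j$ for the $2\sigma\Delta$ term, and the exact Cauchy law of $(v_i-v_j)/(\epsilon_i-\epsilon_j)$ for the noise term. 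This avoids the conditioning step entirely and, as you note, yields a requirement of the form $\text{SNR}\gtrsim \frac{n^2(n-1)^2}{\delta^2}(\Delta+O(1))^2$, which is strictly weaker than (hence implied by) the stated hypothesis with the logarithmic term. The containment $\{\text{discordant pair on }E\}\subset\{|v_i-v_j|\leq 2\sigma\Delta\}\cup\{|v_i-v_j|\leq|\epsilon_i-\epsilon_j|\}$ is the key deterministic step and is correctly justified.

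For part (b), the paper uses a birthday-type argument: it shows that with probability at least $0.85$ two fitted values $\x_{i_0}^\T\beta^*,\x_{j_0}^\T\beta^*$ fall within $\gamma$ of each other (via a binomial lower bound on the number of points landing in a fixed interval), and then, conditional on this, lower bounds the probability that noise flips the corresponding pair by $\Phi(-\gamma/(\sqrt{2}\sigma))$. Your approach via $\lfloor n/2\rfloor$ disjoint pairs, independence, and Sheppard's formula $q=\tfrac{1}{\pi}\arccos\rho$ is cleaner: it sidesteps the somewhat delicate conditional argument and the choice of $\gamma$, and the calculus $\arccos\rho\geq\sqrt{1-\rho^2}=(1+\text{SNR})^{-1/2}$ together with $mq\geq\frac{n-1}{2\pi\sqrt{1+Cn^2}}$ (which is increasing in $n$, so minimized at $n=5$) gives the constant lower bound directly for $C$ small enough. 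Both approaches ultimately exploit that under $\text{SNR}=O(n^2)$ the per-pair discordance probability is $\Omega(1/n)$, but yours makes the independence structure do the work rather than a two-stage conditioning.
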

\noindent If $\beta^*$ is known, specializing part (a) to the case $\Delta = 0$ asserts that $\text{SNR} = \wt{\Omega}(n^4)$ is a sufficient condition for permutation recovery. This is to be compared to a result in \cite{Pananjady2016} stating that the estimator
$\wh{\Pi}$ in \eqref{eq:lsq} recovers $\Pi^*$ if $\text{SNR} = \Omega(n^c)$, where the constant $c > 0$ is not specified. Next, let us now turn to the case where $\beta^*$ is substituted by an estimator $\wh{\theta}$. Using standard concentration arguments, one shows that
\begin{equation*}
  \nnorm{X \wh{\theta} - X\beta^*}_{\infty} \leq \max_{1 \leq i \leq n} \nnorm{\x_i}_2 \nnorm{\wh{\theta} - \beta^*}_2 \leq
  C (\sqrt{d} + \sqrt{\log n}) \nnorm{\wh{\theta} - \beta^*}_2
\end{equation*}
holds with high probability. Specifically, considering the estimator $\wt{\beta}$ in $\S$\ref{subsec:convex_relaxation}, Theorem \ref{theo:relaxation} then yields $\nnorm{X \wt{\beta} - X\beta^*}_{\infty} \leq C \sigma \sqrt{n}$ for $k$ small enough, with high probability. Inserting $\Delta = C \sqrt{n}$ into part (a) then results into the requirement $\text{SNR} = \wt{\Omega}(n^5)$. We stress again that as opposed to $\wh{\Pi}$, the estimator $\wt{\Pi}(\wt{\beta})$ is computationally appealing as it is obtained from a quadratic program \eqref{eq:relax_2} and subsequent sorting.

Finally, part (b) provides evidence that the condition $\text{SNR} = \Omega(n^2)$ is necessary for permutation recovery. While the result in (b) concerns a specific estimator, there does not seem to be much indication for the existence of a substantially better estimator. In \cite{Pananjady2016}, it is shown that $\text{SNR} = \Omega(n^{8/9})$ is necessary for \emph{any} estimator.

\section{Numerical results}\label{sec:numericalresults}

\subsection{Simulated data}

Below, we present the results of two synthetic data experiments that are meant to serve as illustration of the developments in the previous section. In the first set of experiments, we generate $n = 200$ observations from model \eqref{eq:lrwp} under $(\textbf{G})$ with $d = 10$,
$\sigma \in \{.01, .02, .05, .1, .2, .5, 1\}$, and $k/n \in \{.01, .02, .05, .1, .15, .2, \ldots, .5\}$, where the support of $\Pi^*$ is selected uniformly at random. The parameter $\beta^*$ is generated from the uniform distribution on $\mathbb{S}^{d-1}$. For each pair $(\sigma, k/n)$, we conduct 100 independent replications.  

We compare the following estimators:
\begin{itemize}
\item[(i)] The naive estimator (ordinary least squares estimator) of $\beta^*$ that ignores the fact that a fraction of the data is mismatched; this corresponds to the choice $k = 0$ in \eqref{eq:lsq}.
\item[(ii)] The estimator $(\wt{\beta}, \wt{e})$ of $\S$\ref{subsec:convex_relaxation} with the choice $\lambda = 0.2 \sigma \sqrt{\log(n) / n}$.
\item[(iii)] A thresholded version of the estimator in (ii) that discards those observations
             corresponds to the largest $k$ elements of $\{|\wt{e}_i|, \; i \in [n]\}$, and performs a least squares re-fit using the remaining $n-k$ observations.   
\end{itemize}
The estimator $(\wt{\beta}, \wt{e})$ is computed in \texttt{CVX} \cite{cvx}.  

The estimator $\wh{\beta}$ in $\S$\ref{subsec:sparsePi} is compared to $\wt{\beta}$ in a second set of simulations only for the case $d = 1$. In that case,  computation of $\wh{\beta}$ reduces to \eqref{eq:permutation_d1} where each of the two inner optimization problems can be cast as an integer linear program with $n^2$ variables and  $2n + 1$ linear constraints. It turns out that the general purpose routine \texttt{cplexbilp} in \texttt{IBM CPLEX} \cite{CPLEX} is able to solve such problems in only a few seconds for $n = 200$. Again, such reduction is limited to the case $d = 1$. 

\begin{figure}[h]
\begin{center}
\hspace*{-1ex}\begin{tabular}{ccc}
(i)    &    (ii)     & (iii)    \\
\includegraphics[width = 0.32\textwidth]{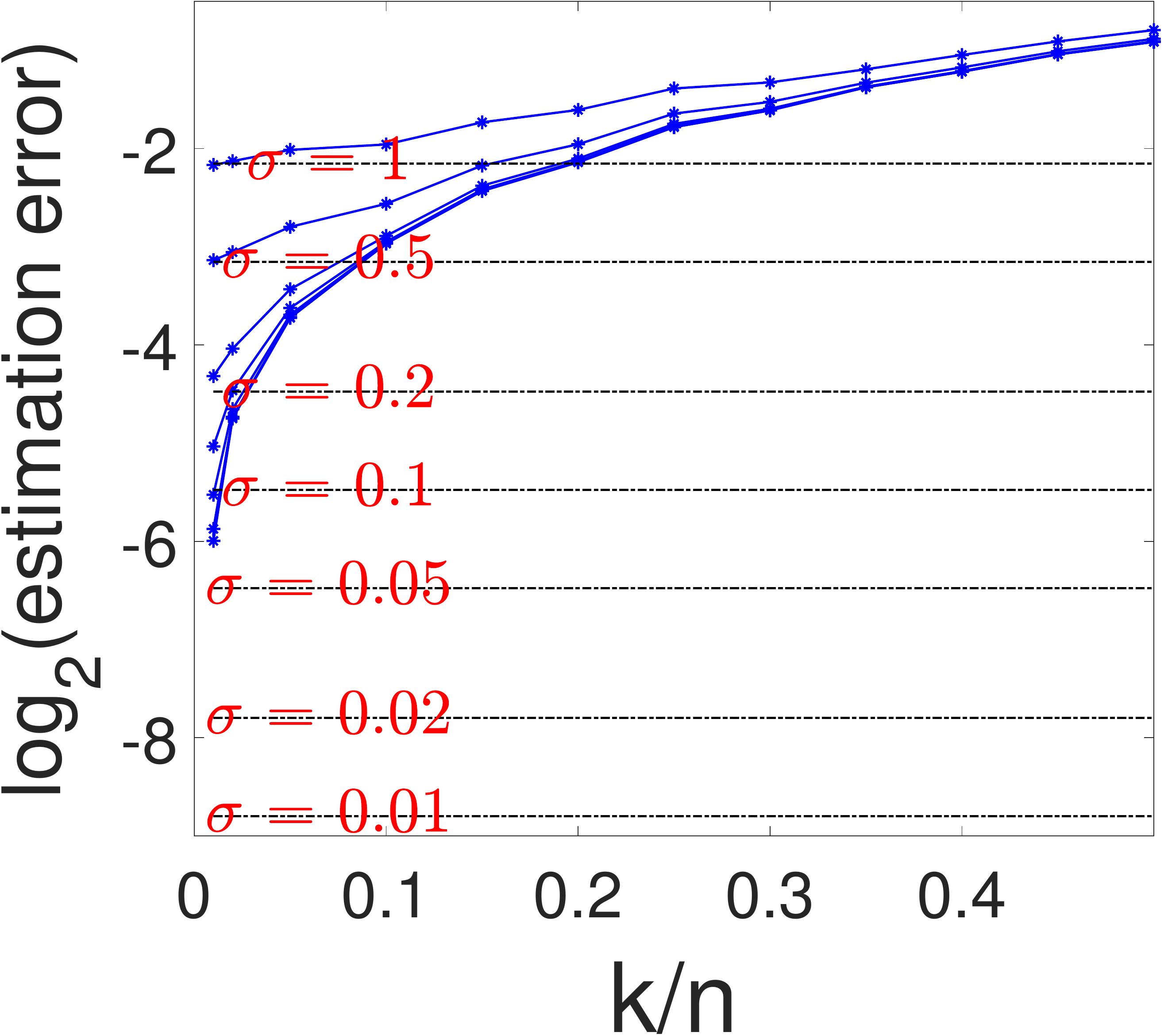} & \includegraphics[width = 0.32\textwidth]{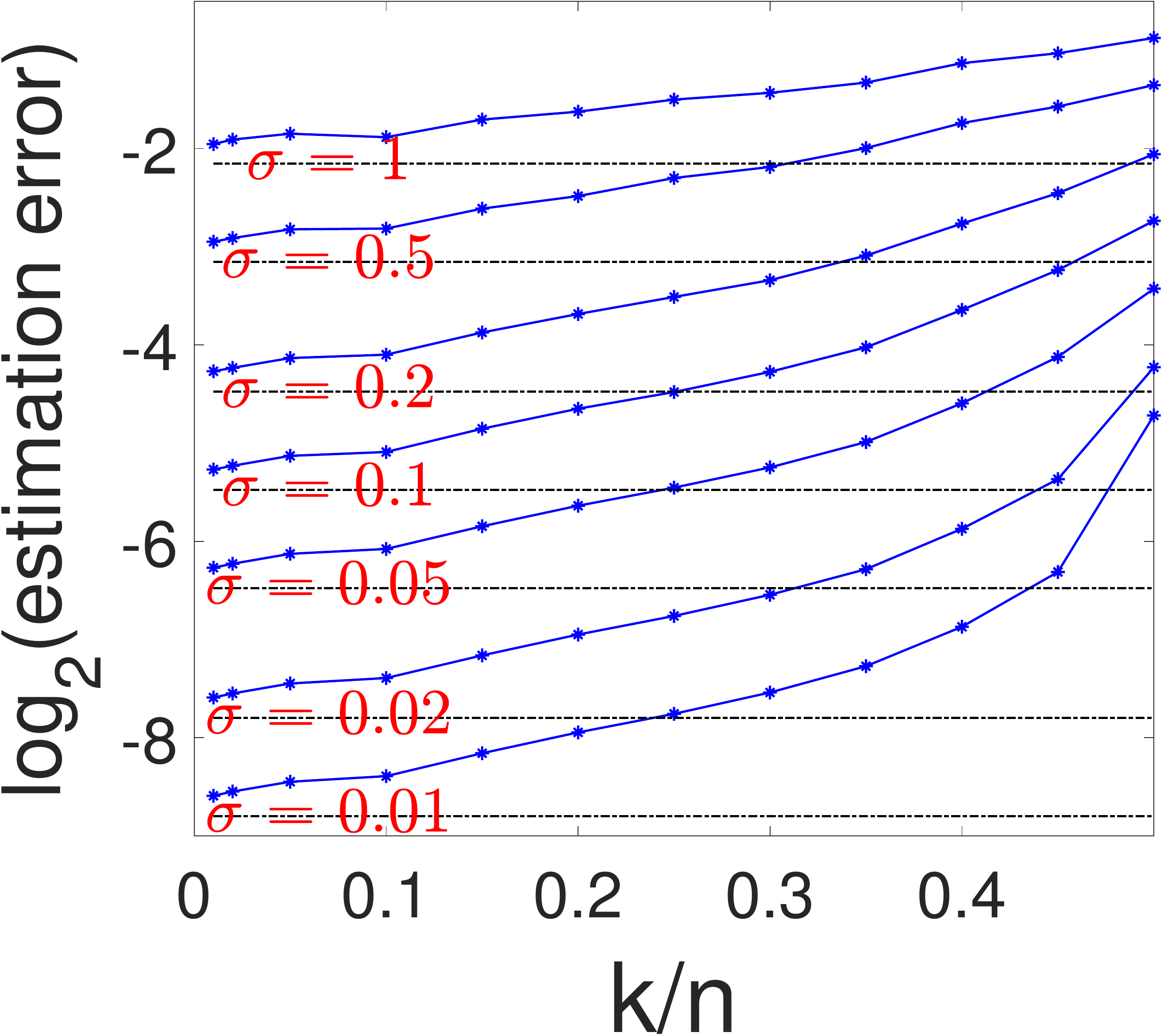} & \includegraphics[width = 0.32\textwidth]{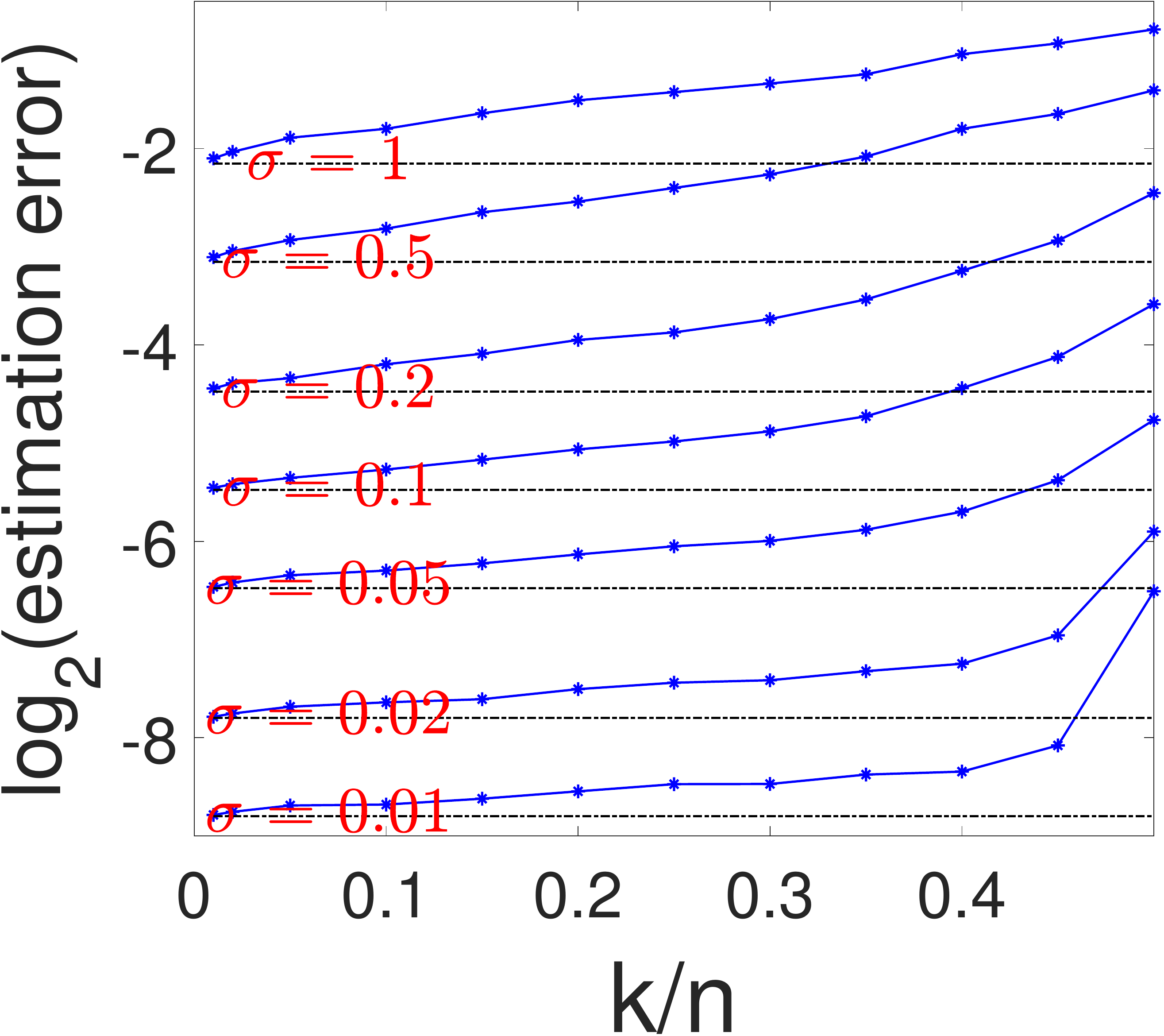} 
\end{tabular}
\caption{Comparison of the average $\ell_2$-errors ($\log_2$-scale) in recovering $\beta^*$ for the approaches (i) to (iii) described above. Each curve corresponds to a different value of the noise level $\sigma$. The black dashed lines correspond to the oracle (ordinary least squares with knowledge of $\Pi^*$).}\label{fig:simulations_1}
\end{center}
\end{figure}
Inspection of Figure \ref{fig:simulations_1} shows that the approach (ii) improves dramatically over the naive estimator (i) as long as the SNR is not too small (for $\sigma = 1$ and $\sigma = 0.5$, there is no longer a visible improvement). The results look promising in that the tolerable fraction of permuted observations can be as much as $0.5$ as long as the noise level is small; "tolerable" here refers to the fact that the estimation error increases gently with the fraction of permuted observations as opposed to (i) with a sharp increase in error as $k/n$ moves away from zero. Approach (iii) appears to yield further improvements over (ii) for large SNR. For small SNR, (ii) and (iii) are not distinguishable. This observation aligns well with Corollary \ref{corro:refitting}. 
\begin{figure}[h]
\begin{center}
\begin{tabular}{cc}
$\wh{\beta}$     &     $\wt{\beta}$         \\
\includegraphics[width = 0.4\textwidth]{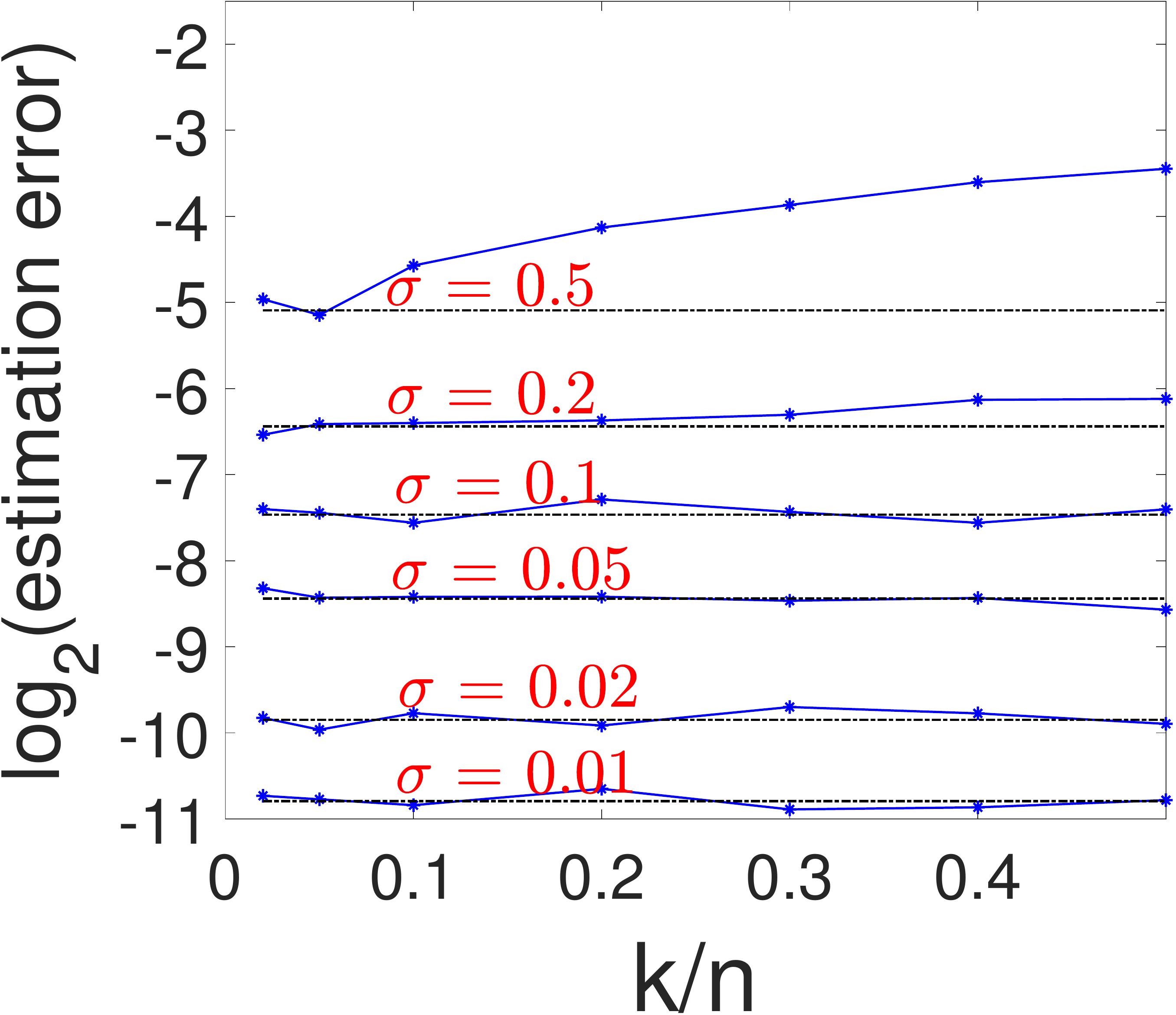} \hspace*{3ex} & \hspace*{3ex} \includegraphics[width = 0.4\textwidth]{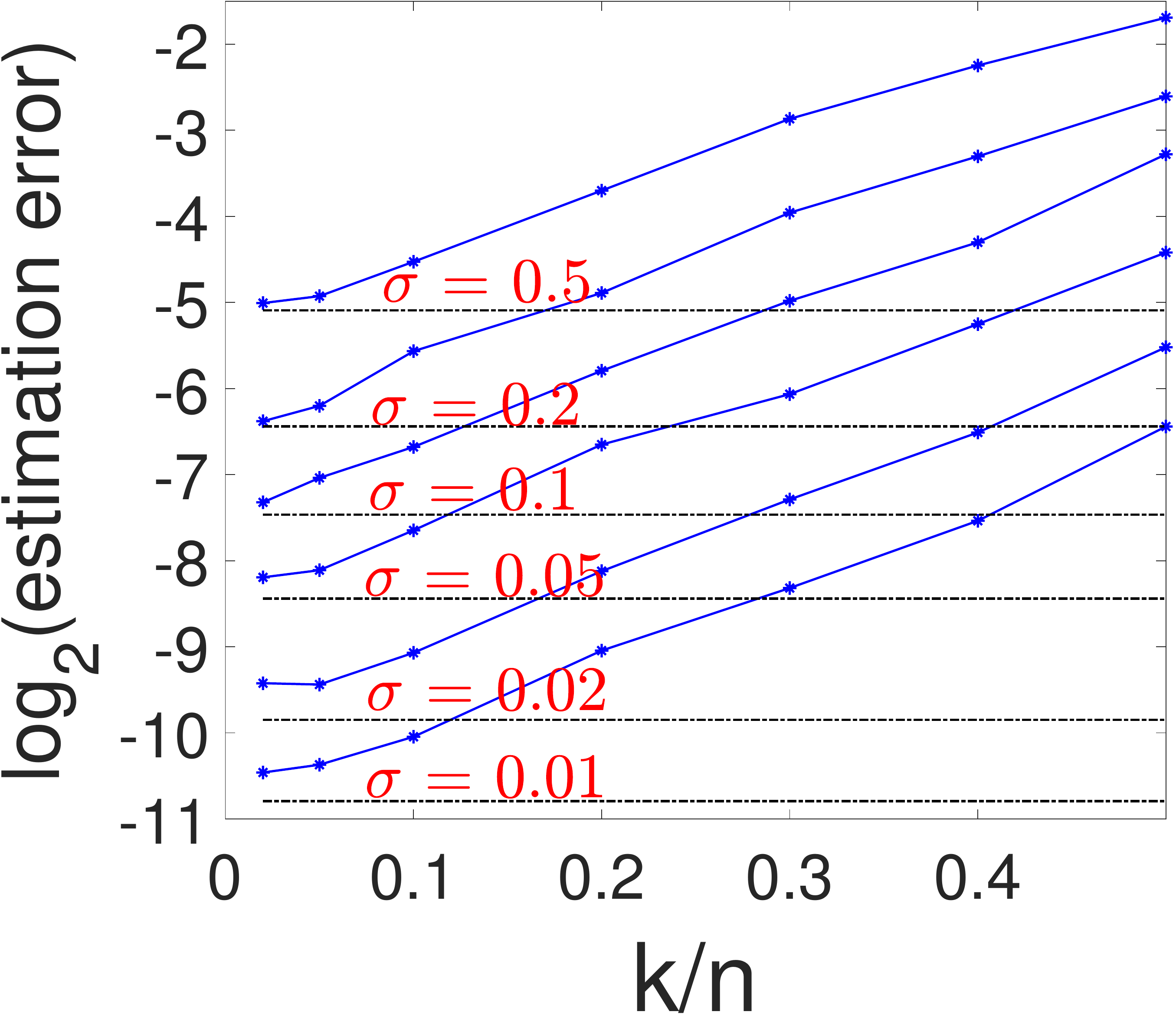} 
\end{tabular}
\caption{Comparison of the average $\ell_2$-errors ($\log_2$-scale) in recovering $\beta^*$ for the estimators $\wh{\beta}$ and $\wt{\beta}$ for $d = 1$. Each curve corresponds to a different value of the noise level $\sigma$. The black dashed lines correspond to the oracle (least squares with knowledge of $\Pi^*$).}\label{fig:simulations_2}
\end{center}   
\end{figure}
Figure \ref{fig:simulations_2} reveals that for $d = 1$, the estimator $\wh{\beta}$ performs substantially better than the convex formulation yielding $\wt{\beta}$. We observe that $\wh{\beta}$ is on par with the oracle for $\sigma < 0.2$, being hardly effected by an increase in $k/n$. That starts changing for $\sigma = 0.2$, while for $\sigma = 0.5$, the adverse effect of an increase in $k/n$ becomes clearly visible. At this point, it is not clear whether similar improvements of 
$\wh{\beta}$ over $\wt{\beta}$ would be observed for $d > 1$. The analysis in $\S$\ref{sec:mainresults} only provides (comparable) upper bounds for both approaches, and lower bounds would be needed to answer that question. The observation made from Figure \ref{fig:simulations_2} hence points towards an important open direction.      

\subsection{Real data}\label{sub:real}

The El Ni\~no data set in the UCI Machine Learning Repository \cite{UCI} contains oceanographic and surface meteorological readings taken from a series of buoys positioned throughout the equatorial Pacific. The data set consists of $n = 93,935$ records with the following attributes: buoy identifier, date, location (\texttt{latitude} and \texttt{longitude}), zonal and meridional wind speeds (\texttt{zon}  and \texttt{mer}), relative humidity (\texttt{humidity}), air temperature (\texttt{air.temp}), sea surface temperature and subsurface temperatures down to a depth of 500 meters (\texttt{s.s.temp}). The following linear regression model is considered:
\begin{align}\label{eq:elnino}
\begin{split}
\texttt{air.temp} & = \beta_0 + \beta_{\texttt{zon}} \cdot \texttt{zon} + \beta_{\texttt{mer}} \cdot \texttt{mer} +   \\
                  &\;\qquad + \beta_{\texttt{humidity}} \cdot \texttt{humidity}  +  \beta_{\texttt{s.s.temp}} \cdot \texttt{s.s.temp} + \epsilon.
\end{split}
\end{align}
The results of least squares regression 
indicate an excellent fit of this model with an R-squared of about $0.9$. With the goal to mimic the situation of data merging
based on non-unique identifiers, the data set is divided into two data sets A and B, with A containing the response variable and B containing the predictors. The variables \texttt{latitude} and \texttt{longitude} are maintained in both data sets, and used as quasi-identifiers for record linkage of A and B with the \texttt{R} package \texttt{fastLink} \cite{fastlink}. Since (\texttt{latitude}, \texttt{longitude})-pairs are generally associated with multiple observations, the linkage process is not free of mismatches. The merged data set in turn follows the format depicted in Figure \ref{fig:permutationmodel} with 
a permutation affecting the correspondence between responses and predictors. As shown in Table \ref{tab:relax}, least squares regression with the thus merged data set leads to an increase of the residual sum of squares by $27\%$. In addition, the estimates of the regression parameters of the linear model \eqref{eq:elnino} change noticeably. As an alternative, we consider the approach \eqref{eq:relax_2} studied in detail herein, where the parameter $\lambda$ is chosen as 
$\lambda = 2 c \wh{\sigma} / \sqrt{n}$ with $c = 1.345$ and $\wh{\sigma}$ being a robust estimator of the residual standard error. More specifically, we use the function \texttt{rlm} (short for ``robust linear model fitting'') in the \texttt{R} package \texttt{MASS} \cite{MASS} with its default arguments which is in exact correspondence to the above choice of $\lambda$, as follows from the connection between linear regression with the Huber loss and formulation \eqref{eq:relax_2} (cf., e.g., Proposition 3.1 in \cite{She2012}). The figures 
in Table \ref{tab:relax} indicate that this approach provides some remedy relative to the naive use of the least squares estimator based on the merged data set. The estimates of the regression parameters from \eqref{eq:relax_2} reduce the gap to those obtained with the original data set by about a factor of one half in terms of $\ell_2$-distance. Moreover, approach \eqref{eq:relax_2} also yields a better fit in terms of the mean squared prediction error on a collection of twenty hold-out sets obtained from leaving out different random subsets of 10\% of the observations; in each of those twenty runs, the hold-in data set is split and subsequently re-linked in the same manner as described above. The reduction in excess risk (relative to the least squares estimator) achieved by \eqref{eq:relax_2} in comparison to the naive least squares solution is again a factor of roughly one half.


\begin{table}
\begin{center}
\begin{tabular}{|l|c|c|c|c|c|c|c|c|}
\hline
                           &  {\footnotesize \texttt{interc}}  & {\footnotesize \texttt{zon}}  & {\footnotesize \texttt{mer}}   & {\footnotesize \texttt{humidity}}   & {\footnotesize \texttt{s.s.temp}}  & 
 {\small RMSE} & {\small $\ell_2$-dist.} & {\small $\emptyset$-hold-out error} \\[.35ex]
\hline
 &          &          &                      &                     &                    &                     & & 
\\[-1.5ex]
{\small $\wh{\beta}^{\text{oracle}}$} & {\small 5.15}  & {\small $-$.056}   &  {\small$-$.031} &  {\small$-$.022} &  {\small .844}   & {\small .509}  & 0& {\small .260 (4.6$\cdot10^{-3}$)}
\\[.7ex]
{\small $\wh{\beta}^{\text{ols}}$}    & {\small 6.72} & {\small $-$.037} &     {\small$-$.045}   &   {\small$-$.017}   &     {\small .774}  &  {\small .771}                    &  {\small 1.57} & {\small .276 (4.9$\cdot10^{-3}$)} \\[.7ex]

{\small $\wt{\beta}$}              &  {\small  5.74} & {\small $-$.044}   &   {\small$-$.037}   &       {\small $-$.016}     &    {\small .806}   & {\small .773}  &    {\small .59} & {\small.267 (4.8$\cdot10^{-3}$)}  \\    
\hline               
\end{tabular}
\end{center}
\vspace*{-0.2in}
\caption{Results of the real data analysis. Here, $\wh{\beta}^{\text{oracle}}$ denotes the least squares estimator based on the original data set, $\wh{\beta}^{\text{ols}}$ denotes the naive least squares estimator, and $\wt{\beta}$ denotes the estimator from formulation 
\eqref{eq:relax_2}. The columns labeled \texttt{zon}, $\ldots$, \texttt{s.s.temp} contain the regression parameter estimates for the respective variable; \texttt{interc} refers to the intercept. The three rightmost columns contain the root mean square errors (RMSEs) on the original ($\wh{\beta}^{\text{oracle}}$) respectively the merged data set ($\wh{\beta}^{\text{ols}}$,  $\wt{\beta}$),  the $\ell_2$-distance to $\wh{\beta}^{\text{oracle}}$, and the average hold-out errors (standard errors in parentheses), respectively.}\label{tab:relax}
\end{table}

\section{Conclusion}\label{sec:conclusion}
Regression problems in which the correspondence between responses and predictors has been lost can be traced back to classical work in statistics and naturally arises in the context of record linkage. Despite its long history, the problem has seen revived interest lately and has been analyzed through the lens of modern non-asymptotic statistical theory. An additional layer of complexity to be dealt with is the computational hardness of (quadratic) assignment problems. In the present paper, both aspects are taken into account under the assumption of sparsely permuted data. One central insight of our work is that the availability of an accurate estimator of $\beta^*$ may be helpful in circumventing the computational barrier associated with permutation recovery. On the other hand, preliminary numerical results for $d = 1$ indicate a gap in statistical performance between a presumably hard formulation capturing precisely the notion of a sparse permutation and a computationally convenient relaxation that introduces a loss of information. This observation deserves further investigation. 

\subsection*{Acknowledgments}

The second author would like to thank William Winkler for his guidance and Anindya Roy for helpful discussions related to this paper.



\bibliographystyle{siam}
{
\bibliography{references_M.bib}

\begin{thebibliography}{10}

\bibitem{CPLEX}
{\em {IBM ILOG CPLEX Optimization Studio}}.
\newblock \url{http://www.ibm.com/us-en/marketplace/ibm-ilog-cplex}.

\bibitem{UCI}
{\em {UCI Machine Learning Repository}}.
\newblock \url{https://archive.ics.uci.edu/ml/datasets.html}.

\bibitem{Abid2017}
{\sc A.~Abid, A.~Poon, and J.~Zou}, {\em {Linear Regression with Shuffled
  Labels}}.
\newblock arXiv:1705.01342, 2017.

\bibitem{AdlerTaylor2007}
{\sc R.~Adler and J.~Taylor}, {\em Random Fields and Geometry}, Springer, 2007.

\bibitem{Bai05}
{\sc Z.~Bai and T.~Hsing}, {\em The broken sample problem}, Probability Theory
  and Related Fields, 131 (2005), pp.~528--552.

\bibitem{Baraniuk2006}
{\sc R.~Baraniuk, M.~Davenport, R.~DeVore, and M.~Wakin}, {\em {A Simple Proof
  of the Restricted Isometry Property for Random Matrices}}, Constructive
  Approximation, 28 (2006), pp.~253--263.

\bibitem{Bertsimas2016}
{\sc D.~Bertsimas, A.~King, and R.~Mazumder}, {\em Best subset selection via a
  modern optimization lens}, The Annals of Statistics, 44 (2016), pp.~813--852.

\bibitem{Bertsimas2013}
{\sc D.~Bertsimas and R.~Mazumder}, {\em Least quantile regression via modern
  optimization}, The Annals of Statistics, 42 (2014), pp.~2494--2525.

\bibitem{Bickel2009}
{\sc P.~Bickel, Y.~Ritov, and A.~Tsybakov}, {\em {Simultaneous analysis of
  Lasso and Dantzig selector}}, The Annals of Statistics, 37 (2009),
  pp.~1705--1732.

\bibitem{Bohensky15}
{\sc M.~Bohensky}, {\em Methodological Developments in Data Linkage}, John
  Wiley \& Sons, Ltd, 2015, ch.~Bias in data linkage studies.

\bibitem{Boucheron2013}
{\sc S.~Boucheron, G.~Lugosi, and P.~Massart}, {\em {Concentration
  Inequalities: A Nonasymptotic Theory of Independence}}, Oxford University
  Press, 2013.

\bibitem{Burkard2009}
{\sc R.~Burkard, M.~Dell'Amico, and S.~Martello}, {\em Assignment Problems:
  Revised Reprint}, SIAM, 2009.

\bibitem{Cai2016}
{\sc T.~Cai, T.~Liang, and A.~Rakhlin}, {\em {Geometric inference for general
  high-dimensional linear inverse problems}}, The Annals of Statistics, 44
  (2016), pp.~1536--1563.

\bibitem{CandesTao2005}
{\sc E.~Candes and T.~Tao}, {\em {Decoding by Linear Programming}}, IEEE
  Transactions on Information Theory, 51 (2005), pp.~4203--4215.

\bibitem{Chan2001}
{\sc H.-P. Chan and W.-L. Loh}, {\em {A file linkage problem of DeGroot and
  Goel revisited}}, Statistica Sinica, 11 (2001), pp.~1031--1045.

\bibitem{Chandrasekaran2012}
{\sc V.~Chandrasekaran, B.~Recht, P.~Parrilo, and A.~Willsky}, {\em The convex
  geometry of linear inverse problems}, {Foundations of Computational
  Mathematics}, 12 (2012), pp.~805--849.

\bibitem{Collier2016}
{\sc O.~Collier and A.~Dalalyan}, {\em {Minimax Rates in Permutation Estimation
  for Feature Matching}}, Journal of Machine Learning Research, 17 (2016),
  pp.~1--31.

\bibitem{DasGupta2003}
{\sc S.~DasGupta}, {\em {An elementary proof of a theorem of Johnson and
  Lindenstrauss}}, Random Structures and Algorithms, 22 (2003), pp.~60--65.

\bibitem{David04}
{\sc P.~David, D.~Dementhon, R.~Duraiswami, and H.~Samet}, {\em Softposit:
  Simultaneous pose and correspondence determination}, Int. J. Comput. Vision,
  59 (2004), pp.~259--284.

\bibitem{DeGroot1971}
{\sc M.~DeGroot, P.~Feder, and P.~Goel}, {\em Matchmaking}, The Annals of
  Mathematical Statistics, 42 (1971), pp.~578--593.

\bibitem{DeGroot1976}
{\sc M.~DeGroot and P.~Goel}, {\em {The Matching Problem for Multivariate
  Normal Data}}, Sankhya, Series B, 38 (1976), pp.~14--29.

\bibitem{DeGroot1980}
\leavevmode\vrule height 2pt depth -1.6pt width 23pt, {\em Estimation of the
  correlation coefficient from a broken random sample}, The Annals of
  Statistics, 8 (1980), pp.~264--278.

\bibitem{fastlink}
{\sc E.~Enamorado, B.~Eifield, and K.~Imai}, {\em {Fast Probabilistic Record
  Linkage with Missing Data}}.
\newblock R-package, Version 0.2.0.

\bibitem{Fellegi69}
{\sc I.~P. Fellegi and A.~B. Sunter}, {\em A theory for record linkage},
  Journal of the American Statistical Association, 64 (1969), pp.~1183--1210.

\bibitem{Flammarion16}
{\sc N.~Flammarion, C.~Mao, and P.~Rigollet}, {\em Optimal rates of statistical
  seriation}.
\newblock arXiv:1607.02435, 2016.

\bibitem{FoygelMackey}
{\sc R.~Foygel and L.~Mackey}, {\em {Corrupted Sensing: Novel Guarantees for
  Separating Structured Signals}}, IEEE Transactions on Information Theory, 60
  (2014), pp.~1223--1247.

\bibitem{Gordon1988}
{\sc Y.~Gordon}, {\em On Milman's inequality and random subspaces which escape
  through a mesh in $\R^n$}, Springer Berlin Heidelberg, Berlin, Heidelberg,
  1988, pp.~84--106.

\bibitem{cvx}
{\sc M.~Grant and S.~Boyd}, {\em {CVX}: Matlab software for disciplined convex
  programming, version 2.1}.
\newblock \url{http://cvxr.com/cvx}, Mar. 2014.

\bibitem{Gutman13}
{\sc R.~Gutman, C.~Afendulis, and A.~Zaslavsky}, {\em {A Bayesian Procedure for
  File Linking to Analyze End-of-Life Medical Costs}}, Journal of the American
  Statistical Association, 108 (2013), pp.~34--47.

\bibitem{Hof12}
{\sc M.~H.~P. Hof and A.~H. Zwinderman}, {\em Methods for analyzing data from
  probabilistic linkage strategies based on partially identifying variables},
  Statistics in Medicine, 31 (2012), pp.~4231--4242.

\bibitem{Hsu2012}
{\sc D.~Hsu, S.~Kakade, and T.~Zhang}, {\em {A tail inequality for quadratic
  forms of sub-Gaussian random vectors}}, Electronic Communications in
  Probability, 52 (2012), pp.~1--6.

\bibitem{Hsu2017}
{\sc D.~Hsu, K.~Shi, and X.~Sun}, {\em {Linear regression without
  correspondence}}.
\newblock arXiv:1705.07048, 2017.

\bibitem{Huber1964}
{\sc P.~Huber}, {\em {Robust Estimation of a Location Parameter}}, The Annals
  of Mathematical Statistics, 53 (1964), pp.~73--101.

\bibitem{Kim12}
{\sc G.~Kim and R.~Chambers}, {\em Regression analysis under probabilistic
  multi-linkage}, Statistica Neerlandica, 66 (2012), pp.~64--79.

\bibitem{Lahiri05}
{\sc P.~Lahiri and M.~D. Larsen}, {\em Regression analysis with linked data},
  Journal of the American Statistical Association, 100 (2005), pp.~222--230.

\bibitem{Laska2009}
{\sc J.~N. Laska, M.~A. Davenport, and R.~G. Baraniuk}, {\em {Exact Signal
  Recovery from Sparsely Corrupted Measurements through the Pursuit of
  Justice}}, in Asilomar Conference on Signals, Systems and Computers, 2009,
  pp.~1556--1560.

\bibitem{Neter65}
{\sc J.~Neter, S.~Maynes, and R.~Ramanathan}, {\em The effect of mismatching on
  the measurement of response error}, Journal of the American Statistical
  Association, 60 (1965), pp.~1005--1027.

\bibitem{Nguyen2013}
{\sc N.~Nguyen and T.~Tran}, {\em {Robust Lasso with Missing and Grossly
  Corrupted Observations}}, IEEE Transactions on Information Theory, 59 (2013),
  pp.~2036--2058.

\bibitem{Pananjady2016}
{\sc A.~Pananjady, M.~Wainwright, and T.~Cortade}, {\em {Linear Regression with
  an Unknown Permutation: Statistical and Computational Limits}}, in
  Communication, Control and Computing (Allerton), 2016, pp.~417--424.

\bibitem{Pananjady2017}
\leavevmode\vrule height 2pt depth -1.6pt width 23pt, {\em {Denoising Linear
  Models with Permuted Data}}.
\newblock arXiv:1704.07461, 2017.

\bibitem{PlanVershynin2013a}
{\sc Y.~Plan and R.~Vershynin}, {\em {Robust 1-bit compressed sensing and
  sparse logistic regression: a convex programming approach}}, {IEEE
  Transactions on Information Theory}, 59 (2013), pp.~482--494.

\bibitem{PlanVershynin2015}
\leavevmode\vrule height 2pt depth -1.6pt width 23pt, {\em {The generalized
  Lasso with non-linear observations}}, {IEEE Transactions on Information
  Theory}, 62 (2016), pp.~1528--1537.

\bibitem{Poore06}
{\sc A.~B. Poore and S.~Gadaleta}, {\em Some assignment problems arising from
  multiple target tracking}, Math. Comput. Model., 43 (2006), pp.~1074--1091.

\bibitem{MASS}
{\sc B.~Ripley, B.~Venables, D.~Bates, K.~Hornik, A.~Gebhard, and D.~Firth},
  {\em {MASS: Support Functions and Datasets for Venables and Ripley's MASS}}.
\newblock R-package version 7.3.-47.

\bibitem{RudelsonVershynin2005}
{\sc M.~Rudelson and R.~Vershynin}, {\em Geometric approach to error correcting
  codes and reconstruction of signals}, International Mathematical Research
  Notices, 64 (2005), pp.~4019--4041.

\bibitem{Scheuren93}
{\sc F.~Scheuren and W.~Winkler}, {\em {Regression analysis of data files that
  are computer matched I}}, Survey Methodology, 19 (1993), pp.~39--58.

\bibitem{Scheuren97}
\leavevmode\vrule height 2pt depth -1.6pt width 23pt, {\em {Regression analysis
  of data files that are computer matched II}}, Survey Methodology, 23 (1997),
  pp.~157--165.

\bibitem{She2012}
{\sc Y.~She and A.~Owen}, {\em {Outlier Detection Using Nonconvex Penalized
  Regression}}, Journal of the American Statistical Association, 106 (2012),
  pp.~626--639.

\bibitem{Tancredi15}
{\sc A.~Tancredi and B.~Liseo}, {\em Regression analysis with linked data:
  problems and possible solutions}, Statistica, 75 (2015), pp.~19--35.

\bibitem{Unnikrishnan2015}
{\sc J.~Unnikrishnan, S.~Haghighatshoar, and M.~Vetterli}, {\em Unlabeled
  sensing with random linear measurements}.
\newblock arXiv:1512.00115, 2015.

\bibitem{Vershynin2010}
{\sc R.~Vershynin}, {\em {In: Compressed Sensing: Theory and Applications}},
  Cambridge University Press, 2012, ch.~{'Introduction to the non-asymptotic
  analysis of random matrices'}.

\bibitem{Winkler14}
{\sc W.~E. Winkler}, {\em Matching and record linkage}, Wiley Interdisciplinary
  Reviews: Computational Statistics, 6 (2014), pp.~313--325.

\bibitem{Wu1998}
{\sc Y.~N. Wu}, {\em A note on broken sample problem}, tech. rep., Department
  of Statistics, University of Michigan, 1998.

\bibitem{Zhou2009}
{\sc S.~Zhou}, {\em {Restricted Eigenvalue Conditions on Subgaussian Random
  Matrices}}.
\newblock arXiv:0912.4045.

\end{thebibliography}
}
\appendix
\section{Proof of Proposition \ref{prop:negative_result}}

Consider the least squares problem \eqref{eq:lsq} with $\y = \eps$:
\begin{equation*}
\min_{\Pi, \beta} \nnorm{\eps - \Pi X \beta}_2^2. 
\end{equation*} 
Omitting the term not depending on $\beta$, an equivalent problem is given by 
\begin{equation*}
\min_{\Pi, \beta} -2 \scp{\Pi \eps}{X \beta} + \nnorm{X \beta}_2^2. 
\end{equation*}
Re-parameterizing $X \beta = u \cdot r$, with $r \geq 0$ for $u \in \text{range}(X) \cap \mathbb{S}^{n-1}$, we obtain the optimization problem
\begin{equation*}
\min_{\Pi, u \in \text{range}(X) \cap \mathbb{S}^{n-1}, r \geq 0}  -2 \scp{\Pi \eps}{u} r + r^2. 
\end{equation*}
Observe that for any $u$ and $\Pi$, the optimal value of $r$ is found as 
\begin{equation*}
\wh{r}(u) = \max\{\scp{\Pi \eps}{u},0 \}. 
\end{equation*}
In particular, for a minimizer $(\wh{\Pi}, \wh{u})$,
\begin{equation*}
\wh{r} \coloneq \wh{r}(\wh{\Pi}, \wh{u}) = \nscp{\wh{\Pi} \eps}{\wh{u}}
\end{equation*}
so that $\wh{r} = \nnorm{X \wh{\beta}}_2$, and the optimal value of the objective is given by 
$-\wh{r}^2$. We hence have  
\begin{align*}
\wh{r} &= \max_{u \in \text{range}(X) \cap \mathbb{S}^{n-1}, \Pi} \nscp{\Pi \eps}{u} \\
       &\geq  \max_{1 \leq j \leq d, \Pi} \nscp{\Pi \eps}{X_j / \nnorm{X_j}_2} \\
       &\geq  \max_{\Pi} \nscp{\Pi \eps}{X_1} \frac{1}{\nnorm{X_1}_2}. 
\end{align*}
We now bound
\begin{equation*}
\max_{\Pi} \nscp{\Pi \eps}{X_1} \geq \nscp{\Pi_{\pm} \eps}{X_1},
\end{equation*}
where the permutation $\Pi_{\pm}$ is defined as follows. Define 
\begin{align*}
n_{\eps,+} = \{i:\; \epsilon_i > 0\}, \quad n_{\eps,-} = n - n_{\eps,+},  \quad n_{X_1,+} = \{i:\; X_{i1} > 0\}, \quad n_{X_1,-} = n - n_{X_1,+}.
\end{align*}
Let further 
\begin{equation}\label{eq:nplusnminus}
n_+ = \min\{n_{\eps,+},  n_{X_1,+} \},  \quad n_- = \min\{n_{\eps,-},  n_{X_1,-} \}, \quad 
n_{\pm} = n - n_+ - n_-. 
\end{equation}
Denote by 
\begin{alignat*}{2}
&i_1^+ < \ldots < i_{n_{X,+}}^+ \quad &&\text{the indices of the positive entries of $X_1$},\\
&j_1^+ < \ldots < j_{n_{\eps,+}}^+ \quad &&\text{the indices of the positive entries of $\eps$},\\
&i_1^- < \ldots < i_{n_{X,-}}^- \quad &&\text{the indices of the negative entries of $X_1$},\\
&j_1^- < \ldots < j_{n_{\eps,-}}^- \quad &&\text{the indices of the negative entries of $\eps$}.
\end{alignat*}
Furthermore, let 
\begin{align*}
&i_{1}^{\pm} < \ldots < i_{n_{\pm}}^{\pm} \\
&j_{1}^{\pm} < \ldots < j_{n_{\pm}}^{\pm} 
\end{align*}
be the indices in 
\begin{align*}
                    & \{1,\ldots,n\} \setminus (\{i_1^+,\ldots, i_{n_+}^+\} \cup \{i_1^-,\ldots, i_{n_-}^-\}) \\
\text{respectively}\;\, & \{1,\ldots,n\} \setminus (\{j_1^+,\ldots, j_{n_+}^+\} \cup \{j_1^-,\ldots, j_{n_-}^-\})
\end{align*}
in increasing order. We then construct $\Pi_{\pm}$ such that 
\begin{equation*}
\nscp{\Pi_{\pm} \eps}{X_1} = \sum_{k = 1}^{n_+} X_{1{i_k^+}} \epsilon_{j_k^+} + 
\sum_{k = 1}^{n_-} X_{1{i_k^-}} \epsilon_{j_k^-} + \sum_{k = 1}^{n_{\pm}} X_{1{i_k}^{\pm}} \; \epsilon_{{j_k}^{\pm}}. 
\end{equation*}
All terms inside the first two sums are i.i.d.~from the same distribution as $|g| |h|$, where $g$ and $h$ are independent
$N(0,1)$-random variables, and accordingly the terms inside the last summand are i.i.d.~from the same distribution as
$-|g||h|$. Conditional on $n_{\pm}$,  hence $\nscp{\Pi_{\pm} \eps}{X_1}$ follows the same distribution as 
\begin{equation*}
\sum_{i = 1}^{n - n_{\pm}} |g_i| |h_i|  - \sum_{i=n - n_{\pm} + 1}^{n} |g_i| |h_i|,
\end{equation*}
where $\{ g_i \}_{i = 1}^n$ and $\{ h_i \}_{i = 1}^n$ are two independent sequences of $n$ i.i.d.~random variables from the $N(0,1)$-distribution. The expectation of the above expression (conditional on $n_{\pm}$)
is given by 
\begin{equation*}
(n - 2 n_{\pm}) \E[|g||h|] = (n - 2 n_{\pm}) \E[|g|]^2 = (n - 2 n_{\pm}) \frac{2}{\pi}. 
\end{equation*}
From Proposition 5.16 in \cite{Vershynin2010}, we have the following concentration inequality: 
\begin{equation*}  
\p \left(\left|\sum_{i = 1}^{n - n_{\pm}} (|g_i| |h_i| - \E[|g||h|]) - \sum_{i=n - n_{\pm} + 1}^{n} (|g_i| |h_i| - \E[|g||h|]) \right| \geq 
(n - 2 n_{\pm}) \frac{1}{\pi} \Big| n_{\pm} \right) \leq \exp(-n c).
\end{equation*}
Hence, conditional on $n_{\pm}$, with probability at least $1 - \exp(-nc)$, 
\begin{equation*}
\sum_{i = 1}^{n - n_{\pm}} |g_i| |h_i|  - \sum_{i=n - n_{\pm} + 1}^{n} |g_i| |h_i|  \geq \frac{1}{\pi} (n - 2 n_{\pm}). 
\end{equation*}
Consider the definition of $n_+$ and $n_-$ Observe that $n_{\eps, +}$, $n_{X_1, +}$, $n_{\eps, -}$ and $n_{X_1, -}$ are each binomial random variables with $n$ trials and probability of success $1/2$. By repeated application of Hoeffding's inequality, we hence have 
\begin{equation*} 
\p(n_{+} \leq 3n/8) \leq 2\exp(-n/32), \quad \p(n_{-} \leq 3n/8) \leq 2 \exp(-n/32).
\end{equation*}
Conditional on the events $\{ n_{+} > 3n/8\}$ and $\{ n_{-} > 3n/8\}$, we have $n_{\pm} \leq 1/4$ and thus with probability at least $1 - C \exp(-nc)$
\begin{equation*}
\sum_{i = 1}^{n - n_{\pm}} |g_i| |h_i|  - \sum_{i=n - n_{\pm} + 1}^{n} |g_i| |h_i|  \geq \frac{n}{2\pi} . 
\end{equation*}
Accordingly, the same applies to the event $\{ \nscp{\Pi_{\pm} \eps}{X_1} > \frac{n}{2\pi} \}$. 
Lemma \ref{lem:normconcentration} in Appendix \ref{app:aux} yields $\nnorm{X_1}_2 \leq 2 \sqrt{n}$ with probability at least $1 - \exp(-n/2)$. Putting together the pieces, it follows that with probability at least $1 - C \exp(-nc)$
\begin{equation*}
\nnorm{X \wh{\beta}}_2 = \wh{r} \geq \frac{\sqrt{n}}{4\pi} \quad \Rightarrow \; \nnorm{\wh{\beta}}_2 \geq  \frac{\sqrt{n}}{\sigma_{\max}(X) 4\pi},
\end{equation*}   
where $\sigma_{\max}(X)$ denotes the maximum singular value of $X$. Finally, Lemma \ref{lem:extremesingularvalues} in Appendix \ref{app:aux} yields that $\sigma_{\max}(X) \leq \sqrt{2n} + \sqrt{d}$ with probability at least $1 - \exp(-c n)$, which concludes the proof.

\section{Proof of Theorem \ref{theo:constrained_permutation}}\label{app:theo:constrained_permutation}

We start with a basic, yet important observation that allows us to decouple $\wh{\Pi}$ and $\wh{\beta}$. Let
$\pr_X$ and $\pr_X^{\perp}$ denote the orthogonal projection on $\text{range}(X)$ respectively its orthogonal complement.

\begin{lemma}\label{lem:decoupling} Consider optimization problem \eqref{eq:lsq}. Then, if $n \geq  d$, with probability one 
\begin{equation}\label{eq:decomposition}  
  \wh{\Pi}^{\T} \in \argmin_{\Pi \in \mc{P}_{n,k}} \nnorm{\pre_X^{\perp} \Pi \mathbf{y}}_2^2, \qquad \wh{\beta} \in \left\{ \left(\frac{X^{\T} X}{n} \right)^{-1} \frac{X^{\T} \wh{\Pi}^{\T} \y}{n},
  \; \, \wh{\Pi}^{\T} \in \argmin_{\Pi \in \mc{P}_{n,k}} \nnorm{\pre_X^{\perp} \Pi \mathbf{y}}_2^2 \right \}.  
\end{equation}
\end{lemma}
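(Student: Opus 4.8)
The plan is to substitute the re-parameterization $X\beta = \pr_X \eta$ for $\eta \in \R^n$ into the objective of \eqref{eq:lsq} and exploit the orthogonal decomposition $\R^n = \text{range}(X) \oplus \text{range}(X)^\perp$. Writing $\nnorm{\Pi X\beta - \y}_2^2$ and noting that for fixed $\Pi$ the inner minimization over $\beta$ is an ordinary least squares problem, I would first argue that for any fixed $\Pi \in \mc{P}_{n,k}$,
\begin{equation*}
\min_{\beta \in \R^d} \nnorm{\y - \Pi X \beta}_2^2 = \min_{\beta \in \R^d} \nnorm{\Pi^{\T}\y - X\beta}_2^2 = \nnorm{\pr_X^{\perp} \Pi^{\T} \y}_2^2,
\end{equation*}
using that $\Pi$ is orthogonal in the first equality and the Pythagorean identity for the least squares residual in the second. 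The minimizing $\beta$ for fixed $\Pi$ is $\beta(\Pi) = (X^\T X)^{-1} X^\T \Pi^\T \y / 1$, well-defined almost surely since $X^\T X$ is invertible with probability one when $n \geq d$ under assumption $(\textbf{G})$ (the Gaussian design has full column rank a.s.).

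The second step is to minimize the resulting profiled objective over $\Pi \in \mc{P}_{n,k}$. Since $\mc{P}_{n,k}$ is closed under transposition (a permutation moves at most $k$ indices iff its inverse does), minimizing $\nnorm{\pr_X^{\perp}\Pi^{\T}\y}_2^2$ over $\Pi \in \mc{P}_{n,k}$ is the same as minimizing $\nnorm{\pr_X^{\perp}\Pi\y}_2^2$ over $\Pi \in \mc{P}_{n,k}$, which is the reindexing that produces the statement: $\wh{\Pi}^{\T}$ is a minimizer of $\Pi \mapsto \nnorm{\pr_X^{\perp}\Pi\y}_2^2$. Plugging $\wh{\Pi}$ back gives $\wh{\beta} = (X^\T X / n)^{-1} X^\T \wh{\Pi}^\T \y / n$, and since there may be several minimizing permutations, $\wh{\beta}$ lies in the indicated set indexed over all such minimizers. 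I would be slightly careful to phrase this as a set membership rather than equality precisely because the argmin over the finite set $\mc{P}_{n,k}$ need not be unique.

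There is not really a hard part here; the only thing to watch is the almost-sure invertibility of $X^\T X$ (hence the ``with probability one'' qualifier and the hypothesis $n \geq d$), and being careful about the direction of the transpose when passing between $\min_\beta \nnorm{\y - \Pi X\beta}$ and a projection statement — it is $\Pi^\T\y$, not $\Pi\y$, that gets projected, and then the transpose-closedness of $\mc{P}_{n,k}$ converts the $\argmin$ over $\Pi^\T$ into an $\argmin$ over $\Pi$. I would present the argument in the order: (1) reduce to OLS in $\beta$ for fixed $\Pi$ and identify the residual with $\nnorm{\pr_X^\perp \Pi^\T \y}_2^2$; (2) use symmetry of $\mc{P}_{n,k}$ under transposition to rewrite the argmin; (3) read off $\wh{\beta}$ from the normal equations, noting possible non-uniqueness. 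The whole proof is short, essentially two displays plus the symmetry remark.
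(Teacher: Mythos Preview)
Your proposal is correct and follows essentially the same route as the paper: profile out $\beta$ via ordinary least squares, identify the residual with $\nnorm{\pr_X^{\perp}\Pi^{\T}\y}_2^2$, then minimize over $\Pi$. The only cosmetic difference is that the paper performs the relabeling $\Pi \leftrightarrow \Pi^{\T}$ implicitly in its first displayed equality $\min_{\Pi,\beta}\nnorm{\Pi X\beta - \y}_2^2 = \min_{\Pi,\beta}\nnorm{X\beta - \Pi\y}_2^2$, whereas you make the closure of $\mc{P}_{n,k}$ under transposition explicit; your version is arguably cleaner on that point.
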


\begin{proof} We have 
\begin{align*}
  \min_{\Pi, \beta} \|\Pi X\beta-\y\|_2^2 &= \min_{\Pi, \beta} \|X\beta-\Pi \y\|_2^2  \\
                                        &=  \min_{\Pi, \beta} \left\{ \|\pr_X^{\perp} \Pi \y\|_2^2   +  \|X\beta- \pr_X \Pi \y\|_2^2 \right\}. 
\end{align*}
The second part can always be made equal to zero by choosing $\beta$ such that $X \beta = \pr_X \Pi \y$. It hence suffices to
minimize the first part w.r.t.~$\Pi$ and to back-substitute the result into the second part. Under model (\text{\textbf{G}}), $X$ is
non-singular with probability one as long as $n \geq d$ which yields the second expression in \eqref{eq:decomposition}. 
\end{proof}
\noindent In the next lemma, we bound the $\ell_2$-distance between $\Pi^{*\T} \y$ and $\wh{\Pi}^{\T} \y$. 
\begin{lemma}\label{lem:bound_pi} Under the conditions of Theorem \ref{theo:constrained_permutation}, the following holds
  with probability at least $1 - \frac{7}{2} \exp \left(-\frac{\psi(\epss, d/n) \nu_n^2}{8} \right) - \exp(-\{ w(\mc{T}) \vee \log n\})$,
  where $\psi(\epss, d/n)$ is defined in \eqref{eq:lowerbound} below. 
\begin{equation*} 
\nnorm{\wh{\Pi}^{\T} \y - \Pi^{*\T} \y}_2 \leq  \frac{2 (1 + \sqrt{2})\sigma \epss^{-2} \{w(\mc{T}) \vee \log n\}}{\sqrt{n}}.
\end{equation*}
\end{lemma}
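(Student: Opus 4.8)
The plan is to combine the decoupling identity of Lemma~\ref{lem:decoupling} with Gordon's ``Escape Through a Mesh'' theorem. Set $v \coloneq \wh{\Pi}^{\T}\y - \Pi^{*\T}\y$, and observe first that $v/\nnorm{v}_2 \in \mc{T}$ whenever $v \neq 0$: indeed $v \in \text{range}(\wh{\Pi}^{\T} - \Pi^{*\T})$, and $\wh{\Pi}^{\T} \in \mc{P}_{n,k}$ because the Hamming ball around $I_n$ is invariant under transposition. Since $\Pi^{*\T}\y = X\beta^* + \eps$ and $\pr_X^{\perp}$ annihilates $\text{range}(X)$, the optimality of $\wh{\Pi}^{\T}$ in $\argmin_{\Pi\in\mc{P}_{n,k}}\nnorm{\pr_X^{\perp}\Pi\y}_2^2$ (Lemma~\ref{lem:decoupling}) gives
\begin{equation*}
\nnorm{\pr_X^{\perp}\eps + \pr_X^{\perp}v}_2^2 = \nnorm{\pr_X^{\perp}\wh{\Pi}^{\T}\y}_2^2 \leq \nnorm{\pr_X^{\perp}\Pi^{*\T}\y}_2^2 = \nnorm{\pr_X^{\perp}\eps}_2^2 ,
\end{equation*}
and expanding the square, using that $\pr_X^{\perp}$ is a symmetric idempotent, yields the basic inequality $\nnorm{\pr_X^{\perp}v}_2^2 \leq 2\,|\scp{\pr_X^{\perp}\eps}{v}|$. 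The rest of the argument bounds the left-hand side from below and the right-hand side from above, uniformly over the cone generated by $\mc{T}$.

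For the left-hand side, $\nnorm{\pr_X^{\perp}v}_2 = \text{dist}(v,\text{range}(X))$, so I would apply Gordon's theorem to the $d$-dimensional random subspace $\text{range}(X)$ and the mesh $\mc{T}$: with probability at least $1-\tfrac{7}{2}\exp(-\psi(\epss,d/n)\nu_n^2/8)$ one obtains $\inf_{t\in\mc{T}}\text{dist}(t,\text{range}(X)) \geq (\nu_{n-d} - w(\mc{T}) - s)/\sqrt{n}$ for a free fluctuation parameter $s>0$ with probability cost $\exp(-s^2/2)$. Condition~\eqref{eq:cond_samples_unrelaxed}, rewritten as $(1-\epss)\nu_{n-d} - \epss\nu_n \geq 2\,w(\mc{T})$, is precisely what guarantees $\nu_{n-d} - w(\mc{T}) \geq \tfrac{\epss\nu_n + (1+\epss)w(\mc{T})}{1-\epss}$, which (using $\nu_n \geq n/\sqrt{n+1}$ and the hypothesis $n > 9 \vee 4d$) exceeds $\epss\sqrt{n}$ by a strictly positive margin; the natural choice $s = \nu_{n-d} - w(\mc{T}) - \epss\sqrt{n}$ then leaves exactly $\nnorm{\pr_X^{\perp}v}_2 \geq \epss\,\nnorm{v}_2$, and fixes $\psi(\epss,d/n) = 4 s^2/\nu_n^2$, the quantity appearing in the stated probability. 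For the right-hand side, conditionally on $X$ the vector $\pr_X^{\perp}\eps$ is $N(0,\sigma^2\pr_X^{\perp})$, so $|\scp{\pr_X^{\perp}\eps}{v}| \leq \nnorm{v}_2\,\sup_{t\in\mc{T}}|\scp{\pr_X^{\perp}\eps}{t}|$; this supremum has mean $\sigma\,w(\pr_X^{\perp}\mc{T}) \leq \sigma\,w(\mc{T})$ (a projection cannot increase the Gaussian width, by Sudakov--Fernique), and it is a $1$-Lipschitz function of $\eps/\sigma$, so Gaussian concentration with deviation $\sigma\sqrt{2(w(\mc{T})\vee\log n)}$ gives, outside an event of probability $\exp(-\{w(\mc{T})\vee\log n\})$, the estimate $\sup_{t\in\mc{T}}|\scp{\pr_X^{\perp}\eps}{t}| \leq (1+\sqrt{2})\,\sigma\{w(\mc{T})\vee\log n\}$, where one uses $\sqrt{W}\leq W$ for $W = w(\mc{T})\vee\log n \geq 1$ together with the width bound~\eqref{eq:bound_width_T}.

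Feeding both bounds into the basic inequality gives $\epss^2\nnorm{v}_2^2 \leq 2(1+\sqrt{2})\,\sigma\{w(\mc{T})\vee\log n\}\,\nnorm{v}_2$; dividing by $\nnorm{v}_2$ (the case $v=0$ being trivial) produces the asserted bound, and a union bound over the Gordon event and the concentration event delivers the stated failure probability. I expect the left-hand side to be the main obstacle: extracting a clean constant-factor lower bound $\nnorm{\pr_X^{\perp}v}_2 \geq \epss\,\nnorm{v}_2$ from Gordon's theorem requires care about the normalization (the factor $1/\sqrt{n}$ linking $\text{dist}(\cdot,\text{range}(X))$ to the Gaussian-matrix form of the theorem, and the comparison $\nu_{n-d}\approx\sqrt{n-d}$) and about exactly how much of the slack in \eqref{eq:cond_samples_unrelaxed} must be reserved for $s$ — this bookkeeping is what determines $\psi(\epss,d/n)$. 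The right-hand side, by contrast, is a routine Gaussian-concentration computation.
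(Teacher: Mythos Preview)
Your strategy is the paper's: derive the basic inequality $\nnorm{\pr_X^{\perp}v}_2^2 \leq 2\,|\scp{\pr_X^{\perp}\eps}{v}|$ from optimality of $\wh{\Pi}^{\T}$ and feasibility of $\Pi^{*\T}$, lower-bound the left side via Gordon's escape theorem applied to $\mc{T}$, upper-bound the right side via Sudakov--Fernique plus Gaussian concentration (conditioning on $X$), and combine. The right-hand-side step is handled exactly as in the paper.

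The only divergence is in the mechanics of the Gordon step. The paper invokes its Lemma~\ref{lem:escape} directly in the form ``$\p(\text{dist}(\mc{T},\text{range}(X)) > \epss) \geq 1 - \tfrac{7}{2}\exp(\cdots)$'', so $\nnorm{\pr_X^{\perp}\wh{\Delta}}_2 \geq \epss$ falls out immediately, and $\psi(\epss,d/n)$ is obtained by simplifying the exponent of that lemma under condition~\eqref{eq:cond_samples_unrelaxed}; there is no auxiliary fluctuation parameter $s$ and no $1/\sqrt{n}$ normalization to track. You instead appeal to a Gaussian-matrix form of Gordon with a free parameter $s$ and then back out $\psi$ from your choice of $s$. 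That works in principle, but your formula $\psi = 4s^2/\nu_n^2$ will not coincide with the paper's explicit
\[
\psi(\epss,d/n) = \left(\frac{(1-\epss)\,\nu_{n-d}/\nu_n - \epss}{3 + \epss + \epss^2/(1-\epss)}\right)^2,
\]
and the $1/\sqrt{n}$ you carry around is an artifact of the matrix formulation that the paper's Grassmannian version avoids entirely. Since the lemma statement fixes $\psi$ by reference to~\eqref{eq:lowerbound}, you should apply Lemma~\ref{lem:escape} as stated rather than rederive it.
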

\begin{proof} In view of Lemma \ref{lem:decoupling}, by definition of $\wh{\Pi}^{\T}$ and the fact that $\Pi^{*\T}$ is a feasible solution, we have
 \begin{align*} 
&\nnorm{\pr_X^{\perp}\wh{\Pi}^{\T} \y}_2^2  \leq  \nnorm{\pr_X^{\perp}\Pi^{*\T} \y}_2^2 \\
  \Rightarrow \quad & \nnorm{\pr_X^{\perp}(\wh{\Pi}^{\T}  - \Pi^{*\T}) \y + \pr_X^{\perp}\Pi^{*\T} \y }_2^2 \leq \nnorm{\pr_X^{\perp}\Pi^{*\T} \y}_2^2 \\
  \Rightarrow \quad  & \nnorm{\pr_X^{\perp}(\wh{\Pi}^{\T}  - \Pi^{*\T}) \y}_2^2 \leq 2 |\nscp{\pr_X^{\perp}\Pi^{*\T} \y}{(\wh{\Pi}^{\T}  - \Pi^{*\T}) \y}|. 
 \end{align*}
Since
\begin{equation*}
\pr_X^{\perp} \Pi^{*\T} \y = \pr_X^{\perp} (X \beta^* + \eps) = \pr_X^{\perp}\eps,
\end{equation*}
we have the implication that
\begin{equation*}
  \nnorm{\pr_X^{\perp}(\wh{\Pi}^{\T}  - \Pi^{*\T}) \y}_2^2 \leq 2 |\nscp{\pr_X^{\perp}\eps}{(\wh{\Pi}^{\T}  - \Pi^{*\T}) \y}|.
\end{equation*}
Dividing both sides by $\nnorm{\pr_X^{\perp}(\wh{\Pi}^{\T}  - \Pi^{*\T}) \y}_2$, we obtain that
\begin{align}
  &\nnorm{(\wh{\Pi}^{\T}  - \Pi^{*\T}) \y}_2^2 \nnorm{\pr_X^{\perp} \wh{\Delta}}_2^2 \leq 2 |\nscp{\pr_X^{\perp}\eps}{\wh{\Delta}}| \nnorm{(\wh{\Pi}^{\T}  - \Pi^{*\T}) \y}_2, \qquad \wh{\Delta} \coloneq
  \frac{(\wh{\Pi}^{\T}  - \Pi^{*\T}) \y}{\nnorm{(\wh{\Pi}^{\T}  - \Pi^{*\T}) \y}_2} \notag \\
\Rightarrow \quad &\nnorm{(\wh{\Pi}^{\T}  - \Pi^{*\T}) \y}_2  \nnorm{\pr_X^{\perp}\wh{\Delta}}_2^2  \leq 2 |\nscp{\pr_X^{\perp}\eps}{\wh{\Delta}}| \label{eq:Pi_apply_Gordon}.  
\end{align}
\emph{i) Lower bounding the left hand side of \eqref{eq:Pi_apply_Gordon}.}\\
Observe that $\wh{\Delta} \in \mc{T}$ as defined in \eqref{eq:setT}. Moreover, note that under (\text{\textbf{G}}), the random subspace
$\text{range}(X) \subseteq \R^n$ follows the uniform distribution on the Grassmannian $\textsf{G}(n,d)$. Hence, applying Lemma \ref{lem:escape} with $K = \mc{T}$,
$V = \text{range}(X)$ and thus $m = n$ and $l = n-d$, we have for any $\epss \in (0,1/2)$ in compliance with \eqref{eq:cond_samples_unrelaxed}    
\begin{align}\label{eq:lowerbound}
  \begin{split}
   \p(\text{dist}(\mc{T}, \text{range}(X)) > \epss)
   &= \p \left(\min_{\Delta \in \mc{T}} \nnorm{\pr_X^{\perp} \Delta}_2 > \epss \right) \\
   &\geq 1 - \frac{7}{2} \exp \left(-\frac{1}{2} \left( \frac{(1 - \epss)\nu_{n-d} - \epss \nu_n  - w(\mc{T})}{3 + \epss + \epss \nu_n / \nu_{n-d}} \right)^2 \right) \\
   &\geq 1 - \frac{7}{2} \exp \left(-\frac{\psi(\epss, d/n) \nu_n^2}{8} \right), \quad
   \psi(\epss, d/n) \coloneq \left( \frac{(1- \epss) \frac{\nu_{n-d}}{\nu_n} - \epss}{3 + \epss + \frac{\epss^2}{1-\epss}} \right)^2,
 \end{split}
\end{align}
where the last inequality follows from condition \eqref{eq:cond_samples_unrelaxed}. Conditional on the event $\{ \nnorm{\pr_X^{\perp} \Delta}_2 \geq \epss \}$, which holds with the probability as stated, the left hand side of \eqref{eq:Pi_apply_Gordon} is lower bounded by $\epss^2 \nnorm{(\wh{\Pi}^{\T}  - \Pi^{*\T}) \y}_2$.  
\vskip1ex
\noindent \emph{ii) Upper bounding the right hand side of \eqref{eq:Pi_apply_Gordon}.}\\
Consider $t > 0$ arbitrary, to be chosen later.  We have
\begin{align*}
\p(|\nscp{\pr_X^{\perp}\eps}{\wh{\Delta}}| \geq t) \leq \p \left(\sup_{\Delta \in \mc{T}} |\nscp{\pr_X^{\perp}\eps}{\Delta}| > t \right). 
\end{align*}
\noindent Now note that $\eps$ and $\pr_X^{\perp}$ are independent. As a result, when conditioning on $\pr_X^{\perp}$, we have from Lemma \ref{lem:concentration_width}
that
\begin{align*}
  \p \left(\sup_{\Delta \in \mc{T}} |\nscp{\pr_X^{\perp}\eps}{\Delta}| > t \;\;\Big| \pr_X^{\perp} \right) &\leq
  \p \left(\sup_{\Delta \in \mc{T}} |\nscp{\pr_X^{\perp}\eps}{\Delta}| > \E\left[\sup_{\Delta \in \mc{T}}  |\nscp{\pr_X^{\perp}\eps}{\Delta}| \;\; \Big|  \pr_X^{\perp} \right] +  \tau \;\;\Big| \pr_X^{\perp} \right) \\
                                                                                                           &\leq \p \left(\sup_{\Delta \in \mc{T}} |\nscp{\pr_X^{\perp}\eps}{\Delta}| > \sigma  w(\mc{T})  +  \tau \;\;\Big| \pr_X^{\perp} \right) \\
                                                                                                           &\leq \exp(-\tau^2 / (2 \sigma^2)),
                                                                                                             \quad \tau \coloneq t - \E\left[\sup_{\Delta \in \mc{T}}  |\nscp{\pr_X^{\perp}\eps}{\Delta}| \;\; \Big|  \pr_X^{\perp} \right] \\
&\qquad \qquad \qquad \qquad \qquad \;\; \geq  t - \sigma w(\mc{T}). 
\end{align*}
The second equality is a consequence of the Sudakov-Fernique comparison inequality (Theorem 2.2.3 in \cite{AdlerTaylor2007}) and the fact that
$\nnorm{\pr_X^{\perp}}_2 \leq 1$. Choosing $t = (1 + \sqrt{2}) \sigma \{w(\mc{T}) \vee \log n \}$, combining this result with i), and putting together the pieces in \eqref{eq:Pi_apply_Gordon}, the assertion of the lemma follows.   
\end{proof}
\noindent In order to complete the proof of Theorem \ref{theo:constrained_permutation}, we need one last lemma whose proof is deferred to Appendix \ref{app:Xteps}.

\begin{lemma}\label{lem:Xteps} Let $\sigma_{\min}(\cdot)$ denote the minimum singular value of a matrix. If $n \geq 9 \vee 4d$, it holds that
\begin{align*}
  &\p \left( \left \{\sigma_{\min}(X / \sqrt{n}) < 1 - \sqrt{\frac{4 d \vee \log n}{n}} \right \}   \cup \left\{ \norm{ \left(\frac{X^{\T} X}{n} \right)^{-1} \frac{X^{\T} \eps}{\sqrt{n}}}_2 \geq \sigma \frac{\sqrt{5 (d \vee \log n)} \}}{1 - \sqrt{\frac{4 d \vee \log n}{n}}} \right \} \right)\\
  &\leq 2 \exp \left(-\frac{1}{2} (d \vee \log n) \right).  
\end{align*}
\end{lemma}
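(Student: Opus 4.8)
The proof will decompose the bad event into a ``design'' part $\mc{E}_1 = \{\sigma_{\min}(X/\sqrt n) < 1 - \sqrt{(4d\vee\log n)/n}\}$ and a ``noise'' part $\mc{E}_2 = \{\nnorm{(X^\T X/n)^{-1} X^\T\eps/\sqrt n}_2 \ge \sigma\sqrt{5(d\vee\log n)}\big/(1 - \sqrt{(4d\vee\log n)/n})\}$, bound $\p(\mc{E}_1)$ and $\p(\mc{E}_2 \cap \mc{E}_1^c)$ each by $\exp(-\tfrac12(d\vee\log n))$, and conclude by a union bound. Observe that $\mc{E}_1$ involves $X$ only, and that the hypothesis $n \ge 9 \vee 4d$ is there precisely to ensure $4(d\vee\log n) < n$, i.e.\ that $1 - \sqrt{(4d\vee\log n)/n}$ is strictly positive; on $\mc{E}_1^c$ the matrix $X^\T X/n$ is then invertible with $\nnorm{(X^\T X/n)^{-1}}_2 = \sigma_{\min}(X/\sqrt n)^{-2} \le (1 - \sqrt{(4d\vee\log n)/n})^{-2}$.

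For $\mc{E}_1$ I would use concentration of the smallest singular value of a standard Gaussian matrix. The map $A \mapsto \sigma_{\min}(A)$ is $1$-Lipschitz in the Frobenius norm, and by Gordon's comparison inequality $\E[\sigma_{\min}(X)] \ge \nu_n - \nu_d$ (with $\nu_m$ as in Theorem~\ref{theo:constrained_permutation}); since $\nu_d \le \sqrt d$ and $m \mapsto \sqrt m - \nu_m$ is nonincreasing, $\nu_n - \nu_d \ge \sqrt n - \sqrt d \ge \sqrt n - \sqrt{d\vee\log n}$ whenever $n \ge d$. Gaussian concentration of measure with deviation $t = \sqrt{d\vee\log n}$ then gives
\[
\p\!\left(\sigma_{\min}(X) \le \sqrt n - 2\sqrt{d\vee\log n}\right) \le \exp\!\left(-\tfrac12(d\vee\log n)\right),
\]
and dividing the event inside by $\sqrt n$ (using $\sqrt n - 2\sqrt{d\vee\log n} = \sqrt n\,(1 - \sqrt{(4d\vee\log n)/n})$) shows $\p(\mc{E}_1) \le \exp(-\tfrac12(d\vee\log n))$.

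For the noise term I would condition on $X$, which is legitimate because $\eps \perp X$. Decompose
\[
\Big(\tfrac{X^\T X}{n}\Big)^{-1}\tfrac{X^\T\eps}{\sqrt n} = \sqrt n\,(X^\T X)^{-1/2}\big[(X^\T X)^{-1/2}X^\T\eps\big].
\]
The rows of $(X^\T X)^{-1/2}X^\T$ are orthonormal, so conditionally on $X$ the bracketed vector is $N(0, \sigma^2 I_d)$; hence $W := \sigma^{-2}\nnorm{(X^\T X)^{-1/2}X^\T\eps}_2^2 \sim \chi^2_d$ and is in fact independent of $X$. Using $\nnorm{(X^\T X)^{-1/2}}_2 = 1/\sigma_{\min}(X)$, we get on $\mc{E}_1^c$
\[
\Big\|\Big(\tfrac{X^\T X}{n}\Big)^{-1}\tfrac{X^\T\eps}{\sqrt n}\Big\|_2 \le \frac{\sqrt n}{\sigma_{\min}(X)}\,\sigma\sqrt W = \frac{\sigma\sqrt W}{\sigma_{\min}(X/\sqrt n)} \le \frac{\sigma\sqrt W}{1 - \sqrt{(4d\vee\log n)/n}}.
\]
Therefore $\mc{E}_2 \cap \mc{E}_1^c \subseteq \{W \ge 5(d\vee\log n)\}$, and since $W \sim \chi^2_d$ the upper Laurent--Massart bound $\p(\chi^2_d \ge d + 2\sqrt{dx} + 2x) \le e^{-x}$ with $x = \tfrac12(d\vee\log n)$ gives $\p(W \ge (2+\sqrt 2)(d\vee\log n)) \le \exp(-\tfrac12(d\vee\log n))$; as $(2+\sqrt 2)(d\vee\log n) \le 5(d\vee\log n)$, this controls $\p(\mc{E}_2 \cap \mc{E}_1^c)$. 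Adding the two probabilities yields $\p(\mc{E}_1 \cup \mc{E}_2) \le 2\exp(-\tfrac12(d\vee\log n))$, which is the claim.

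The only delicate point is the smallest-singular-value estimate, which must hold with the stated constant and exponent $\tfrac12(d\vee\log n)$ uniformly over $1 \le d \le n/4$ --- in particular in the regime $d < \log n$, where the naive deviation $t = \sqrt d$ would only buy exponent $d/2$. The cure is to take $t = \sqrt{d\vee\log n}$ while paying for it with the factor $4$ inside the square root; it is exactly here that the hypothesis $n \ge 9 \vee 4d$ is needed, to keep $2\sqrt{(d\vee\log n)/n} < 1$ so that the resulting lower bound on $\sigma_{\min}(X/\sqrt n)$ is positive and the subsequent matrix inversion legitimate. The remaining ingredients --- the conditional Gaussianity and the two $\chi^2$ tail bounds --- are routine.
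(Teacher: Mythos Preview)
Your proposal is correct and follows the paper's strategy closely: both split the event into a singular-value part and a noise part, both handle $\sigma_{\min}$ via Gaussian concentration with deviation $t=\sqrt{d\vee\log n}$ (this is exactly the paper's Lemma~\ref{lem:extremesingularvalues}, which you could cite directly instead of rederiving via Gordon and the monotonicity of $m\mapsto\sqrt m-\nu_m$), and both condition on $X$ for the noise.

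The one genuine difference is in the treatment of the noise term. The paper applies the Hsu--Kakade--Zhang quadratic-form bound (Lemma~\ref{lem:hansonwright}) directly to $\big\|(X^\T X/n)^{-1}X^\T\eps/\sqrt n\big\|_2^2$ with $\Gamma=\tfrac{X}{\sqrt n}(X^\T X/n)^{-2}\tfrac{X^\T}{\sqrt n}$, bounding $\tr(\Gamma)$, $\tr(\Gamma^2)$, and $\nnorm{\Gamma}_2$ in terms of $\sigma_{\min}(X/\sqrt n)$; plugging $t=d\vee\log n$ gives the factor $5$ via $d+2\sqrt{d(d\vee\log n)}+2(d\vee\log n)\le 5(d\vee\log n)$. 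You instead factor out $(X^\T X)^{-1/2}$ first, isolate an exact $\chi^2_d$ variable, and use Laurent--Massart. Your route is slightly more elementary (it exploits that the conditional distribution is \emph{exactly} $\chi^2_d$, not just sub-exponential), and it gives the tighter threshold $(2+\sqrt 2)(d\vee\log n)$ before relaxing to $5(d\vee\log n)$; the paper's route is more mechanical and would extend verbatim to sub-Gaussian noise. Either way the constants and the final probability bound match.
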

\noindent Equipped with Lemmas \ref{lem:bound_pi} and \ref{lem:Xteps}, we are in position to conclude
Theorem \ref{theo:constrained_permutation}. Expanding the expression for $\wh{\beta}$ in \eqref{eq:decomposition}, we obtain that
\begin{align*}
  \wh{\beta} = \left(\frac{X^{\T} X}{n} \right)^{-1} \frac{X^{\T} \wh{\Pi}^{\T} \y}{n} 
  &= \left(\frac{X^{\T} X}{n} \right)^{-1} \frac{X^{\T} \Pi^{*\T} \y}{n} + \left(\frac{X^{\T} X}{n} \right)^{-1} \frac{X^{\T} (\wh{\Pi}^{\T}  - \Pi^{*\T}) \y}{n} \\
  &= \beta^* + \left( \frac{X^{\T} X}{n} \right)^{-1} \frac{X^{\T} (\eps + (\wh{\Pi}^{\T} - \Pi^{*\T}) \y)}{n},  
\end{align*}
and thus
\begin{align*}
  \nnorm{\wh{\beta} - \beta^*}_2 &\leq  \frac{\norm{ \left(\frac{X^{\T} X}{n} \right)^{-1} \frac{X^{\T} \eps}{\sqrt{n}}}_2}{\sqrt{n}} + \frac{\norm{\left(\frac{X^{\T} X}{n} \right)^{-1} \frac{X^{\T}}{\sqrt{n}}}_2 \nnorm{(\wh{\Pi}^{\T} - \Pi^{*\T}) \y}_2}{\sqrt{n}} \\
                                 &\leq \frac{\norm{ \left(\frac{X^{\T} X}{n} \right)^{-1} \frac{X^{\T} \eps}{\sqrt{n}}}_2}{\sqrt{n}} + \frac{\frac{1}{\sigma_{\min}(X / \sqrt{n})}\nnorm{(\wh{\Pi}^{\T} - \Pi^{*\T}) \y}_2}{\sqrt{n}} \\
&\leq \frac{\sigma}{1 - \sqrt{\frac{4 d \vee \log n}{n}}} \left(\sqrt{\frac{5 (d \vee \log n)}{n}} + \frac{2 (1 + \sqrt{2})\sigma \epss^{-2} \{w(\mc{T}) \vee \log n\}}{\sqrt{n}} \right)                                   
\end{align*}
with the stated probability by combining Lemmas \ref{lem:bound_pi} and \ref{lem:Xteps}.

\section{Proof of Corollary \ref{corro:refitting}}

Observe that the event $\{\wh{S} = S^* \}$ is implied by the event
\begin{equation*}
\left\{ \min_{i \in S^*} |y_i - y_{\varphi^{-1}(i)}| > \nnorm{\Pi^{*\T} \y - \wh{\Pi}^{\T} \y}_{\infty} \right \},  
\end{equation*}
which is in turn implied by
\begin{equation*}
\left\{ \min_{i \in S^*} |y_i - y_{\varphi^{-1}(i)}| > \nnorm{\Pi^{*\T} \y - \wh{\Pi}^{\T} \y}_{2} \invcoloneq \gamma \right\},  
\end{equation*} 
We now have 
\begin{align*}
\p\left(\min_{i \in S^*} |y_i - y_{\varphi^{-1}(i)}| \leq \gamma \right) 
&\leq \sum_{i \in S^*} \p\left(|y_i - y_{\varphi^{-1}(i)}| \leq \gamma \right) \\
&\leq k  \max_{i \neq j} \p\left(|y_i - y_{j}| \leq \gamma \right).
\end{align*}
Note that for any $i \neq j$, $y_i - y_j \sim N(0, 2 (\nnorm{\beta^*}_2^2 + \sigma^2))$. Invoking 
Lemma \ref{lem:smallball}, we thus obtain that for any $i \neq j$ 
\begin{equation*}  
\p\left(|y_i - y_{j}| \leq \gamma \right) \leq \frac{\gamma}{\sqrt{2 (\nnorm{\beta^*}_2^2 + \sigma^2)}}.
\end{equation*}
It hence follows from the previous two displays that for any $\delta > 0$, 
\begin{equation*}
\p\left(\min_{i \in S^*} |y_i - y_{\varphi^{-1}(i)}| \leq \gamma \right) \leq \delta
\end{equation*}
if 
\begin{equation*}
\nnorm{\beta^*}_2^2 > \frac{k^2 \gamma^2}{2 \delta^2}. 
\end{equation*}
Substituting $\gamma$ by the bound from Lemma \ref{lem:bound_pi}, the above condition becomes
\begin{equation*}
\text{SNR} > \frac{2 (1 + \sqrt{2})^2 \epss^{-4}}{\delta^2}\,\cdot \frac{k^2  \{w(\mc{T}) \vee \log n\}^2}{n },
\end{equation*}    
and the first part of the corollary follows from Lemma \ref{lem:bound_pi}. Turning to the second part, consider
the least squares estimator using the reduced data set $\{(\x_i, y_i), \; i \in [n] \setminus S^* \}$ and denote
by $\wh{\beta}(S^*)$ the corresponding least squares estimator. We then have for any $\delta > 0$ 
\begin{equation*}
\p(\nnorm{\wh{\beta}(S^*) - \beta^*}_2 \leq \delta) = \p(\nnorm{\wh{\beta}(S^*) - \beta^*}_2 \leq \delta, \wh{S} = S^*) + 
\p(\nnorm{\wh{\beta}(S^*) - \beta^*}_2 \leq \delta, \wh{S} \neq S^*),
\end{equation*} 
which implies 
\begin{equation*}
\p(\nnorm{\wh{\wh{\beta}} - \beta^*}_2 \leq \delta) \geq \p(\nnorm{\wh{\beta}(S^*) - \beta^*}_2 \leq \delta) - \p(\wh{S} \neq S^*). 
\end{equation*}
It thus remains to control the first probability on the left hand side, which can be done with arguments similar 
to those used in the last part of the proof of Theorem \ref{theo:constrained_permutation}.

\section{Proof of Theorem \ref{theo:relaxation}}\label{app:theo:relaxation}

We start with a Lemma that parallels Lemma \ref{lem:decoupling}. The reasoning is analogous, and the proof is hence omitted.   
\begin{lemma}\label{lem:decoupling_LAD} 
Consider optimization problem \eqref{eq:relax_2} with solution $(\wt{\beta}, \wt{e})$. Then, if $n \geq d$, with probability one
\begin{align*}
  \wt{e} \in \argmin_{e} \frac{1}{n} \nnorm{\pre_X^{\perp} (\y - \sqrt{n}  e)}_2^2 + \lambda \nnorm{e}_1, \quad
  \wt{\beta} \in \Bigg\{ &\left(\frac{X^{\T} X}{n} \right)^{-1} \frac{X^{\T} (\y - \sqrt{n} \wt{e})}{n}, \\
  &\wt{e} \in \argmin_{e} \frac{1}{n} \nnorm{\pre_X^{\perp} (\y - \sqrt{n} e)}_2^2 + \lambda \nnorm{e}_1 \Big \}.
\end{align*}
\end{lemma}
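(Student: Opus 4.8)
The plan is to reproduce the argument behind Lemma \ref{lem:decoupling} almost verbatim; the only feature of \eqref{eq:relax_2} not present in \eqref{eq:lsq} is the additive term $\lambda \nnorm{e}_1$, which does not involve $\beta$ and hence does not obstruct the decoupling of the two blocks of variables. Concretely, for fixed $e \in \R^n$ I would split the residual orthogonally along $\text{range}(X)$ and its complement,
\[
\frac{1}{n}\nnorm{\y - X\beta - \sqrt{n} e}_2^2 = \frac{1}{n}\nnorm{\pr_X^{\perp}(\y - \sqrt{n} e)}_2^2 + \frac{1}{n}\nnorm{\pr_X(\y - \sqrt{n} e) - X\beta}_2^2 ,
\]
so that the objective of \eqref{eq:relax_2} becomes
\[
\frac{1}{n}\nnorm{\pr_X^{\perp}(\y - \sqrt{n} e)}_2^2 + \lambda \nnorm{e}_1 + \frac{1}{n}\nnorm{\pr_X(\y - \sqrt{n} e) - X\beta}_2^2 ,
\]
in which only the last summand depends on $\beta$.

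Since $\pr_X(\y - \sqrt{n} e) \in \text{range}(X)$, that summand vanishes for any $\beta$ with $X\beta = \pr_X(\y - \sqrt{n} e)$. Under assumption $(\text{\textbf{G}})$ and $n \geq d$, $X$ has full column rank with probability one, so this $\beta$ is unique and, using $\pr_X = X (X^{\T} X)^{-1} X^{\T}$, equals $(X^{\T} X / n)^{-1} X^{\T}(\y - \sqrt{n} e)/n$. Substituting back, the joint minimization over $(\beta, e)$ reduces to minimizing $\frac{1}{n}\nnorm{\pr_X^{\perp}(\y - \sqrt{n} e)}_2^2 + \lambda \nnorm{e}_1$ over $e$ alone --- precisely the first displayed problem in the lemma --- and any joint minimizer $(\wt{\beta}, \wt{e})$ necessarily has $\wt{e}$ in that argmin set together with $\wt{\beta} = (X^{\T} X / n)^{-1} X^{\T}(\y - \sqrt{n} \wt{e})/n$, which is the asserted characterization.

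I expect no genuine obstacle here: this is the proof of Lemma \ref{lem:decoupling} with one extra summand that is constant in $\beta$. The only point deserving a sentence of care is that the lemma claims an equality of solution sets, not merely an inclusion; this holds because for each minimizer $\wt{e}$ of the penalized problem the inner minimization over $\beta$ is uniquely solved (full column rank of $X$), while conversely every pair of the stated form attains the common optimal value, so the two solution sets coincide. Accordingly I would keep the argument at this level of detail and refer the reader to the proof of Lemma \ref{lem:decoupling}.
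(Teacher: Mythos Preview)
Your proposal is correct and matches the paper's approach exactly: the paper omits the proof of this lemma, stating only that ``the reasoning is analogous'' to Lemma~\ref{lem:decoupling}, and your orthogonal decomposition of the residual along $\text{range}(X)$ and its complement is precisely that analogous reasoning. The additional care you take in verifying that the characterization is an equality of solution sets (via uniqueness of the inner minimizer under full column rank of $X$) is a nice touch that the paper leaves implicit.
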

\noindent For our analysis of the optimization problem of Lemma \ref{lem:decoupling_LAD}, we need the following
crucial lemma whose proof is provided in Appendix \ref{app:lem:RE}.  
\begin{lemma}\label{lem:RE} There exists constants $c_1, c_2, \epss > 0$ so that
  if $k \leq c_1 \frac{n-d}{\log(n/k)}$, the following event holds with probability at least $1 - 2 \exp(-c_2 (n-d) )$. 
\begin{equation*}
  \left\{ \frac{1}{n} \nnorm{\pre_X^{\perp} v}_2^2 \geq \epss \frac{n-d}{n} \nnorm{v}_2^2, \quad v \in \mc{C}(S^*, 3)
  \right \},
\end{equation*}
with $S^* = \{j:\;\Pi_{jj}^* \neq 1 \}$ and $\mc{C}(S^*, 3)$ defined in \eqref{eq:lassocone}. 
\end{lemma}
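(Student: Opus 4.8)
The plan is to read the claimed inequality as a restricted eigenvalue property of the random orthogonal projector $\pr_X^{\perp}$ and to establish it with Gordon's escape-through-a-mesh theorem (Lemma \ref{lem:escape}), in close parallel with the treatment of the set $\mc{T}$ in Lemma \ref{lem:bound_pi}. Since the cone $\mc{C}(S^*,3) = \{v \in \R^n :\, \nnorm{v_{(S^*)^c}}_1 \le 3 \nnorm{v_{S^*}}_1\}$ is scale invariant, it suffices to lower bound $\inf_{v \in K}\nnorm{\pr_X^{\perp} v}_2$ over $K = \mc{C}(S^*,3) \cap \mathbb{S}^{n-1}$. Here $K$ is compact and symmetric, and -- crucially -- it is a \emph{deterministic} set, since $S^* = \{j:\, \Pi^*_{jj} \neq 1\}$ is fixed with $\Pi^*$ and $|S^*| \le k$. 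Under $(\textbf{G})$, $\text{range}(X)$ is uniformly distributed on the Grassmannian $\textsf{G}(n,d)$, so $\pr_X^{\perp}$ projects onto a uniformly random $(n-d)$-dimensional subspace and $\inf_{v \in K}\nnorm{\pr_X^{\perp} v}_2 = \text{dist}(K, \text{range}(X))$ is exactly the quantity the escape theorem bounds.

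The first step is the Gaussian width estimate $w(K) \le 28\sqrt{k \log(en/k)}$. For $v \in K$ one has $\nnorm{v}_1 \le \nnorm{v_{S^*}}_1 + \nnorm{v_{(S^*)^c}}_1 \le 4\nnorm{v_{S^*}}_1 \le 4\sqrt{|S^*|}\,\nnorm{v_{S^*}}_2 \le 4\sqrt{k}$, so $K \subseteq \{v:\, \nnorm{v}_2 \le 1,\ \nnorm{v}_1 \le 4\sqrt{k}\}$. Combining the classical inclusion $\{v:\, \nnorm{v}_2 \le 1,\ \nnorm{v}_1 \le \sqrt{s}\} \subseteq 2\,\conv(B_0(s;n) \cap \{\nnorm{v}_2 \le 1\})$ with the invariance of the Gaussian width under convex hulls and the bound $w(B_0(s;n)\cap\mathbb{S}^{n-1}) \le 3.5\sqrt{s\log(en/s)}$ used in \eqref{eq:bound_width_T} (cf.\ \cite{PlanVershynin2013a}), one obtains the displayed width bound, valid whenever $16k \le n$ (guaranteed by the sparsity hypothesis for $c_1$ small).

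The second step is to feed this into Lemma \ref{lem:escape} with $K$ as above, $V = \text{range}(X)$, $m = n$, $l = n-d$, and slack parameter $\epss = \tfrac{1}{4\sqrt{2}}\sqrt{(n-d)/n}$ (which is $< 1/5$). With this choice $(1-\epss)\nu_{n-d} - \epss\nu_n \ge \tfrac12\nu_{n-d}$, so the hypothesis of the lemma (namely $(1-\epss)\nu_{n-d} - \epss\nu_n \ge 2\,w(K)$, cf.\ \eqref{eq:cond_samples_unrelaxed}) is implied by $w(K) \le \tfrac14\nu_{n-d}$, i.e.\ by $28\sqrt{k\log(en/k)} \le \tfrac14\nu_{n-d}$; since $\nu_{n-d} \ge \sqrt{(n-d)/2}$, this holds once $k\log(en/k) \le c_1(n-d)$ for $c_1$ a small enough absolute constant -- exactly the assumption. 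The escape theorem then yields $\inf_{v \in K}\nnorm{\pr_X^{\perp} v}_2 > \epss$ with probability at least $1 - \tfrac{7}{2}\exp(-\psi\,\nu_n^2/8)$; because $\epss\,\nu_n/\nu_{n-d} = O(1)$ for this $\epss$, the factor $\psi$ is bounded below by a constant multiple of $(n-d)/n$, and with $\nu_n^2 \asymp n$ the exponent is of order $n-d$, giving probability at least $1 - 2\exp(-c_2(n-d))$ after adjusting $c_2$ (and for $n-d$ beyond an absolute constant). Squaring and homogenizing over the cone, $\nnorm{\pr_X^{\perp} v}_2^2 \ge \epss^2\,\nnorm{v}_2^2 = \tfrac{1}{32}\tfrac{n-d}{n}\,\nnorm{v}_2^2$ for every $v \in \mc{C}(S^*,3)$, which is the asserted restricted eigenvalue bound.

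I expect the only genuinely delicate point to be the calibration in the second step: Lemma \ref{lem:escape} must be applied with a slack parameter that \emph{scales like} $\sqrt{(n-d)/n}$ rather than being a universal constant, and one has to check both that its hypothesis is still implied by $k\log(n/k) \lesssim n-d$ and that its exponential probability bound does not degrade -- the key cancellation being that $\epss\,\nu_n/\nu_{n-d}$ stays $O(1)$ for that choice. This is precisely why the conclusion is stated relative to the ``ideal'' value $\tfrac{n-d}{n}$ of $\nnorm{\pr_X^{\perp} v}_2^2/\nnorm{v}_2^2$ rather than as an absolute constant. Everything else is bookkeeping. A route that sidesteps the escape theorem altogether would be a union bound over the $\binom{n}{2k}$ candidate supports, establishing a restricted isometry property for $\pr_X^{\perp}$ on $2k$-sparse vectors through an $\epss$-net on each $2k$-dimensional coordinate subspace, followed by the usual shelling argument to pass from sparse vectors to $\mc{C}(S^*,3)$; this delivers the same admissible range of $k$, but with less transparent constants.
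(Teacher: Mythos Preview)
Your proof is correct, but it takes a genuinely different route from the paper's own argument. The paper does \emph{not} invoke Gordon's escape theorem for this lemma; instead it proceeds exactly along the alternative you sketch in your last paragraph. Concretely, the paper observes that the rescaled projector $\sqrt{n/(n-d)}\,\pr_X^{\perp}$ satisfies the Johnson--Lindenstrauss concentration property (Lemma~2.2 in \cite{DasGupta2003}), feeds this into the Baraniuk et al.\ result \cite{Baraniuk2006} to obtain a $(3k,\delta^*/4)$-restricted isometry for $k \leq c_1 (n-d)/\log(n/k)$, deduces the $(k,3)$-restricted eigenvalue bound via the Bickel--Ritov--Tsybakov implication \cite{Bickel2009}, and finally upgrades from $\nnorm{v_J}_2$ to $\nnorm{v}_2$ on the full cone using an argument from \cite{Zhou2009}. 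The chain is modular and avoids precisely the delicate calibration you flag: no scaling of the slack parameter with $\sqrt{(n-d)/n}$ is needed, since the $(n-d)/n$ factor enters automatically through the Johnson--Lindenstrauss normalization.

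Your approach is more self-contained in that it reuses only machinery already deployed for Theorem~\ref{theo:constrained_permutation}, and it yields the result in a single step rather than through four imported lemmas. The price is the somewhat finicky choice of $\epss$, which you handled correctly (the key check that $\epss\,\nu_n/\nu_{n-d} = O(1)$ keeps the exponent of order $n-d$). The paper's route trades that calibration for dependence on external references, but gives a cleaner conceptual picture: the projector is a near-isometry on all $O(k)$-sparse vectors, not just on a single cone $\mc{C}(S^*,3)$, which is a slightly stronger intermediate statement.
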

\noindent In correspondence with Lemma \ref{lem:bound_pi}, we first bound $\nnorm{\wt{e} - e^*}_2$,
where $e^* = (\Pi^* X \beta^* - X \beta^*)/\sqrt{n}$.
\begin{lemma}\label{lem:bound_e} Under the conditions of Theorem \ref{theo:relaxation}, with probability at least $1 - 2 \exp(-c_{\epss} (n-d))  - 2n^{-M^2}$, we have 
\begin{equation*}
\nnorm{\wt{e} - e^*}_2 \leq   48 (1 + M) \sigma  \frac{n}{n-d} \epss^{-1} \sqrt{\frac{k \log n}{n}}.  
\end{equation*}
\end{lemma}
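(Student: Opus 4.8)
The plan is to recognize the decoupled problem of Lemma~\ref{lem:decoupling_LAD} as a Lasso problem whose effective design is $\sqrt{n}\,\pr_X^{\perp}$ and whose target is $e^*$, and then to run the standard basic-inequality / cone / restricted-eigenvalue argument. Since $\pr_X^{\perp} X\beta^* = 0$ and $f^* = \sqrt{n}\,e^*$, we have $\pr_X^{\perp}\y = \sqrt{n}\,\pr_X^{\perp} e^* + \pr_X^{\perp}\eps$, hence $\pr_X^{\perp}(\y - \sqrt{n}\,e) = \sqrt{n}\,\pr_X^{\perp}(e^* - e) + \pr_X^{\perp}\eps$ and
\[
\tfrac{1}{n}\nnorm{\pr_X^{\perp}(\y - \sqrt{n}\, e)}_2^2 = \nnorm{\pr_X^{\perp}(e^* - e)}_2^2 + \tfrac{2}{\sqrt{n}}\scp{e^* - e}{\pr_X^{\perp}\eps} + \tfrac{1}{n}\nnorm{\pr_X^{\perp}\eps}_2^2,
\]
where the last term does not depend on $e$. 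Writing $\Delta \coloneq \wt{e} - e^*$ and using optimality of $\wt{e}$ against the feasible point $e = e^*$ gives the basic inequality
\[
\nnorm{\pr_X^{\perp}\Delta}_2^2 \;\le\; \tfrac{2}{\sqrt{n}}\scp{\Delta}{\pr_X^{\perp}\eps} + \lambda\bigl(\nnorm{e^*}_1 - \nnorm{\wt{e}}_1\bigr).
\]

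Next I would control the noise term in $\ell_\infty$. Conditionally on $X$ one has $\pr_X^{\perp}\eps \sim N(0, \sigma^2 \pr_X^{\perp})$, so each coordinate is mean-zero Gaussian with variance at most $\sigma^2$; a union bound over the $n$ coordinates yields $\nnorm{\pr_X^{\perp}\eps}_\infty \le (1+M)\sigma\sqrt{2\log n}$ with probability at least $1 - 2n^{-M^2}$, uniformly in $X$. On that event the choice $\lambda = 4(1+M)\sigma\sqrt{2\log(n)/n}$ gives $\tfrac{2}{\sqrt{n}}\scp{\Delta}{\pr_X^{\perp}\eps} \le \tfrac{2(1+M)\sigma\sqrt{2\log n}}{\sqrt n}\nnorm{\Delta}_1 = \tfrac{\lambda}{2}\nnorm{\Delta}_1$. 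Since $e^*$ is supported on $S^* = \{j : \Pi^*_{jj} \neq 1\}$ with $|S^*| \le k$, the triangle inequality gives $\nnorm{e^*}_1 - \nnorm{\wt{e}}_1 \le \nnorm{\Delta_{S^*}}_1 - \nnorm{\Delta_{(S^*)^c}}_1$, so the basic inequality becomes $\nnorm{\pr_X^{\perp}\Delta}_2^2 \le \tfrac{3\lambda}{2}\nnorm{\Delta_{S^*}}_1 - \tfrac{\lambda}{2}\nnorm{\Delta_{(S^*)^c}}_1$. Nonnegativity of the left-hand side forces $\nnorm{\Delta_{(S^*)^c}}_1 \le 3\nnorm{\Delta_{S^*}}_1$, i.e.\ $\Delta \in \mc{C}(S^*, 3)$.

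Finally I would invoke Lemma~\ref{lem:RE}: on its event, which has probability at least $1 - 2\exp(-c_2(n-d))$ and is valid because $k \le c_1(n-d)/\log(n/k)$, one has $\nnorm{\pr_X^{\perp}\Delta}_2^2 \gtrsim \epss\tfrac{n-d}{n}\nnorm{\Delta}_2^2$. Combining this with $\nnorm{\pr_X^{\perp}\Delta}_2^2 \le \tfrac{3\lambda}{2}\nnorm{\Delta_{S^*}}_1 \le \tfrac{3\lambda}{2}\sqrt{k}\,\nnorm{\Delta}_2$ (Cauchy--Schwarz and $|S^*| \le k$) and cancelling one factor of $\nnorm{\Delta}_2$ yields $\nnorm{\Delta}_2 \lesssim \tfrac{n}{n-d}\epss^{-1}\lambda\sqrt{k}$; substituting $\lambda$ produces the claimed bound $48(1+M)\sigma\tfrac{n}{n-d}\epss^{-1}\sqrt{k\log(n)/n}$ (the argument in fact leaves the constant $6\sqrt{2}$, well below $48$), and the two failure probabilities combine to $2\exp(-c_{\epss}(n-d)) + 2n^{-M^2}$.

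The only substantive ingredient here is Lemma~\ref{lem:RE} — the restricted-eigenvalue bound for the rank-$(n-d)$ projector $\pr_X^{\perp}$ on the cone $\mc{C}(S^*,3)$ — whose proof is deferred to Appendix~\ref{app:lem:RE}; within the present lemma the one point needing care is the conditioning on $X$ in the $\ell_\infty$ noise bound, which is clean precisely because $\eps$ is independent of $X$, the remainder being routine bookkeeping of constants.
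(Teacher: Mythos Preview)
Your proof is correct and follows essentially the same route as the paper's: both recast the decoupled problem from Lemma~\ref{lem:decoupling_LAD} as a Lasso in $e$ with design $\pr_X^{\perp}$, derive the basic inequality, establish the cone condition $\Delta \in \mc{C}(S^*,3)$ from the choice $\lambda \ge 4\nnorm{\pr_X^{\perp}\eps/\sqrt{n}}_\infty$, and conclude via Lemma~\ref{lem:RE}. The only cosmetic difference is that the paper first isolates the implication $\lambda\nnorm{\wt{e}}_1 \le 2\lambda_0\nnorm{\Delta}_1 + \lambda\nnorm{e^*}_1$ and then rearranges to get the cone, whereas you obtain the $\tfrac{3\lambda}{2}/\tfrac{\lambda}{2}$ split directly inside the basic inequality; both are standard variants of the same argument, and your tighter bookkeeping indeed yields a constant $6\sqrt{2}$ rather than the paper's looser $48$.
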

\begin{proof} We first decompose
\begin{equation*}
  \pr_X^{\perp} \y = \pr_X^{\perp} (X \beta^* + \sqrt{n} e^* + \eps) = \pr_X^{\perp} \sqrt{n} e^*  + \wt{\eps}, \quad
  \wt{\eps} \coloneq \pr_X^{\perp} \eps.  
\end{equation*}
According to Lemma \ref{lem:decoupling_LAD}, since $\wt{e}$ is a minimizer, we have that
\begin{equation*}
  \frac{1}{n} \nnorm{\pr_X^{\perp} \sqrt{n}   (e^* - \wt{e}) + \wt{\eps}}_2^2 + \lambda \nnorm{\wt{e}}_1 \leq
  \frac{1}{n} \nnorm{\wt{\eps}}_2^2  + \lambda \nnorm{e^*}_1. 
\end{equation*}
Expanding the square on the left-hand side and re-arranging terms, we obtain that
\begin{equation}\label{eq:lasso_basic}
\frac{1}{n} \nnorm{\pr_X^{\perp} \sqrt{n}   (e^* - \wt{e})}_2^2 + \lambda \nnorm{\wt{e}}_1 \leq 2 |\scp{e^* - \wt{e}}{\wt{\eps}/\sqrt{n}}| + \lambda \nnorm{e^*}_1, 
\end{equation}
which implies
\begin{equation*}
 \lambda \nnorm{\wt{e}}_1 \leq 2 \nnorm{e^* - \wt{e}}_{1} \nnorm{\wt{\eps}/\sqrt{n}}_{\infty} + \lambda \nnorm{e^*}_1,
\end{equation*}
and in turn
\begin{align*}
  \lambda \nnorm{\wt{e}_{(S^*)^c} - e_{(S^*)^c}^*}_1 = \lambda \nnorm{\wt{e}_{(S^*)^c}}_1 &\leq 2 \nnorm{e^* - \wt{e}}_{1} \nnorm{\wt{\eps}/\sqrt{n}}_{\infty} + \lambda (\nnorm{e^*}_1 - \nnorm{\wt{e}_{S^*}}_1) \\
  &\leq  2 \nnorm{e^* - \wt{e}}_{1} \underbrace{\nnorm{\wt{\eps}/\sqrt{n}}_{\infty}}_{\invcoloneq \lambda_0} + \lambda \nnorm{e_{S^*}^*- \wt{e}_{S^*}}_1. 
\end{align*}
Re-arranging, we arrive at 
\begin{equation*}
(\lambda - 2\lambda_0) \nnorm{\wt{e}_{(S^*)^c} - e_{(S^*)^c}^*}_1 \leq (\lambda + 2 \lambda_0) \nnorm{e_{S^*}^*- \wt{e}_{S^*}}_1. 
\end{equation*}
Conditional on the event $\Lambda =\{\lambda \geq 4 \lambda_0 \}$, we hence have that
\begin{equation}\label{eq:cone_condition}
  \nnorm{\wt{e}_{(S^*)^c} - e_{(S^*)^c}^*}_1 \leq 3 \nnorm{\wt{e}_{S^*} - e_{S^*}^*}_1.
\end{equation}
\noindent Equipped with this is intermediate result, we go back to \eqref{eq:lasso_basic}. We first
obtain by re-arranging that 
\begin{align*}
  \frac{1}{n} \nnorm{\pr_X^{\perp} \sqrt{n}   (e^* - \wt{e})}_2^2 &\leq (2 \lambda_0 + \lambda) \nnorm{\wt{e} - e^*}_1 \\
  &\leq (2 \lambda_0 + \lambda) 4 \nnorm{\wt{e}_{S^*} - e_{S^*}^*}_1 \\
  &\leq (2 \lambda_0 + \lambda) 4 \sqrt{k} \nnorm{\wt{e}_{S^*} - e_{S^*}^*}_2, 
\end{align*}
where we have used \eqref{eq:cone_condition} in the second inequality.  
In order to lower bound the left hand side, note that \eqref{eq:cone_condition} can be written as
$\wt{e} - e^* \in \mc{C}(S^*, 3)$ according to \eqref{eq:lassocone}. Conditional on the event in Lemma
\ref{lem:RE},
\begin{equation*}
\frac{1}{n} \nnorm{\pr_X^{\perp} \sqrt{n}   (e^* - \wt{e})}_2^2 \geq  \epss \frac{n-d}{n}  \nnorm{e^* - \wt{e}}_2^2.
\end{equation*}
Combining this with the previous display, we find that
\begin{equation*}
 \nnorm{e^* - \wt{e}}_2 \leq 12  \epss^{-1} \frac{n}{n-d} \lambda \sqrt{k}. 
\end{equation*}
\noindent In view of Lemma \ref{lem:maximalgaussian}, the choice $\lambda = 4 (1 + M) \sigma \sqrt{2 \log(n) / n}$ for $M \geq 0$
guarantees that the event $\Lambda$ defined above \eqref{eq:cone_condition} occurs with probability at least $1 - 2n^{-M^2}$.
This completes the proof of the Lemma. 
\end{proof}
\noindent Using Lemmas \ref{lem:bound_pi} and \ref{lem:decoupling_LAD}, the proof of the theorem is completed along the lines
of the last part of the proof of Theorem \ref{theo:constrained_permutation}. For brevity, we omit those steps here.

\section{Proof of Theorem \ref{theo:permutationrecovery}}
The proof of part (a) of the theorem relies on two lemmas, which are stated and proved first. 
\begin{lemma}\label{lem:up}
Let $\gamma, \delta > 0$. Consider the event 
\begin{equation}\label{eq:Agamma}
\mathcal{A}_{\gamma}= \left\{\min_{i<j}(\x_i^\top\beta^*-\x_j^\top\beta^*)^2\le \gamma^2 \right\}.
\end{equation}
Then $\p\left( \mathcal{A}_{\gamma}\right)< \delta$ if 
\[
\nnorm{\beta^*}_2^2>\dfrac{n^2(n-1)^2}{4\delta^2\pi}\gamma^2.
\]	
\end{lemma}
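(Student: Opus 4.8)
The plan is to combine a union bound over the $\binom{n}{2}$ index pairs with a Gaussian anti-concentration (small-ball) estimate. First I would use assumption $(\textbf{G})$ to identify the relevant distribution: each fitted value $\x_i^{\top}\beta^*$ is $N(0,\nnorm{\beta^*}_2^2)$, and since $\x_1,\ldots,\x_n$ are independent, so are the $\{\x_i^{\top}\beta^*\}_{i=1}^n$; hence for any fixed pair $i\neq j$ the gap $\x_i^{\top}\beta^*-\x_j^{\top}\beta^*$ is distributed as $N(0,2\nnorm{\beta^*}_2^2)$.

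Next I would apply a union bound to pass from the minimum over pairs to a single pair:
\[
\p(\mathcal{A}_{\gamma}) \;=\; \p\Bigl(\min_{i<j}(\x_i^{\top}\beta^*-\x_j^{\top}\beta^*)^2\le\gamma^2\Bigr) \;\le\; \sum_{i<j}\p\bigl(|\x_i^{\top}\beta^*-\x_j^{\top}\beta^*|\le\gamma\bigr) \;=\; \binom{n}{2}\,\p\bigl(|Z|\le\gamma\bigr),
\]
where $Z\sim N(0,2\nnorm{\beta^*}_2^2)$. I would then bound $\p(|Z|\le\gamma)$ by noting that the density of $Z$ is everywhere at most $\bigl(\sqrt{2\pi}\cdot\sqrt{2}\,\nnorm{\beta^*}_2\bigr)^{-1}=\bigl(2\sqrt{\pi}\,\nnorm{\beta^*}_2\bigr)^{-1}$, so the probability that $Z$ falls in an interval of length $2\gamma$ is at most $\gamma/(\sqrt{\pi}\,\nnorm{\beta^*}_2)$ (equivalently one may invoke the small-ball estimate already used in the paper). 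Combining the two displays yields
\[
\p(\mathcal{A}_{\gamma}) \;\le\; \frac{n(n-1)}{2}\cdot\frac{\gamma}{\sqrt{\pi}\,\nnorm{\beta^*}_2}.
\]

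Finally I would solve for when this bound is strictly below $\delta$: the inequality $\tfrac{n(n-1)\gamma}{2\sqrt{\pi}\,\nnorm{\beta^*}_2}<\delta$ is equivalent to $\nnorm{\beta^*}_2>\tfrac{n(n-1)\gamma}{2\sqrt{\pi}\,\delta}$, i.e.\ to $\nnorm{\beta^*}_2^2>\tfrac{n^2(n-1)^2\gamma^2}{4\pi\delta^2}$, which is exactly the stated hypothesis, so the proof concludes. There is no genuine obstacle here; the only point that needs a little care is to use the sharp Gaussian density bound $1/(\sqrt{2\pi}\,\tau)$ rather than a cruder anti-concentration estimate, since the precise constant $4\pi$ in the conclusion (as opposed to a larger constant) depends on it.
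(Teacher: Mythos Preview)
Your proof is correct and follows essentially the same route as the paper: identify $\x_i^{\top}\beta^*-\x_j^{\top}\beta^*\sim N(0,2\nnorm{\beta^*}_2^2)$, apply a union bound over the $\binom{n}{2}$ pairs, and use the Gaussian small-ball bound (the paper invokes its Lemma~\ref{lem:smallball}, which is exactly your density argument) to obtain $\p(\mathcal{A}_{\gamma})\le\tfrac{n(n-1)}{2}\cdot\tfrac{\gamma}{\sqrt{\pi}\,\nnorm{\beta^*}_2}$ and then solve for the stated condition.
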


\begin{proof}
It is clear that $\mathcal{A}_{\gamma}\subseteq\bigcup_{i<j}\left\{(\x_i^\top\beta^*-\x_j^\top\beta^*)^2\le \gamma^2 \right\}$. Using the fact that $\x_i^\top\beta^*-\x_j^\top\beta^*\sim N(0,2\|\beta^*\|_2^2)$, we have
\begin{align*}
	\p\left(\mathcal{A}_{\gamma}\right)&\le\p\left(\bigcup_{i<j}\left\{-\dfrac{\gamma}{\sqrt{2}\|\beta^*\|_2}\le (\x_i -\x_j)^\top\dfrac{\beta^*}{\sqrt{2}\|\beta^*\|_2}\le \dfrac{\gamma}{\sqrt{2}\|\beta^*\|_2}\right\}\right)	\\
	&\le \sum_{i<j}\p\left( -\dfrac{\gamma}{\sqrt{2}\|\beta^*\|_2} \le g \leq \dfrac{\gamma}{\sqrt{2}\|\beta^*\|_2} \right), \quad g \sim N(0,1)\\
	&\le\dfrac{n(n-1)}{2} \dfrac{\gamma}{\|\beta^*\|_2\sqrt{\pi}} \quad \text{by Lemma \ref{lem:smallball} in Appendix \ref{app:aux}.}
\end{align*}
Thus, it suffices to require that $\dfrac{n(n-1)}{2} \dfrac{\gamma}{\|\beta^*\|_2\sqrt{\pi}}<\delta$. Resolving for $\nnorm{\beta^*}_2$, this condition becomes
\[
\|\beta^*\|_2^2>\dfrac{n^2(n-1)^2}{4\delta^2\pi}\gamma^2.
\]
\end{proof}
\noindent The following lemma is of interest in its own when the design matrix is fixed and both $\beta^*$ and $\sigma^2$ are known or accurate estimates of these quantities are available. Condition \eqref{eq:fix} can then be evaluated explicitly, at least after substituting $\beta^*$ and $\sigma^2$ by their respective estimates.  The result is close in spirit to those in the framework in \cite{Collier2016}.
\begin{lemma}\label{lem:fixed}
Conditional on $X$, we have $\p \left(\widehat{\Pi}(\beta^*) \ne \Pi^*\mid X \right)<\delta$ if
\begin{equation}\label{eq:fix}
\min_{i<j}\left(\x_i^\top\beta^*-\x_j^\top\beta^*\right)^2>4\sigma^2\log\dfrac{n(n-1)}{\delta}.
\end{equation}
\end{lemma}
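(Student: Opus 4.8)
The plan is to reduce the event $\{\widehat{\Pi}(\beta^*) \neq \Pi^*\}$ to a noise event controllable by a union bound. Write $v = X\beta^*$, so that $v_i = \x_i^\top\beta^*$, and recall from \eqref{eq:lrwp} that $\y = \Pi^* v + \eps$ with $\eps = (\epsilon_1,\ldots,\epsilon_n)^\top$ and $\epsilon_i \overset{\text{i.i.d.}}{\sim} N(0,\sigma^2)$. Put $u = \Pi^* v$: the entries of $u$ are a relabeling of those of $v$, so $\{(u_i-u_j)^2\}_{i<j}$ and $\{(\x_i^\top\beta^* - \x_j^\top\beta^*)^2\}_{i<j}$ coincide as multisets, and $\y_i = u_i + \epsilon_i$ for every $i$.

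First I would use the rearrangement inequality --- the same mechanism underlying \eqref{eq:maximuminnerproduct} --- which says that for a vector $\y$ with pairwise-distinct entries, $\argmax_{\Pi \in \mc{P}_n}\scp{\Pi v}{\y}$ is the permutation arranging the entries of $v$ in the same order as those of $\y$, and that this maximizer is unique when $v$ also has pairwise-distinct entries. Consequently, if $\sign(u_i - u_j) = \sign(\y_i - \y_j) \neq 0$ for all $i \neq j$, then $\Pi^*$ is the unique maximizer and $\widehat{\Pi}(\beta^*) = \Pi^*$. I would then observe that the deterministic event
\begin{equation*}
\mc{E} = \bigl\{\, |\epsilon_i - \epsilon_j| < |u_i - u_j| \ \text{ for all } i \neq j \,\bigr\}
\end{equation*}
forces precisely this: on $\mc{E}$ each gap $|u_i - u_j|$ is strictly positive, so $u$ and hence $\y = u + \eps$ are tie-free, and $\y_i - \y_j = (u_i - u_j) + (\epsilon_i - \epsilon_j)$ shares the sign of $u_i - u_j$ for every pair. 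Hence
\begin{equation*}
\{\widehat{\Pi}(\beta^*) \neq \Pi^*\} \subseteq \mc{E}^c = \bigcup_{i<j} \bigl\{ |\epsilon_i - \epsilon_j| \geq |u_i - u_j| \bigr\}.
\end{equation*}

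It then remains to bound $\p(\mc{E}^c \mid X)$. Since $u$ is a function of $X$ while $\eps$ is independent of $X$, conditionally on $X$ we have $\epsilon_i - \epsilon_j \sim N(0, 2\sigma^2)$ for each pair, so the standard Gaussian tail bound $\p(|N(0,1)| \geq s) \leq 2 e^{-s^2/2}$ gives
\begin{equation*}
\p\bigl( |\epsilon_i - \epsilon_j| \geq |u_i - u_j| \bigm| X \bigr) \leq 2\exp\bigl( -|u_i-u_j|^2 / (4\sigma^2) \bigr) \leq 2 \exp\bigl( -\Delta_{\min}^2 / (4\sigma^2) \bigr),
\end{equation*}
where $\Delta_{\min}^2 = \min_{i<j}(\x_i^\top\beta^* - \x_j^\top\beta^*)^2$. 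A union bound over the $\binom{n}{2}$ pairs then yields $\p(\widehat{\Pi}(\beta^*) \neq \Pi^* \mid X) \leq n(n-1)\exp(-\Delta_{\min}^2/(4\sigma^2))$, and since condition \eqref{eq:fix} is exactly the statement $\exp(-\Delta_{\min}^2/(4\sigma^2)) < \delta / (n(n-1))$, the desired bound $\p(\widehat{\Pi}(\beta^*) \neq \Pi^* \mid X) < \delta$ follows.

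The hard part will be the first step: establishing the exact equivalence between $\widehat{\Pi}(\beta^*) = \Pi^*$ and order-concordance of $u$ with $\y$, and verifying that $\mc{E}$ implies that concordance while also ruling out ties; once this reduction is in place, the remainder is a one-line union bound plus a subgaussian tail estimate. It is worth noting that Lemma \ref{lem:fixed} imposes no Hamming constraint on $\widehat{\Pi}$, consistent with the $k=n$ convention adopted for Theorem \ref{theo:permutationrecovery}, which is what makes the rearrangement-inequality characterization directly applicable.
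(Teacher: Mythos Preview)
Your proposal is correct and follows essentially the same route as the paper: both reduce $\{\widehat{\Pi}(\beta^*)\neq\Pi^*\}$ to a pairwise order-violation event between $\y$ and $X\beta^*$, then apply a union bound together with the Gaussian tail bound for $\epsilon_i-\epsilon_j\sim N(0,2\sigma^2)$ to obtain the same final estimate $n(n-1)\exp(-\Delta_{\min}^2/(4\sigma^2))$. Your formulation via the noise event $\mc{E}=\{|\epsilon_i-\epsilon_j|<|u_i-u_j|\ \forall i\neq j\}$ is a slightly cleaner packaging of the same idea (and makes the inclusion $\{\widehat{\Pi}(\beta^*)\neq\Pi^*\}\subseteq\mc{E}^c$ explicit), but the substance is identical.
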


\begin{proof} Without loss of generality, we may assume that $\Pi^* = I_n$. By construction of $\widehat{\Pi}(\beta^*)$, it is clear
  that $\widehat{\Pi}(\beta^*) \ne \Pi^*$ whenever there exists $i\ne j$ such that $(y_i-y_j)(\x_i^\top\beta^*-\x_j^\top\beta^*)<0$, that is, the order between $\x_i^\top\beta^*$ and $\x_j^\top\beta^*$ is different than that of $y_i, y_j$. First, conditioning on the random design matrix $X$,  we have
\begin{align*}
\p(\wh{\Pi}(\beta^*)\ne \Pi^* \mid X)&=2\p\left(\bigcup_{i<j}\left\{y_i-y_j>0 \mid \x_i^\top\beta^*-\x_j^\top\beta^*<0\right\}\right)\\\
&\le 2 \sum_{i<j}\p\left(y_i-y_j>0 \mid \x_i^\top\beta^*-\x_j^\top\beta^*<0\right)
\end{align*}
 Since $y_i-y_j\mid \x_i^\top\beta^*-\x_j^\top\beta^*\sim N(\x_i^\top\beta^*-\x_j^\top\beta^*, 2\sigma^2)$, using the usual tail bound for the Gaussian distribution for each term in the above sum, we obtain
 \begin{align*}
\p(\wh{\Pi}(\beta^*)\ne \Pi^* \mid X)&\le 2\sum_{i<j} \exp\left( -\dfrac{\left(\x_i^\top\beta^*-\x_j^\top\beta^*\right)^2}{4\sigma^2} \right)\\
&\le n(n-1)\exp\left( -\dfrac{\min_{i<j}\left(\x_i^\top\beta^*-\x_j^\top\beta^*\right)^2}{4\sigma^2} \right).
\end{align*}
Hence, for any $\delta>0$, $\p(\wh{\Pi}(\beta^*)\ne \Pi^* \mid X) <\delta$ if

\begin{equation*}
\min_{i<j}\left(\x_i^\top\beta^*-\x_j^\top\beta^*\right)^2>4\sigma^2\log\dfrac{n(n-1)}{\delta}.
\end{equation*}
\end{proof}

\noindent Combining Lemmas \ref{lem:up} and \ref{lem:fixed}, part (a) in Theorem \ref{theo:permutationrecovery} follows
immediately for $\wh{\Pi}(\beta^*)$ from the previous lemma by invoking Lemma \ref{lem:up} with $\gamma = 2 \sigma \sqrt{\log\frac{n(n-1)}{\delta}}$. We then have
$\p(\wh{\Pi}(\beta^*)\ne \Pi^*)\le \p(\mathcal{A}_{\gamma})+\left(1-\p(\mathcal{A}_{\gamma})\right)\delta<2\delta$ under the conditions of the theorem.
The general version of the theorem results from
\begin{align*}
  \min_{i < j} |\x_i^{\T} \wh{\theta} - \x_j^{\T} \wh{\theta}| &\geq  \min_{i < j} |\x_i^{\T} \beta^* - \x_j^{\T} \beta^*| - 2 \max_{1 \leq i \leq n} |\x_i^{\T} (\beta^* - \wh{\theta})| \\
  &\geq \min_{i < j} |\x_i^{\T} \beta^* - \x_j^{\T} \beta^*| - 2 \sigma \Delta. 
\end{align*}
conditional on the event $\{ \nnorm{X \beta^* - X \wh{\theta}}_{\infty} \leq \sigma \Delta \}$. The assertion follows from an application of
Lemma \ref{lem:up} with $\gamma = 2 \sigma \left(\sqrt{\log\frac{n(n-1)}{\delta}} + \Delta \right)$. 
\vskip2ex
\noindent The proof of part (b) in Theorem \ref{theo:permutationrecovery} requires another lemma.

\begin{lemma}\label{lem:low}
Consider the event $\mc{A}_{\gamma}$ in \eqref{eq:Agamma}. Then, for all $n \geq 5$, $\p\left( \mathcal{A}_{\gamma}\right)\ge 0.85$ if 
\begin{equation}\label{eq:lb}
\|\beta^*\|_2^2\le \dfrac{1}{25\pi}n^2\gamma^2.
\end{equation}
\end{lemma}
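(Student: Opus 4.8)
The plan is to translate $\mc{A}_\gamma$ into a statement about the minimal gap among $n$ i.i.d.\ standard normals and then run a pigeonhole argument on a fixed bounded interval; unlike in Lemma~\ref{lem:up}, where a small-ball estimate for a single pair sufficed, here we need to exploit that \emph{many} of the projected design points are forced into a small region. If $\beta^* = 0$ then $\mc{A}_\gamma$ has probability one and there is nothing to prove, so assume $\beta^* \neq 0$ (hence $\gamma > 0$) and set $Z_i = \x_i^{\T}\beta^*/\nnorm{\beta^*}_2$, $i \in [n]$, which are i.i.d.\ $N(0,1)$ under $(\textbf{G})$. Writing $t = \gamma/\nnorm{\beta^*}_2$, hypothesis \eqref{eq:lb} amounts to $\nnorm{\beta^*}_2 \le n\gamma/(5\sqrt{\pi})$, i.e., $t \ge 5\sqrt{\pi}/n$, and
\[
\mc{A}_\gamma = \Big\{ \min_{i<j} |Z_i - Z_j| \le t \Big\} \supseteq \Big\{ \min_{i<j} |Z_i - Z_j| \le 5\sqrt{\pi}/n \Big\},
\]
so it suffices to lower bound the probability of the event on the right.

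Next I would fix the interval $I = [-2,2]$ of length $4$ and let $N_I = |\{i \in [n]:\, Z_i \in I\}| \sim \mathrm{Bin}(n,p)$ with $p = 2\Phi(2) - 1 > 0.95$. On $\{N_I \ge m\}$, sorting the $Z_i$ that fall in $I$ and summing consecutive gaps (which telescopes to at most $4$) shows that two of them differ by at most $4/(m-1)$; hence $\{N_I \ge m\} \subseteq \mc{A}_\gamma$ whenever $4/(m-1) \le 5\sqrt{\pi}/n$. Since $4/(5\sqrt{\pi}) < 1/2$, the choice $m = \lceil 4n/(5\sqrt{\pi})\rceil + 1 \le n/2 + 2$ works for every $n \ge 5$, and it remains to show $\p(N_I \ge m) \ge 0.85$. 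Because $m$ lies comfortably below $pn$, this follows from a Chernoff/Hoeffding tail bound for $n$ not too small; the finitely many remaining values $n \in \{5,6,7\}$ are dispatched by evaluating the binomial tail directly, where one finds probabilities well above $0.98$.

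The argument is essentially routine once this set-up is in place. The only mildly delicate point is that the universal constant $0.85$ must hold uniformly in $n$, including the smallest admissible value $n = 5$, where off-the-shelf concentration inequalities are too loose and an explicit tail computation is required; a secondary point is that the interval must be wide enough that $p$ is close to $1$ (so $N_I$ is large with high probability) while the induced gap $4/(m-1)$ still stays below $5\sqrt{\pi}/n$, which is exactly what $4/(5\sqrt{\pi}) < 1/2$ guarantees.
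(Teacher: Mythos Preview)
Your argument is correct but follows a genuinely different route from the paper's. The paper works with the \emph{small} interval $\mc{B}=(-\gamma/\sqrt{2},\gamma/\sqrt{2})$ for $\x^\top\beta^*$ itself: under \eqref{eq:lb} the single-point hitting probability satisfies $p\ge \gamma/(\nnorm{\beta^*}_2\sqrt{2\pi})\ge 3.5/n$, and then one checks directly that a $\text{Bin}(n,p)$ variable with $p\ge 3.5/n$ and $n\ge 5$ produces at least two hits with probability $\ge 0.85$; two points in $\mc{B}$ already force the gap to be small. Your pigeonhole-on-$[-2,2]$ argument replaces ``two points in a tiny box'' by ``roughly $n/2$ points in a fixed box, hence two consecutive ones are close.'' The paper's route is shorter (only a two-term binomial tail and no case analysis), while yours has the minor advantage that the interval is independent of $\gamma$ and $\nnorm{\beta^*}_2$ and that the pigeonhole step cleanly yields the gap $4/(m-1)\le 5\sqrt{\pi}/n$; the cost is the extra bookkeeping in choosing $m$ and the hand-check for small $n$. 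Both approaches ultimately hinge on the same mechanism: the hypothesis $\nnorm{\beta^*}_2\le n\gamma/(5\sqrt{\pi})$ makes $\gamma/\nnorm{\beta^*}_2$ of order at least $1/n$, which is precisely the scale at which i.i.d.\ standard normals start to collide.
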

\begin{proof}
Since $\x_i^\top\beta^*/\|\beta^*\|_2 \sim N(0,1)$, we have for $g \sim N(0,1)$
\begin{equation}\label{eq:ap1}
p:=\p\left( -\dfrac{\gamma}{\sqrt{2}}<\x^\top\beta^*<\dfrac{\gamma}{\sqrt{2}}\right)=\p\left( -\dfrac{\gamma}{\sqrt{2}\nnorm{\beta^*}_2}<g<\dfrac{\gamma}{\sqrt{2}\nnorm{\beta^*}_2} \right)\ge \dfrac{\gamma}{\nnorm{\beta^*}_2\sqrt{2\pi}},
\end{equation}
whenever $\gamma < \frac{3}{\sqrt{2}} \nnorm{\beta^*}_2$, otherwise $p > .86$ as can be verified by evaluating the Gaussian cumulative distribution function numerically. 

\noindent Let  $\mathcal{B}:=\left\{-\dfrac{\gamma}{\sqrt{2}}<\x^\top\beta^*<\dfrac{\gamma}{\sqrt{2}}\right\}$. For \textit{i.i.d.} observations $\x_1,\ldots,\x_n$, let
\[
u:=\sum_{i=1}^n \mathbb{I}(\x_i\in \mathcal{B})\sim \text{binom}(n, p),
\]
with $p$ as defined in \eqref{eq:ap1}. The event $\mathcal{A}_{\gamma}$ in \eqref{eq:Agamma} occurs whenever at least two $\x_i, \x_j\in \mathcal{B}$, i.e., the event  $\left\{u\ge 2\right\}\subseteq\mathcal{A}_{\gamma}$. Let us suppose for the moment that $p \geq 3.5/n$ for $n\ge 5$.  Then
\begin{align*}
	\p\left(\mathcal{A}_{\gamma}\right)&\ge\p\left(u\ge 2\right)\\
&\geq 1-\left(1-\dfrac{3.5}{n}\right)^{n}-n\left(\dfrac{3.5}{n}\right)\left(1-\dfrac{3.5}{n}\right)^{n-1}\\
&>.85 \quad \text{for $n\ge 5$}.
\end{align*}
Using Eq.\eqref{eq:ap1}, we now conclude that $\p\left(\mathcal{A}_{\gamma}\right)\ge .85$ whenever $\dfrac{\gamma}{\|\beta^*\|_2 \sqrt{2\pi}}\ge 3.5/n$. This holds if 
\[
\|\beta^*\|_2^2 \le \dfrac{1}{25\pi}n^2 \gamma^2.
\]
\end{proof}
\noindent We finally turn to the proof of part (b) of the theorem. Define indices $i_0, j_0 \subset [n]$ by the relation $\x_{i_0}^\top\beta^*-\x_{j_0}^\top\beta^*=\min_{i<j}|\x_i^\top\beta^*-\x_j^\top\beta^*|$. For given $\gamma>0$, we have
\begin{align*}
\p(\widehat{\Pi}(\beta^*)\ne \Pi^*) &\ge \p\left(y_{i_0}-y_{j_0} <0\mid 0<\x_{i_0}^\top\beta^*-\x_{j_0}^\top\beta^*<\gamma\right)\p\left(0<\x_{i_0}^\top\beta^*-\x_{j_0}^\top\beta^*<\gamma\right)\\
                                    &= \p\left(\x_{i_0}^\top\beta^*+\sigma\epsilon_{i_0}-\x_{j_0}^\top\beta^*-\sigma\epsilon_{j_{0}} <0\mid 0<\x_{i_0}^\top\beta^*-\x_{j_0}^\top\beta^*<\gamma\right) \times \\
  &\quad \times \p\left(0<\x_{i_0}^\top\beta^*-\x_{j_0}^\top\beta^*<\gamma\right)\\
&\ge \p\left(\sigma\epsilon_{i_0}-\sigma\epsilon_{j_{0}} <-\gamma \right)\p\left(0<\x_{i_0}^\top\beta^*-\x_{j_0}^\top\beta^*<\gamma\right)\\
&\ge \p\left(g<-\dfrac{\gamma}{\sqrt{2}\sigma} \right)\p\left(0<\x_{i_0}^\top\beta^*-\x_{j_0}^\top\beta^*<\gamma\right)\\
                                    &\ge  \p\left(g<-\dfrac{4\sqrt{\pi}\nnorm{\beta^*}_2}{\sqrt{2}\sigma n}c \right)\p\left(0<\x_{i_0}^\top\beta^*-\x_{j_0}^\top\beta^*<\dfrac{4\sqrt{\pi}\nnorm{\beta^*}_2}{n}c\right), \\
  &\quad\;\; \text{by choosing $\gamma=\dfrac{4\sqrt{\pi}\nnorm{\beta^*}_2}{n}c$, for $0<c\le 1$.}\\
&\ge \Phi(-.1)(.85), \: \text{by Lemma \ref{lem:low}, for a choice of $c$ s.t.$\dfrac{4\sqrt{\pi}\nnorm{\beta^*}_2}{\sqrt{2}\sigma n}c\le .1$.}\\
&\ge.35,
\end{align*}
where in the penultimate line, $\Phi(x) = \int_{-\infty}^x \frac{1}{\sqrt{2\pi}}\exp(-t^2/2) \; dt$ denotes the Gaussian cumulative distribution function.  
Therefore, $\p(\wh{\Pi}(\beta^*)\ne\Pi^*)\ge .35$ whenever $\frac{\nnorm{\beta^*}_2^2}{\sigma^2}<\frac{C}{n^2}$ for some $C>0$. This concludes
the proof of part (b) of the theorem. 


\section{Proof of Lemma \ref{lem:RE}}\label{app:lem:RE}
The proof of Lemma \ref{lem:RE} is given at the end of this section. The proof depends on several
lemmas collected from the literature which are stated first.     
\vskip1ex
\noindent For $1 \leq s \leq p$, let $\mc{J}(s)$ denote the set of all index subsets of $[p]$ of
size $s$.    

\begin{defn} For  $J \in \mc{J}(s)$, and $\alpha \in [1,\infty)$, define 
\begin{equation}\label{eq:lassocone}
\mc{C}(J, \alpha) = \{v \in \R^p:\, \nnorm{v_{J^c}}_1 \leq \alpha \nnorm{v_J}_1 \}. 
\end{equation}
\end{defn}

\begin{defn}\cite{CandesTao2005} Let $A \in \R^{N \times p}$ and $B_0(s;p) = \{v \in \R^p: \; \nnorm{v}_0 \leq s \}$. Define the $s$-sparse minimum and maximum eigenvalues associated with $A$ by 
\begin{equation*}
  \lambda_{\min}(s) = \min_{v \in B_0(s;p) \setminus \{0\}} \frac{\nnorm{A v}_2^2}{\nnorm{v}_2^2}, \qquad
  \lambda_{\max}(s) = \max_{v \in B_0(s;p) \setminus \{0\}} \frac{\nnorm{A v}_2^2}{\nnorm{v}_2^2}. 
\end{equation*}
Let $\delta \in (0,1)$. We say that $A$ is an ($s$, $\delta$)-\textbf{restricted isometry}  if
\begin{equation*}
  1 - \delta \leq \lambda_{\min}(s) \leq \lambda_{\max}(s) \leq 1+\delta.
\end{equation*}
\end{defn}

\begin{defn}\cite{Bickel2009} Let $A \in \R^{N \times p}$. We define the $(s, \alpha)$-\textbf{restricted eigenvalue} by 
\begin{equation}\label{eq:recondition}
\phi(s, \alpha) = \min_{J \in \mc{J}(s)} \, \min_{v \in \mc{C}(J, \alpha) \setminus
  0}\, \, \frac{\nnorm{A v}_2^2}{\nnorm{v_J}_2^2}. 
\end{equation}   
\end{defn}


\begin{lemma}\cite{Bickel2009} Suppose that $A$ is a ($3s, \delta^*/4$)-restricted isometry for some $\delta^* \in (0,1)$. The $(s, 3)$-restricted eigenvalue then satisfies $\phi(s, 3) \geq 1 - \delta^*$. 
\end{lemma}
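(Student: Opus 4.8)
The plan is to prove the standard implication ``restricted isometry $\Rightarrow$ restricted eigenvalue'' by the \emph{shelling} (block decomposition) argument, combined with an almost-orthogonality estimate extracted from the restricted isometry property via polarization. Fix an index set $J \in \mc{J}(s)$ and a nonzero $v \in \mc{C}(J,3)$; the target is the pointwise bound $\nnorm{Av}_2^2 \geq (1-\delta^*)\,\nnorm{v_J}_2^2$, which, upon minimizing over $J$ and $v$, yields $\phi(s,3) \geq 1-\delta^*$. Write $T_0 = J$, order the entries of $v$ restricted to $T_0^c$ by decreasing absolute value, and partition $T_0^c$ into consecutive blocks $T_1, T_2, \dots$ each of cardinality $s$ (the last possibly smaller). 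Set $T_{01} = T_0 \cup T_1$, so $|T_{01}| = 2s$ and, crucially, $|T_{01}| + |T_j| = 3s$ for every $j \geq 2$; this is exactly why the block size is taken to be $s$ rather than $2s$, since it keeps every support appearing below within the $3s$ budget of the assumed restricted isometry.

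First I would record the two ingredients. (i) \emph{Shelling inequality.} As the entries of $v$ on $T_j$ do not exceed those on $T_{j-1}$ in absolute value, $\nnorm{v_{T_j}}_\infty \leq \tfrac1s \nnorm{v_{T_{j-1}}}_1$, hence $\nnorm{v_{T_j}}_2 \leq s^{-1/2}\nnorm{v_{T_{j-1}}}_1$ for $j \geq 2$; summing and using the cone constraint $\nnorm{v_{T_0^c}}_1 \leq 3\nnorm{v_{T_0}}_1 \leq 3\sqrt{s}\,\nnorm{v_{T_0}}_2$ gives $\sum_{j \geq 2}\nnorm{v_{T_j}}_2 \leq s^{-1/2}\nnorm{v_{T_0^c}}_1 \leq 3\nnorm{v_{T_0}}_2$. (ii) \emph{Almost orthogonality.} From the $(3s,\delta^*/4)$-restricted isometry property together with the parallelogram identity, any two vectors $x,y$ with disjoint supports whose union has size at most $3s$ satisfy $|\scp{Ax}{Ay}| \leq \tfrac{\delta^*}{4}\nnorm{x}_2\nnorm{y}_2$; this applies with $x = v_{T_{01}}$ and $y = v_{T_j}$ for each $j \geq 2$.

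Then I would combine the two through the identity $\nnorm{Av_{T_{01}}}_2^2 = \scp{Av_{T_{01}}}{Av} - \sum_{j \geq 2}\scp{Av_{T_{01}}}{Av_{T_j}}$. Lower-bounding the left side by $(1-\tfrac{\delta^*}{4})\nnorm{v_{T_{01}}}_2^2$ via RIP (valid since $|T_{01}| \leq 3s$), upper-bounding $\nnorm{Av_{T_{01}}}_2 \leq (1+\tfrac{\delta^*}{4})^{1/2}\nnorm{v_{T_{01}}}_2$, applying Cauchy--Schwarz to $\scp{Av_{T_{01}}}{Av}$ and the estimates (i)--(ii) to the cross terms, then dividing by $\nnorm{v_{T_{01}}}_2$ (nonzero, else the claim is trivial) and invoking $\nnorm{v_{T_{01}}}_2 \geq \nnorm{v_{T_0}}_2$, produces a bound $\nnorm{Av}_2 \geq c(\delta^*)\,\nnorm{v_{T_0}}_2$ with $c(\delta^*) \to 1$ as $\delta^* \to 0$, whence $\phi(s,3) \geq c(\delta^*)^2$.

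The main obstacle is precisely this last constant bookkeeping, namely pushing $c(\delta^*)^2$ all the way up to $1-\delta^*$. A direct triangle-inequality bound $\nnorm{Av}_2 \geq \nnorm{Av_{T_{01}}}_2 - \sum_{j\geq2}\nnorm{Av_{T_j}}_2$ is hopelessly lossy --- the coefficient $3$ from the cone constraint, amplified by $(1+\delta^*/4)^{1/2}$, overwhelms the $(1-\delta^*/4)^{1/2}$ term and yields a vacuous negative lower bound --- so one is forced through the inner-product identity above, where $\sum_{j\geq2}\nnorm{v_{T_j}}_2$ enters multiplied only by $\delta^*/4$ rather than by a constant of order one. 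Carefully balancing how the RIP upper and lower bounds are spent inside that identity (using that the slack factor $\tfrac14$ in the \emph{order} $3s$ is what offsets the factor $3$ coming from the cone) is what I expect to be required to land on the exact constant $1-\delta^*$ quoted in \cite{Bickel2009}; a cruder execution already delivers the qualitatively identical statement $\phi(s,3) \geq c'(\delta^*)$ with $c'(\delta^*)>0$ and $c'(\delta^*)\to1$ as $\delta^*\to0$, which is all that is in fact used in Appendix~\ref{app:lem:RE}.
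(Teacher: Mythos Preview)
The paper does not supply its own proof of this lemma; it is quoted directly from \cite{Bickel2009} and used as a black box in Appendix~\ref{app:lem:RE}. So there is nothing to compare against on the paper's side.

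Your argument is the standard one and is correct in substance. The shelling decomposition with blocks of size $s$ so that $|T_{01}\cup T_j|=3s$, the almost-orthogonality bound $|\scp{Av_{T_{01}}}{Av_{T_j}}|\le(\delta^*/4)\nnorm{v_{T_{01}}}_2\nnorm{v_{T_j}}_2$ via polarization, and the inner-product identity to avoid the lossy triangle inequality are exactly the ingredients used in \cite{Bickel2009} (and in the Cand\`es--Tao line preceding it). You are also right to flag the constants: carrying your computation through gives
\[
\nnorm{Av}_2 \;\ge\; \frac{(1-\delta^*/4)\nnorm{v_{T_{01}}}_2 - (3\delta^*/4)\nnorm{v_{T_0}}_2}{(1+\delta^*/4)^{1/2}} \;\ge\; \frac{(1-\delta^*)}{(1+\delta^*/4)^{1/2}}\,\nnorm{v_{T_0}}_2,
\]
i.e.\ $\phi(s,3)\ge (1-\delta^*)^2/(1+\delta^*/4)$, which falls a hair short of the quoted $1-\delta^*$. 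This is a bookkeeping artifact of how the RIP constant is normalized (some references use $(1\pm\delta)$ on the norm rather than the squared norm, which changes the outcome), not a defect in your reasoning. As you correctly observe, the present paper only needs $\phi(s,3)$ bounded away from zero by a constant depending on $\delta^*$, and your argument delivers that cleanly.
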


\begin{lemma}\cite{Baraniuk2006}
Let $A$ be a random of matrix of size $N \times p$ satisfying 
\begin{equation}\label{eq:normpreservation}
 \p((1 - \epss) \nnorm{v}_2^2 \leq \nnorm{A v}_2^2 \leq (1 + \epss) \nnorm{v}_2^2) \leq 2 \exp(-N c_0(\epss))
\end{equation}
for any $\epss \in (0,1)$ and any $v \in \R^p$, where $c_0(\epss) > 0$ is a constant depending only on $\epss$. Then for any $\delta \in (0,1)$, there exist constants $c_1, c_2 > 0$ depending only on $\delta$ such that $A$ is a $(q, \delta)$-restricted isometry for any $q \leq c_1 N / \log(p/q)$ with probability at least $1 - 2 \exp(-c_2 N)$. 
\end{lemma}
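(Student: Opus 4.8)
The plan is to establish the standard ``concentration implies restricted isometry'' implication. The first step is a reduction: by the definition of the restricted isometry property, $A$ being a $(q,\delta)$-restricted isometry is exactly the uniform norm-preservation statement that $(1-\delta)\nnorm{v}_2^2 \leq \nnorm{A v}_2^2 \leq (1+\delta)\nnorm{v}_2^2$ for every $v \in B_0(q;p)$. Since each such $v$ is supported on some index set $T \subseteq [p]$ with $|T| = q$, it suffices to fix one support $T$, establish the two-sided bound uniformly over the $q$-dimensional coordinate subspace $X_T = \{v:\, \operatorname{supp}(v) \subseteq T\}$ with high probability, and then take a union bound over the $\binom{p}{q}$ choices of $T$.

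First I would treat a single fixed support $T$ by a covering argument. The unit sphere of the $q$-dimensional space $X_T$ admits a $\rho$-net $Q_T$ of cardinality at most $(1+2/\rho)^q$ for a resolution $\rho \in (0,1)$ to be fixed later. Applying hypothesis \eqref{eq:normpreservation} with parameter $\epss$ to each net point and taking a union bound, the event that
\begin{equation*}
(1-\epss)\nnorm{u}_2^2 \leq \nnorm{A u}_2^2 \leq (1+\epss)\nnorm{u}_2^2 \quad \text{for all } u \in Q_T
\end{equation*}
fails has probability at most $2(1+2/\rho)^q \exp(-N c_0(\epss))$.

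The main obstacle is the passage from the finite net to the entire sphere. For this I would use the standard self-referential bootstrapping argument: set $M = \sup_{v \in X_T,\, \nnorm{v}_2 = 1} \nnorm{Av}_2$. Any unit $v \in X_T$ lies within $\rho$ of some net point $u$, so on the good event $\nnorm{Av}_2 \leq \nnorm{Au}_2 + \nnorm{A(v-u)}_2 \leq \sqrt{1+\epss} + M\rho$; taking the supremum over $v$ yields $M \leq \sqrt{1+\epss}/(1-\rho)$, and a parallel chain of inequalities bounds $\inf_{v} \nnorm{Av}_2$ from below. Choosing $\rho$ and $\epss$ as sufficiently small fixed multiples of $\delta$ then forces $1-\delta \leq \nnorm{Av}_2^2 \leq 1+\delta$ uniformly over the unit sphere of $X_T$. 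The delicate point here is that $\rho$ and $\epss$ must be calibrated together so that the amplification factor $1/(1-\rho)$ does not overshoot the target tolerance $\delta$.

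Finally I would assemble the pieces by a union bound over supports, using $\binom{p}{q} \leq (ep/q)^q$, so that the overall failure probability is at most
\begin{equation*}
2\,(ep/q)^q\,(1+2/\rho)^q \exp(-N c_0(\epss)) = 2\exp\!\left(q\log(ep/q) + q\log(1+2/\rho) - N c_0(\epss)\right).
\end{equation*}
With $\rho$ and $\epss$ now fixed functions of $\delta$, the entropy term $q\log(1+2/\rho)$ is of order $q$ and hence dominated by $q\log(p/q)$, so the whole exponent is controlled by $q\log(ep/q) - N c_0(\epss)$. Imposing $q \leq c_1 N/\log(p/q)$ with $c_1 = c_1(\delta)$ small enough that $q\log(ep/q) + q\log(1+2/\rho) \leq \tfrac{1}{2} N c_0(\epss)$, the failure probability is bounded by $2\exp(-c_2 N)$ with $c_2 = c_0(\epss)/2 > 0$, which is the claimed conclusion. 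The only genuinely quantitative effort is balancing $\rho$, $\epss$, and $c_1$ as explicit functions of $\delta$; everything else is the covering-number bookkeeping sketched above.
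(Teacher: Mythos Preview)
The paper does not supply its own proof of this lemma; it simply cites it from \cite{Baraniuk2006} and uses the conclusion. Your argument is correct and is exactly the covering-plus-union-bound proof given in that reference, so there is nothing to compare: you have reproduced the intended proof. (Note, incidentally, that the displayed hypothesis \eqref{eq:normpreservation} has an evident typo---the probability bound should be on the \emph{failure} of the two-sided inequality---and you have correctly read it that way.)
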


\begin{lemma}\label{lem:DasGupta}(from Lemma 2.2 in \cite{DasGupta2003})
  Let $\pre$ denote the orthogonal projection on an $N$-dimensional subspace of $\R^p$ chosen uniformly at
  random from the Grassmannian $\textsf{G}(p, N)$. Then, for any $v \in \R^p$ and any $\epss \in (0,1)$,
  \begin{equation*}
    \p \left((1 - \epss) \nnorm{v}_2^2 \leq \frac{p}{N} \norm{\pre v}_2^2 \leq (1 + \epss) \nnorm{v}_2^2 \right) \leq 2 \exp(-N  (\epss^2/4 - \epss^3/6) \})
  \end{equation*}
\end{lemma}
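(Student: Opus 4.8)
The plan is to reduce the statement to a concentration bound for a ratio of $\chi^2$ variables and then apply a Chernoff argument; since the inequality asserts that $\frac{p}{N}\norm{\pre v}_2^2$ concentrates around $\nnorm{v}_2^2$, I would bound the probability of the complementary two-sided deviation event, which is the content inherited from \cite{DasGupta2003}. First I would exploit the $O(p)$-invariance of the uniform measure on $\textsf{G}(p,N)$: for a fixed $v$ we may assume $v = \nnorm{v}_2 e_1$, so that $\norm{\pre v}_2^2 = \nnorm{v}_2^2\,\norm{\pre e_1}_2^2$. Writing the random projection as $\pre = U P U^{\T}$ with $U$ Haar-distributed on $O(p)$ and $P$ the projection onto the first $N$ coordinates, one has $\norm{\pre e_1}_2^2 = \norm{P U^{\T} e_1}_2^2 = \sum_{i=1}^N u_i^2$ with $u = U^{\T} e_1$ uniform on $\mathbb{S}^{p-1}$. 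Representing $u = g / \nnorm{g}_2$ with $g \sim N(0, I_p)$ then yields the key distributional identity
\begin{equation*}
\frac{\norm{\pre v}_2^2}{\nnorm{v}_2^2} \stackrel{d}{=} \frac{A}{B}, \qquad A = \sum_{i=1}^N g_i^2 \sim \chi_N^2, \quad B = \sum_{i=1}^p g_i^2 \sim \chi_p^2,
\end{equation*}
so that it suffices to show $\frac{p}{N}\frac{A}{B}$ is close to $1$ with the stated probability.

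Next I would treat the two tails separately by an exponential-Markov argument. Writing $B = A + C$ with $C = \sum_{i=N+1}^p g_i^2 \sim \chi_{p-N}^2$ independent of $A$, the upper-tail event $\{\frac{p}{N}\frac{A}{B} \geq 1 + \epss\}$ is equivalent to $\{(p - (1+\epss)N)A - (1+\epss)N\,C \geq 0\}$. For $t > 0$ I would bound its probability by $\E[\exp(t\{(p-(1+\epss)N)A - (1+\epss)N C\})]$, factorize using independence of $A$ and $C$, and substitute the $\chi^2$ moment generating function $\E[\exp(sX)] = (1-2s)^{-q/2}$ for $X \sim \chi_q^2$, $s < 1/2$ (in the degenerate case $p \leq (1+\epss)N$ the event has probability zero, so nothing is to prove). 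Optimizing over $t$ collapses this to the closed form $\exp(\frac{N}{2}(1 - \beta + \ln\beta))$ with $\beta = 1 + \epss$, and the analogous computation for the lower-tail event $\{\frac{p}{N}\frac{A}{B} \leq 1 - \epss\}$ gives the same expression with $\beta = 1 - \epss$.

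Finally, I would extract the stated exponent from the elementary inequalities $\ln(1+\epss) \leq \epss - \frac{\epss^2}{2} + \frac{\epss^3}{3}$ and $\ln(1-\epss) \leq -\epss - \frac{\epss^2}{2} - \frac{\epss^3}{3}$. These give $1 - \beta + \ln\beta \leq -(\frac{\epss^2}{2} - \frac{\epss^3}{3})$ for $\beta = 1 + \epss$ and an even smaller (more negative) value for $\beta = 1 - \epss$, so each tail probability is at most $\exp(-N(\frac{\epss^2}{4} - \frac{\epss^3}{6}))$, and a union bound over the two tails produces the factor $2$. The main obstacle is not conceptual but bookkeeping: carrying out the $t$-optimization so that it genuinely reduces to the clean expression $\frac{N}{2}(1 - \beta + \ln\beta)$ while tracking the ranges in which the two moment generating functions remain finite, and then selecting the Taylor bounds with exactly the right sign pattern so that both tails are dominated by the single exponent $\frac{\epss^2}{4} - \frac{\epss^3}{6}$.
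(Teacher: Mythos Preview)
The paper does not supply its own proof of this lemma; it is quoted verbatim from \cite{DasGupta2003} and used as a black box in the proof of Lemma \ref{lem:RE}. Your proposal reconstructs essentially the original Dasgupta--Gupta argument, and it is correct. You also correctly read through the evident typo in the displayed inequality: as stated it bounds the probability of the \emph{good} event from above, whereas what is meant (and what is needed in \eqref{eq:normpreservation}) is the complementary deviation event, or equivalently the right-hand side should be $1 - 2\exp(\cdot)$.

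One small clarification on the step you flag as ``bookkeeping'': optimizing over $t$ does not literally collapse the Chernoff bound to $\exp\big(\tfrac{N}{2}(1-\beta+\ln\beta)\big)$; the raw bound after plugging in the $\chi^2$ moment generating functions and choosing the natural $t$ still carries a factor $\big((p-\beta N)/(p-N)\big)^{(p-N)/2}$ that depends on $p$. The clean $p$-free expression is obtained by a further use of $\ln(1+x)\le x$ (for the lower tail) and $\ln(1-x)\le -x$ (for the upper tail) applied to this factor, which yields $\tfrac{p-N}{2}\ln\!\big(\tfrac{p-\beta N}{p-N}\big)\le \tfrac{N}{2}(1-\beta)$ in both cases. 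Once this is in place, your Taylor bounds $\ln(1+\epss)\le \epss - \epss^2/2 + \epss^3/3$ and $\ln(1-\epss)\le -\epss-\epss^2/2$ finish the job exactly as you describe.
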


\noindent\emph{\textbf{Proof of Lemma \ref{lem:RE}}}: the lemma is essentially implied by the fact that under the stated conditions, the matrix $\pr_{X}^{\perp}$ has its $(k,3)$-restricted eigenvalue bounded away from zero by a constant $\epss$ as to be shown below. "Essentially" refers to the fact that it can be shown (cf.~\cite{Zhou2009}) that a matrix $A$ with restricted eigenvalue $\phi(k,3)$ obeys
\begin{equation*}
\nnorm{A v}_2^2 \geq \frac{1}{32} \phi(k,3) \nnorm{v}_2^2 \qquad \forall v \in \bigcup_{J \in \mc{J}(k)} \mc{C}(J,3). 
\end{equation*}  
As sketched above, a lower bound on $\phi(k,3)$ follows if $A$ is a $(3k, \delta^*/4)$ restricted isometry for some $\delta^* \in (0,1)$. That property is in turn satisfied with high probability if $A$ satisfies \eqref{eq:normpreservation}. Applying Lemma \ref{lem:DasGupta} with 
$\texttt{P} = \pr_{X}^{\perp}$, $p = n$ and $N = n-d$ yields the conclusion.    

\section{Proof of Lemma \ref{lem:Xteps}}\label{app:Xteps}
We invoke Lemma \ref{lem:hansonwright} conditional on $X$, with $\left(\frac{X^{\T} X}{n} \right)^{-1} \frac{X^{\T}}{\sqrt{n}}$ in the role of $A$, so that
$\Gamma = \frac{X}{\sqrt{n}} \left(\frac{X^{\T} X}{n} \right)^{-2} \frac{X^{\T}}{\sqrt{n}}$. Observe that 
\begin{align}\label{eq:traceineq}
\begin{split}
&(i)\;\;\tr(\Gamma) = \tr\left( \left(\frac{X^{\T} X}{n} \right)^{-1} \right) \leq  \frac{d}{\sigma^2_{\min}(X / \sqrt{n})}, \quad (ii) \;\; \nnorm{\Gamma}_2 = \frac{1}{\sigma^2_{\min}(X / \sqrt{n})}, \\
&(iii)\;\;\sqrt{\tr(\Gamma^2)} = \sqrt{\tr \left(\left(\frac{X^{\T} X}{n} \right)^{-2} \right)} \leq  \frac{\sqrt{d}}{\sigma^2_{\min}(X/\sqrt{n})}.
\end{split}
\end{align}
In view of the relations in \eqref{eq:traceineq}, application of Lemma \ref{lem:hansonwright} with $t = d \vee \log n$ and comparison of terms yields that
\begin{equation*}
  \p\left(\norm{ \left(\frac{X^{\T} X}{n} \right)^{-1} \frac{X^{\T} \eps}{\sqrt{n}}}_2 \; >  \sigma \frac{\sqrt{5 \{d \vee \log n \}}}{\sigma_{\min}(X / \sqrt{n})} \; \; \Bigg| X \right)
  \leq \exp(-d \vee \log n). 
\end{equation*}
Using Lemma \ref{lem:extremesingularvalues} for $\sigma_{\min}(X)$ with the choice $t = \sqrt{d \vee \log n}$, the assertion readily follows.

\section{Additional lemmas}\label{app:aux}
We here collect various auxiliary results acting as lemmas in the proofs of our main results.

\begin{lemma}\label{lem:escape}\emph{(\textbf{Gordon's Escape through a Mesh theorem \cite{Gordon1988}})}
 Let $K$  be a closed subset of the unit sphere in $\R^m$, let $\nu_r = \E_{g \sim N(0, I_r)}[\nnorm{g}_2]$, and let 
  $\epss > 0$. If the Gaussian width \eqref{eq:gaussianwidth} of $K$ obeys $w(K) < (1 - \epss) \nu_l - \epss \nu_m$, then an $(m - l)$-dimensional subspace
  $V$ drawn uniformly from the Grassmannian $\textsf{G}(m, m-l)$ satisfies
  \begin{equation*}
\p(\text{\emph{dist}}(K, V) > \epss) \geq 1 - \frac{7}{2} \exp \left(-\frac{1}{2} \left( \frac{(1 - \epss)\nu_l - \epss \nu_m  - w(K)}{3 + \epss + \epss \nu_m / \nu_l} \right)^2 \right).
\end{equation*}
\end{lemma}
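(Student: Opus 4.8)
The statement is Gordon's ``escape through a mesh'' theorem in its quantitative form, and one may simply invoke \cite{Gordon1988}. For a self-contained argument the plan is the following, and it also explains where each piece of the bound comes from.

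First I would realize the random subspace concretely: an $(m-l)$-dimensional subspace $V$ drawn uniformly from $\textsf{G}(m,m-l)$ has the same law as $\ker A$, where $A \in \R^{l \times m}$ has i.i.d.\ $N(0,1)$ entries (and rank $l$ almost surely). The key deterministic observation is that for $x \in \mathbb{S}^{m-1}$ with $\dist{x}{V} \le \epss$, writing $x = v + u$ with $v$ the orthogonal projection of $x$ onto $V$ and $u = x-v$, one has $\nnorm{u}_2 = \dist{x}{V} \le \epss$ and $Ax = Au$, so $\nnorm{Ax}_2 = \nnorm{Au}_2 \le \epss\,\sigma_{\max}(A)$. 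Taking contrapositives and then the minimum over the compact set $K$,
\begin{equation*}
\min_{x \in K}\nnorm{Ax}_2 > \epss\,\sigma_{\max}(A) \quad \Longrightarrow \quad \dist{K}{V} > \epss,
\end{equation*}
so it suffices to bound $\min_{x\in K}\nnorm{Ax}_2$ from below and $\sigma_{\max}(A)$ from above, each with high probability.

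For the lower bound I would write $\min_{x\in K}\nnorm{Ax}_2 = \min_{x\in K}\max_{y\in\mathbb{S}^{l-1}}\scp{Ax}{y}$ and apply Gordon's Gaussian min--max comparison inequality. The bilinear form $\scp{Ax}{y}$ and the ``decoupled'' process $\scp{g}{x} + \scp{h}{y}$ (with $g\sim N(0,I_m)$, $h\sim N(0,I_l)$ independent) do not have matching variances, so one first augments the former by a single independent $N(0,1)$ variable $\xi$ (the same for all $(x,y)$); the processes $\scp{Ax}{y}+\xi$ and $\scp{g}{x}+\scp{h}{y}$ then satisfy Gordon's hypotheses on $K\times\mathbb{S}^{l-1}$, yielding $\E\min_{x\in K}\nnorm{Ax}_2 \ge \E[\nnorm{h}_2 + \min_{x\in K}\scp{g}{x}] = \nu_l - w(K)$, where we used that $K$ is symmetric so that $\E\min_{x\in K}\scp{g}{x} = -w(K)$. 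Since $A \mapsto \min_{x\in K}\nnorm{Ax}_2$ is $1$-Lipschitz in the Frobenius norm, Gaussian concentration gives $\p(\min_{x\in K}\nnorm{Ax}_2 < \nu_l - w(K) - t) \le \exp(-t^2/2)$. For the upper bound, $\sigma_{\max}(A) = \max_{x\in\mathbb{S}^{m-1}}\max_{y\in\mathbb{S}^{l-1}}\scp{Ax}{y}$ has $\E\,\sigma_{\max}(A) \le \nu_m + \nu_l$ by the Sudakov--Fernique inequality (Theorem 2.2.3 in \cite{AdlerTaylor2007}; here the increment hypotheses hold without augmentation), and is again $1$-Lipschitz, so $\p(\sigma_{\max}(A) > \nu_m + \nu_l + s) \le \exp(-s^2/2)$. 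On the complement of the two bad events the implication above applies whenever $\nu_l - w(K) - t > \epss(\nu_m + \nu_l + s)$, i.e.\ whenever $t + \epss s < (1-\epss)\nu_l - \epss\nu_m - w(K)$, which is positive exactly under the hypothesis; choosing $t$ and $s$ proportional to this slack and optimizing the resulting exponents gives a bound of the advertised shape, and carrying out Gordon's original exponential-moment estimate (rather than the crude Lipschitz concentration just described) reproduces the precise prefactor $\tfrac{7}{2}$ and the denominator $3 + \epss + \epss\nu_m/\nu_l$.

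The main obstacle is the Gaussian comparison step: the ``right'' auxiliary process and the variance-matching trick above are easy to get wrong, and it is exactly here that the normalizations $x\in\mathbb{S}^{m-1}$, $y\in\mathbb{S}^{l-1}$ are used. The remaining ingredients -- the deterministic reduction, the two Lipschitz concentration estimates, Sudakov--Fernique, and the elementary optimization over $(t,s)$ -- are routine; the only slightly tedious point is the bookkeeping needed to land on Gordon's exact numerical constants, which is why deferring to \cite{Gordon1988} for the final inequality is the pragmatic choice.
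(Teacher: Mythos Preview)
The paper does not prove this lemma at all: it is stated in the appendix of auxiliary results taken from the literature, with only the citation \cite{Gordon1988} as justification. Your proposal correctly notes that one may simply invoke \cite{Gordon1988}, which is precisely what the paper does, and then goes beyond the paper by supplying a sound sketch of the underlying argument (Gaussian comparison for the min--max, Sudakov--Fernique for $\sigma_{\max}$, Lipschitz concentration, and the deterministic reduction via $\ker A$). One minor remark: you invoke symmetry of $K$ to get $\E\min_{x\in K}\scp{g}{x} = -w(K)$, but the lemma as stated only assumes $K$ closed; since the paper's Gaussian width \eqref{eq:gaussianwidth} uses $|\scp{g}{x}|$, the inequality $\E\min_{x\in K}\scp{g}{x} \ge -w(K)$ holds regardless, which is all that is needed.
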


\begin{lemma}\label{lem:concentration_width}\emph{(\textbf{Concentration of Gaussian processes \cite{Boucheron2013}})}
Let $K$ be a closed subset of the unit sphere in $\R^m$ with Gaussian width $w(K)$, and let $g \sim N(0, I_m)$. Then for all $t > 0$
 \begin{equation*}
 \p \left(\sup_{x \in K} |\scp{g}{x}| \geq w(K) + t \right) \leq \exp(-t^2/2). 
\end{equation*}
\end{lemma}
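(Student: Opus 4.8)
The plan is to recognize the random variable $\sup_{x \in K} |\scp{g}{x}|$ as a Lipschitz function of the Gaussian vector $g$ and then invoke the standard Gaussian concentration inequality for Lipschitz functions, which is precisely what the cited reference \cite{Boucheron2013} provides; the statement is classical, so the ``proof'' amounts to assembling three elementary facts.

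First I would set $F : \R^m \to \R$, $F(h) = \sup_{x \in K} |\scp{h}{x}|$. Since $K$ is a closed, hence compact, subset of the unit sphere, the supremum is attained, $F$ is finite (indeed $F(h) \le \nnorm{h}_2$), and $F$ is measurable. The key structural observation is that $F$ is $1$-Lipschitz in the Euclidean norm: for any $h, h' \in \R^m$ and any $x \in K$, Cauchy--Schwarz together with $\nnorm{x}_2 = 1$ gives $|\scp{h}{x}| \le |\scp{h'}{x}| + |\scp{h - h'}{x}| \le |\scp{h'}{x}| + \nnorm{h - h'}_2$; taking the supremum over $x \in K$ yields $F(h) \le F(h') + \nnorm{h - h'}_2$, and interchanging $h, h'$ gives $|F(h) - F(h')| \le \nnorm{h - h'}_2$. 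Second, by the very definition of the Gaussian width in \eqref{eq:gaussianwidth}, $\E[F(g)] = w(K)$ for $g \sim N(0, I_m)$.

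The final step is to apply the Gaussian concentration of measure inequality for Lipschitz functions (the Cirel'son--Ibragimov--Sudakov / Borell inequality; see, e.g., the Gaussian concentration theorem in \cite{Boucheron2013}): for every $1$-Lipschitz $F$ and $g \sim N(0, I_m)$ one has $\p(F(g) \ge \E[F(g)] + t) \le \exp(-t^2/2)$ for all $t > 0$. Substituting $\E[F(g)] = w(K)$ delivers the claim. The only points needing a word of care --- and none is a genuine obstacle --- are the measurability and attainment of the supremum, which follow from compactness of $K$ (or, if one prefers, from reducing to a countable dense subset of $K$ and passing to a monotone limit), and the one-line Lipschitz estimate above; everything else is a direct citation.
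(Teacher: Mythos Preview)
Your argument is correct and is precisely the standard route: show $F(h)=\sup_{x\in K}|\langle h,x\rangle|$ is $1$-Lipschitz, identify $\E[F(g)]=w(K)$ from \eqref{eq:gaussianwidth}, and apply the Gaussian Lipschitz concentration inequality from \cite{Boucheron2013}. The paper itself does not give a proof of this lemma --- it is stated in the appendix as a cited auxiliary result --- so there is nothing to compare against; your write-up is exactly the proof one would expect behind the citation.
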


\begin{lemma}\label{lem:normconcentration} \emph{(\textbf{Concentration of the norm of Gaussian random vectors \cite{Vershynin2010}})}\\ 
Let $g \sim N(0, I_n)$. Then for all $t > 0$, $\p(\nnorm{g}_2 \geq (1 + t) \sqrt{n}) \leq \exp(-t^2 n / 2)$.  
\end{lemma}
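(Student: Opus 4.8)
The plan is to derive this from the Gaussian concentration inequality for Lipschitz functions together with an elementary bound on the expected norm. First I would observe that the map $f : \R^n \to \R$, $f(x) = \nnorm{x}_2$, is $1$-Lipschitz with respect to the Euclidean metric, since $|\nnorm{x}_2 - \nnorm{y}_2| \leq \nnorm{x - y}_2$ for all $x, y \in \R^n$ by the triangle inequality. I would then invoke the Gaussian concentration of measure inequality (Borell--Tsirelson--Ibragimov--Sudakov; see e.g.~\cite{Boucheron2013}): if $g \sim N(0, I_n)$ and $f$ is $L$-Lipschitz, then $\p(f(g) \geq \E[f(g)] + s) \leq \exp(-s^2 / (2 L^2))$ for all $s > 0$. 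Applied with $f(x) = \nnorm{x}_2$ and $L = 1$, this gives $\p(\nnorm{g}_2 \geq \E[\nnorm{g}_2] + s) \leq \exp(-s^2 / 2)$.

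Next I would control the mean by Jensen's inequality: $\E[\nnorm{g}_2] \leq (\E[\nnorm{g}_2^2])^{1/2} = (\sum_{i=1}^n \E[g_i^2])^{1/2} = \sqrt{n}$. Consequently, for any $t > 0$ the event $\{\nnorm{g}_2 \geq (1+t)\sqrt{n}\} = \{\nnorm{g}_2 \geq \sqrt{n} + t\sqrt{n}\}$ is contained in $\{\nnorm{g}_2 \geq \E[\nnorm{g}_2] + t\sqrt{n}\}$, because enlarging the threshold from $\E[\nnorm{g}_2]$ to the larger value $\sqrt{n}$ only shrinks the event. Taking $s = t\sqrt{n}$ in the concentration bound above then yields $\p(\nnorm{g}_2 \geq (1+t)\sqrt{n}) \leq \exp(-t^2 n / 2)$, which is the claimed inequality.

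There is no real obstacle here; the only point requiring a moment's care is the direction of the mean bound, namely checking that replacing $\E[\nnorm{g}_2]$ by the upper bound $\sqrt{n}$ is harmless for an \emph{upper} tail statement, which it is. If one prefers a self-contained argument avoiding the abstract Lipschitz-concentration theorem, an alternative is a Chernoff bound applied to $\nnorm{g}_2^2 = \sum_{i=1}^n g_i^2$ using $\E[\exp(\lambda g_i^2)] = (1 - 2\lambda)^{-1/2}$ for $\lambda \in (0, 1/2)$ and optimizing over $\lambda$; this reproduces a bound of the same exponential order, so the route via Gaussian Lipschitz concentration is the cleaner one and delivers exactly the stated constant.
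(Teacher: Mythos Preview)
Your argument is correct: the Euclidean norm is $1$-Lipschitz, Gaussian Lipschitz concentration gives $\p(\nnorm{g}_2 \geq \E[\nnorm{g}_2] + s) \leq \exp(-s^2/2)$, and Jensen's inequality yields $\E[\nnorm{g}_2] \leq \sqrt{n}$, so setting $s = t\sqrt{n}$ produces exactly the stated bound. The paper itself does not supply a proof of this lemma; it is listed in the appendix of auxiliary results with a citation to \cite{Vershynin2010}, so there is no in-paper argument to compare against. Your route via Borell--TIS is the standard one and matches what one finds in the cited reference.
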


\begin{lemma}\label{lem:extremesingularvalues}\hspace*{-.7ex}\emph{(\textbf{Concentration of extreme singular values of Gaussian matrices \cite{Vershynin2010}})}\\
  Let $X$ be an $n \times d$ Gaussian matrix with i.i.d.~$N(0,1)$-entries. Denote by $\sigma_{\min}(X)$
  and $\sigma_{\max}(X)$ the minimum respectively maximum singular value of $X$. Then for any $t > 0$
\begin{equation*}
\p(\sigma_{\min}(X) \geq \sqrt{n} - \sqrt{d} - t) \leq \exp(-t^2 / 2), \qquad \p(\sigma_{\max}(X) \geq \sqrt{n} + \sqrt{d} + t) \leq \exp(-t^2 / 2).  
\end{equation*}
\end{lemma}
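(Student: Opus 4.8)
The plan is to derive the two tail bounds from (i) bounds on the expectations $\E[\sigma_{\max}(X)]$ and $\E[\sigma_{\min}(X)]$ via Gaussian comparison inequalities, and (ii) Gaussian concentration of measure for $1$-Lipschitz functions. Throughout I work in the regime $n \geq d$ relevant here, so that $\sigma_{\min}(X) = \min_{v \in \mathbb{S}^{d-1}} \nnorm{X v}_2$; the first displayed bound is to be understood in its lower-tail form $\p(\sigma_{\min}(X) \leq \sqrt{n} - \sqrt{d} - t) \leq \exp(-t^2/2)$, which is the form used in the applications.

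First I would observe that, viewed as functions on $\R^{n \times d}$ equipped with the Frobenius norm $\nnorm{\cdot}_F$ --- so that $\operatorname{vec}(X)$ is a standard Gaussian vector in $\R^{nd}$ --- both $X \mapsto \sigma_{\max}(X) = \nnorm{X}_2$ and $X \mapsto \sigma_{\min}(X)$ are $1$-Lipschitz: for all $X, Y$ one has $|\sigma_{\max}(X) - \sigma_{\max}(Y)| \leq \nnorm{X-Y}_2 \leq \nnorm{X-Y}_F$, and, by the variational characterization of the smallest singular value, $|\sigma_{\min}(X) - \sigma_{\min}(Y)| \leq \nnorm{X-Y}_2 \leq \nnorm{X-Y}_F$.

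Next I would bound the expectations. For the largest singular value, write $\sigma_{\max}(X) = \sup_{(u,v) \in \mathbb{S}^{n-1} \times \mathbb{S}^{d-1}} A_{u,v}$ with the centered Gaussian process $A_{u,v} = u^{\T} X v$, and introduce the reference process $B_{u,v} = \scp{g}{u} + \scp{h}{v}$ with $g \sim N(0, I_n)$ and $h \sim N(0, I_d)$ independent. A short computation gives $\E[(B_{u,v} - B_{u',v'})^2] - \E[(A_{u,v} - A_{u',v'})^2] = 2(1 - \scp{u}{u'})(1 - \scp{v}{v'}) \geq 0$, so the Sudakov--Fernique inequality (Theorem 2.2.3 in \cite{AdlerTaylor2007}) yields $\E[\sigma_{\max}(X)] \leq \E[\sup_{u,v} B_{u,v}] = \E\nnorm{g}_2 + \E\nnorm{h}_2 \leq \sqrt{n} + \sqrt{d}$, the last step by Jensen. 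For the smallest singular value, writing $\sigma_{\min}(X) = \min_v \max_u A_{u,v}$, one instead invokes Gordon's Gaussian comparison inequality for min--max functionals, which gives $\E[\sigma_{\min}(X)] \geq \E[\min_v \max_u B_{u,v}] = \E\nnorm{g}_2 - \E\nnorm{h}_2 \geq \sqrt{n} - \sqrt{d}$; I would simply quote this expectation bound from \cite{Vershynin2010}, as it is essentially the content of the result being cited.

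Finally I would apply the Gaussian concentration inequality for $1$-Lipschitz functions (e.g.\ \cite{Boucheron2013}): if $f$ is $1$-Lipschitz and $Z \sim N(0, I_N)$, then $\p(f(Z) \geq \E[f(Z)] + t) \leq \exp(-t^2/2)$. With $f = \sigma_{\max}$ this gives $\p(\sigma_{\max}(X) \geq \sqrt{n}+\sqrt{d}+t) \leq \p(\sigma_{\max}(X) \geq \E[\sigma_{\max}(X)]+t) \leq \exp(-t^2/2)$; with $f = -\sigma_{\min}$ (also $1$-Lipschitz) it gives $\p(\sigma_{\min}(X) \leq \sqrt{n}-\sqrt{d}-t) \leq \p(\sigma_{\min}(X) \leq \E[\sigma_{\min}(X)]-t) \leq \exp(-t^2/2)$. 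The step I expect to be the main obstacle is the expectation bound for $\sigma_{\min}(X)$: unlike for $\sigma_{\max}$, the naive increment comparison against the additive reference process $B$ fails (the cross-increment inequality required by a Gordon-type theorem does not hold in general), so one genuinely needs the sharper min--max form of Gordon's inequality, which is best invoked as a black box; the Lipschitz property and the concentration step are then routine.
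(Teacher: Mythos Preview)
The paper does not prove this lemma at all: it is stated in the appendix of auxiliary results and simply attributed to \cite{Vershynin2010}. Your sketch is correct and is essentially the argument given in that reference --- Sudakov--Fernique (respectively Gordon's min--max comparison) for the expectation bounds, followed by Gaussian concentration for $1$-Lipschitz functions. You also correctly flag that the first inequality in the statement is a typo and should read $\p(\sigma_{\min}(X) \leq \sqrt{n} - \sqrt{d} - t) \leq \exp(-t^2/2)$, which is indeed how the lemma is invoked elsewhere in the paper (e.g.\ in the proof of Lemma~\ref{lem:Xteps}).

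One small point worth tightening if you want the argument to be self-contained rather than deferring to the citation: after Gordon's inequality you obtain $\E[\sigma_{\min}(X)] \geq \nu_n - \nu_d$ with $\nu_m = \E\nnorm{g}_2$ for $g \sim N(0,I_m)$, and you then assert $\nu_n - \nu_d \geq \sqrt{n} - \sqrt{d}$. Since $\nu_m \leq \sqrt{m}$, this does not follow from Jensen alone; it requires the monotonicity of $m \mapsto \sqrt{m} - \nu_m$ (which does hold, and tends to zero). You are right that this is best quoted as a black box from \cite{Vershynin2010}, but be aware that the inequality in that direction is the slightly delicate one.
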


\begin{lemma}\label{lem:hansonwright}\emph{(\textbf{Concentration of quadratic forms \cite{Hsu2012}})}\\
Let $A$ be a an $m \times n$ matrix, $\Gamma = A^{\T} A$, and $g \sim N(0, I_n)$. Then for all $t > 0$
\begin{equation*}
  \p(\nnorm{A g}_2^2  > \tr(\Gamma) + 2 \sqrt{\tr(\Gamma^2) t} + 2 \nnorm{\Gamma}_2 t) \leq \exp(-t). 
\end{equation*}
\end{lemma}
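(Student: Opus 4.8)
\noindent\emph{Proof proposal.} To establish Lemma~\ref{lem:hansonwright}, the plan is to reduce the quadratic form $\nnorm{A g}_2^2 = g^{\T}\Gamma g$ to a weighted sum of independent $\chi^2_1$ variables and then apply the exponential Markov (Chernoff) method with a carefully optimized free parameter. First I would diagonalize: since $\Gamma = A^{\T}A$ is symmetric and positive semidefinite, write $\Gamma = U\diag(\lambda_1,\dots,\lambda_n)U^{\T}$ with $\lambda_i \ge 0$ and $U$ orthogonal, so that $\nnorm{A g}_2^2 = \sum_i \lambda_i (U^{\T}g)_i^2$. By rotational invariance of the standard Gaussian, $U^{\T}g \sim N(0,I_n)$, hence $\nnorm{A g}_2^2$ has the same law as $\sum_{i=1}^n \lambda_i z_i^2$ with $z_i \overset{\text{i.i.d.}}{\sim} N(0,1)$; moreover $\sum_i \lambda_i = \tr(\Gamma)$, $\sum_i \lambda_i^2 = \tr(\Gamma^2)$ and $\max_i \lambda_i = \nnorm{\Gamma}_2$.

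Next I would control the moment generating function. For $0 \le \eta < 1/(2\nnorm{\Gamma}_2)$ one has $\E\exp\big(\eta\sum_i \lambda_i z_i^2\big) = \prod_i (1-2\eta\lambda_i)^{-1/2}$, so Markov's inequality gives, for any $u > 0$,
\[
\p\Big(\textstyle\sum_i \lambda_i z_i^2 > \tr(\Gamma) + u\Big) \le \exp\Big(-\eta u + \tfrac12\textstyle\sum_i\big(-\log(1-2\eta\lambda_i) - 2\eta\lambda_i\big)\Big).
\]
I would then use the elementary bound $-\log(1-x)-x \le \tfrac{x^2}{2(1-x)}$, valid for $x\in[0,1)$ (compare the Taylor coefficients of the two sides), with $x = 2\eta\lambda_i$, followed by $1-2\eta\lambda_i \ge 1-2\eta\nnorm{\Gamma}_2$ and $\sum_i\lambda_i^2 = \tr(\Gamma^2)$, to reduce the right-hand side to $\exp\!\big(-\eta u + \eta^2\tr(\Gamma^2)/(1-2\eta\nnorm{\Gamma}_2)\big)$.

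Finally, fix $t > 0$ and take $u = 2\sqrt{\tr(\Gamma^2)\,t} + 2\nnorm{\Gamma}_2 t$; it remains to choose $\eta \in (0,1/(2\nnorm{\Gamma}_2))$ so that the exponent is at most $-t$. Writing $\eta = s/(2\nnorm{\Gamma}_2)$ with $s = (1+a)^{-1}$ and $a = \sqrt{\tr(\Gamma^2)}/(2\nnorm{\Gamma}_2\sqrt{t})$, a direct substitution shows that the exponent equals exactly $-t$; combined with the distributional identity $\nnorm{A g}_2^2 \overset{\text{d}}{=} \sum_i \lambda_i z_i^2$, this yields the asserted tail bound.

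The step I expect to cost the most care is this last one. A crude choice of $\eta$ (for instance one matching $\eta u = t$) already reproduces the correct dependence on $\sqrt{\tr(\Gamma^2)\,t}$ and $\nnorm{\Gamma}_2 t$, but with a larger multiplicative constant; to get the sharp constants $2$ and $2$ one has to notice that, after the reparameterization above, the inequality to be verified for $\eta$ collapses (after clearing denominators) to $(y-2w)^2 \le 0$ for appropriate $y,w > 0$, which pins down the optimal $\eta$ and makes the bound tight. This is exactly the argument in \cite{Hsu2012}, so the lemma could equivalently just be cited from there.
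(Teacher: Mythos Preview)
Your proposal is correct and complete; the diagonalization, the MGF bound via $-\log(1-x)-x \le x^2/\{2(1-x)\}$, and the choice of $\eta$ all check out and yield the exponent $-t$ exactly. The paper itself does not prove this lemma at all --- it is listed in Appendix~\ref{app:aux} as an auxiliary result quoted from \cite{Hsu2012} --- so you have in fact supplied the argument that the paper only cites, and your closing remark that one could simply invoke \cite{Hsu2012} is precisely what the paper does.
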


\begin{lemma}\label{lem:maximalgaussian}\emph{(\textbf{Maximal inequality for Gaussian random variables})}\\
Let $g_1, \ldots, g_n$ be zero-mean Gaussian random variables such that $\max_{1 \leq i \leq n} \E[g_i^2] \leq \sigma^2$. Then
for all $M > 0$, $\p(\max_{1 \leq i \leq n} |g_i| > (1+M) \sigma \sqrt{2 \log n}) \leq 2n^{-M^2}$. 
\end{lemma}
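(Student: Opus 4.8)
The plan is to combine the elementary Gaussian tail bound with a union bound over the $n$ variables; no heavy machinery is needed. First I would recall that if $g$ is a zero-mean Gaussian random variable with $\E[g^2] = \tau^2$, then $\p(g > t) \leq \exp(-t^2/(2\tau^2))$ for every $t > 0$ (the standard Chernoff estimate, obtained by optimizing $\E[\exp(\lambda g)] = \exp(\lambda^2 \tau^2/2)$ over $\lambda$), and applying this to $\pm g$ gives $\p(|g| > t) \leq 2\exp(-t^2/(2\tau^2))$. Since $\E[g_i^2] \leq \sigma^2$ for each $i$ by hypothesis and $\tau^2 \mapsto \exp(-t^2/(2\tau^2))$ is increasing, every $g_i$ obeys the uniform bound $\p(|g_i| > t) \leq 2\exp(-t^2/(2\sigma^2))$.

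Next I would apply the union bound to pass from the individual tails to the maximum: for any $t > 0$,
\[
\p\Big( \max_{1 \leq i \leq n} |g_i| > t \Big) \leq \sum_{i=1}^n \p(|g_i| > t) \leq 2n \exp\!\big(-t^2/(2\sigma^2)\big).
\]
Substituting the specific threshold $t = (1+M)\sigma\sqrt{2\log n}$ gives $t^2/(2\sigma^2) = (1+M)^2 \log n$, so the right-hand side equals $2n \cdot n^{-(1+M)^2} = 2 n^{1 - (1+M)^2} = 2 n^{-M^2 - 2M}$.

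Finally, since $M > 0$ and $n \geq 2$, we have $n^{-M^2 - 2M} \leq n^{-M^2}$, and therefore $\p\big(\max_{1 \leq i \leq n} |g_i| > (1+M)\sigma\sqrt{2\log n}\big) \leq 2 n^{-M^2}$, which is the claimed inequality. There is no real obstacle in this argument; the only (entirely cosmetic) point is the last step, where the sharper exponent $M^2 + 2M$ is relaxed to the stated $M^2$ merely for a cleaner statement.
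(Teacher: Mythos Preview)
Your proof is correct. The paper states this lemma without proof as a standard auxiliary result, and the argument you give---Gaussian tail bound plus union bound, followed by relaxing the exponent $M^2+2M$ to $M^2$---is precisely the routine derivation one would expect.
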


\begin{lemma}\label{lem:smallball} \emph{(\textbf{Small ball probability})}\\
Let $g \sim N(0,1)$. Then for all $t > 0$, $\p(|g| < t) \leq \sqrt{\frac{2}{\pi}} \, t < t$.  
\end{lemma}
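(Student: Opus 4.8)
The plan is to bound the standard normal density pointwise by its maximal value and integrate over the symmetric interval $(-t,t)$. First I would write
\[
\p(|g| < t) = \int_{-t}^{t} \frac{1}{\sqrt{2\pi}} e^{-x^2/2}\, dx,
\]
and use the elementary bound $e^{-x^2/2} \leq 1$, valid for all $x \in \R$, so that the integrand is bounded above by $\frac{1}{\sqrt{2\pi}}$ everywhere. Since the interval of integration has length $2t$, this yields
\[
\p(|g| < t) \leq \frac{2t}{\sqrt{2\pi}} = \sqrt{\frac{2}{\pi}}\, t,
\]
which is the first of the two claimed inequalities.

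The second inequality, $\sqrt{2/\pi}\, t < t$, then follows immediately from $\sqrt{2/\pi} < 1$, equivalently $2 < \pi$; this holds for every $t > 0$. Chaining the two displays gives the full statement.

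There is essentially no obstacle here: the argument is two lines and uses only that $e^{-x^2/2} \leq 1$ together with the numerical fact $2 < \pi$. The only place where one could be tempted to work harder is in refining the crude pointwise bound $e^{-x^2/2} \leq 1$ (for instance by evaluating the Gaussian integral exactly, or by replacing it with $e^{-x^2/2} \geq e^{-t^2/2}$ on $(-t,t)$ if a matching lower bound were wanted), but the lemma only asserts the linear-in-$t$ upper bound $\sqrt{2/\pi}\,t$, so nothing sharper is required.
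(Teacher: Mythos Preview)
Your proof is correct. The paper actually states this lemma without proof in its appendix of auxiliary results, so there is nothing to compare against; your argument---bounding the density by its maximum $1/\sqrt{2\pi}$ and integrating over an interval of length $2t$---is precisely the standard one-line justification for this elementary fact.
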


\end{document}